\numberwithin{equation}{section}
\apptocmd{\thebibliography}{}{}{}
\renewcommand*{\Re}{\mathop{}\!\mathrm{Re}\,} 
\theoremstyle{plain}
\newtheorem{theorem}{Theorem}[section]
\newtheorem{lemma}[theorem]{Lemma}
\newtheorem{proposition}[theorem]{Proposition}
\newtheorem{reminder*}{[theorem]Reminder}
\newtheorem{details*}[theorem]{Details}
\newtheorem{comm*}{Comment}
\newtheorem{definition}[theorem]{Definition} 
\newtheorem{definition*}{[theorem]Definition}
\newtheorem{notation*}{Notation}
\newtheorem{remark}[theorem]{Remark}
\title{Critical lack of equilibrium in stochastic kinetic proofreading}
  \author{E. Franco, J. J. L. Velázquez}
\begin{document}

\maketitle

\begin{abstract}
    In this paper we study the relation between the property of detailed balance and the ability of discriminating between different ligands for a class of stochastic models of kinetic proofreading. 
    We prove the existence of a critical amount of lack of detailed balance that the kinetic proofreading models must have in order to have strong specificity for a value of the binding energy $\sigma$. We also prove that the fact that a kinetic proofreading model has a lack of detailed balance that is larger than the critical one does not necessarily yield strong discrimination properties. 
    Indeed, there exist different sets of chemical rates, leading to the same amount of lack of detailed balance, that have strong discrimination property in some cases and not in others. 
\end{abstract}

\textbf{Keywords:} detailed balance, criticality, kinetic proofreading, matched asymptotic expansions, Laplace transforms. 
\tableofcontents

\section{Introduction}
Kinetic proofreading models are chemical reaction networks that can distinguish between ligands leading to a correct output and ligands leading to a wrong output.
A family of kinetic proofreading models were introduced by Hopfield and Ninio in the 70's (see \cite{hopfield1974kinetic,Ninio}). Generalizations of this model, including inhibition effects,  feedbacks and rebindings, have been studied by several authors (see for instance \cite{chan2004feedback,dushek2009role,franccois2016case,franccois2013phenotypic,govern2010fast,rendall2017multiple,sontag2001structure}).

It was noticed in \cite{hopfield1974kinetic} that the kinetic proofreading networks must work in out of equilibrium conditions in order to perform their function. The kinetic proofreading model analysed in \cite{hopfield1974kinetic} is a deterministic linear chemical reaction network modelled by a system of ODEs. Computing the flux solutions of this system, it is possible to observe that the specificity to detect ligands can be much higher in systems that are out of equilibrium compared to the optimal specificity that can be achieved in equilibrium conditions. 
The relation between the dissipation of free energy and the specificity properties of ODEs models of kinetic proofreading systems has been analysed in several papers in the biophysics literature (for instance in \cite{bennett1979dissipation,pineros2020kinetic,wegscheider1902simultane,yu2022energy}).

In this paper we study a probabilistic version of the classical deterministic linear model of kinetic proofreading proposed by Hopfield. 
The ligands are assumed to be characterized by their binding energy $\sigma$ with the receptor.
We assume that when ligands bind to a receptor they form a complex that can be at different phosphorylation states. In particular we assume that $N >1$ is the number of states. Once the complex is formed the ligand can detach from the receptor, with a rate that depends on its binding energy $\sigma$ with the receptor, or it can either gain a phosphate group (i.e. a phosphorylation event takes place), or it loses an inorganic phosphate group (i.e. a dephosphorylation event takes place). 
If the complex ligand receptor reaches the phosphorylation state $N$, then it produces an output. 
The chemical reaction network that we consider can be summarized as follows
\begin{center}
\begin{tikzcd}
&   & &  \ \ \ \ \ S \arrow[dll,swap]\arrow[dl,swap]\arrow[drr,swap]\arrow[dr,swap] \ \ \ \ \ &   &   \\
& C_0 \arrow[urr,swap]  \arrow[r, swap] &  C_1  \arrow[ur,  swap] \arrow[r, swap] \arrow[l, swap]  & \ldots \ldots \arrow[r, swap] \arrow[l, swap]   & C_{N-1} \arrow[ul] \arrow[l, swap]  \arrow[r, swap] \arrow[l, swap]   & C_{N} \arrow[ull, swap  ] \arrow[r, swap] &  output
\end{tikzcd}
\end{center}
here $S$ represents the ligands and $\{ C_k \}_{k=0}^N $ are the complexes ligand receptor at different phosphorylation states. 

In this paper we are interested in applications of the kinetic proofreading mechanism to recognition processes of the immune system. 
There is experimental evidence that low densities of foreign antigens are able to trigger an immune response (e.g. \cite{altan2005modeling}). Therefore, in this paper we consider a stochastic version of the classical deterministic kinetic proofreading model that allows to study the probability that one ligand induces a response of the immune system. 
Other stochastic models of kinetic proofreading have been studied in \cite{currie2012stochastic,kirby2024leisure,kirby2023proofreading, munsky2009specificity,xiao2024leisure}.
However, the question that we address in this paper, i.e. whether strong specificity can take place in equilibrium systems (or in systems that are close to equilibrium in a suitable sense that will be clarified later) is not studied in these papers. 

In the context of immune recognition processes, a ligand is a complex, pMHC, made of a peptide attached on a major histocompatibility complex (MHC). The receptors are the receptors of the $T$-cells and the output corresponds to the response of the immune system (we refer to \cite{janeway2001immunobiology} for details on the role of kinetic proofreading mechanisms in the immune system).
The goal of the kinetic proofreading mechanism is to discriminate between self-ligands, that should not induce an immune response, and foreign-ligands, corresponding for instance to pathogens, that are expected to produce an immune response. 

The chemical reaction network that we consider in this paper is a generalization of the model that we analysed in \cite{franco2025stochastic}. 
In that paper we assume that most of the chemical reactions are one-directional. In particular we assume that dephosphorylation does not occur and that the attachment of a ligand with a receptor always forms a dephosphorylated complex.
In the model that we study in this paper, instead, all the reactions, except for the reaction which produces the output, are assumed to  be bidirectional. 
This generalization allows us to study the role of the lack of detailed balance for the linear kinetic proofreading networks proposed by Hopfield. In Subsection \ref{sec:Delta to infty} we will also show that the model studied in \cite{franco2025stochastic} can be obtained as a limit case of the models studied in this paper.

The property of detailed balance in chemical reaction networks states that at steady state every chemical reaction is balanced by its reverse reaction. 
In particular this implies that, at steady state, there are no fluxes of chemicals in the system. 
We stress that the property of detailed balance must hold in every chemical reaction network at constant temperature that does not exchange substances with the environment. 
However, many biological systems, exchange energy with the environment and can be modelled by effective systems for which the property of detailed balance fails (see \cite{franco2025detailed}). 

The model of kinetic proofreading that we study in this paper does not satisfy the detailed balance property as it is an effective model that describes a chemical network that is in contact with reservoirs of ATP, ADP and phosphate groups that we denote with $P$, which are out of equilibrium. 
In Section \ref{sec:completion} we introduce a model of kinetic proofreading in which we explicitly take into account of the role of ATP, ADP and phosphate groups in the chemical reactions. This model satisfies the property of detailed balance. We then show that if we assume that the concentration of ATP, ADP and phosphate groups is kept at constant non equilibrium values by means of an active mechanism, then we recover the kinetic proofreading model without detailed balance that we study in this paper. 
Notice that here we do not try to model the active mechanisms that keep the concentration of ATP, ADP and phosphate groups out of equilibrium. 

In the previous paper \cite{franco2025stochastic} we studied a class of kinetic proofreading models, simpler than the ones considered in this paper, for which we prove that they have strong discrimination properties (strong specificity).
In this paper we say that a kinetic proofreading network has strong discrimination properties if it is able to distinguish with low error rate between ligands with differences of binding energies that are of order $1/N$, where we recall that $N $ is the number of kinetic proofreading steps (i.e. the number of phosphorylation states). 
One of the goals of this paper is to prove that a minimal amount of lack of detailed balance is necessary in order to obtain this strong discrimination property. We consider a class of linear kinetic proofreading networks of the same type as the network introduced in \cite{hopfield1974kinetic}.
We prove that there exists a critical lack of detailed balance that the system must have in order to have strong specificity.

We now explain how we measure in a quantitative manner the lack of detailed balance. 
It is well known that the fact that the detailed balance property holds or not depends on the chemical rates of the reactions that belong to the cycles of the network. Indeed the Wegscheider criterium (see \cite{wegscheider1902simultane}) guarantees that the property of detailed balance holds in a chemical reaction network if and only if 
\begin{equation} \label{weg condition}
\prod_{ R \in \omega } \frac{ K_R }{ K_{- R }} =1, 
\end{equation}
for every cycle $\omega$ in the chemical network. 
Here a cycle is a set of chemical reactions that when applied to a vector of concentrations has a null net effect. 
In \eqref{weg condition} we are denoting with $K_R $ the rate of the reaction $R$ and with $K_{-R }$ the rate of $-R$, the reverse of the reaction $R$.

Notice that the set of the cycles of a  chemical reaction network has the structure of Abelian group. For the specific  kinetic proofreading network that we study in this paper it is possible to define a basis of cycles that contains three reactions. Indeed we consider the basis  of cycles $\{ \omega_k \}_{k=1}^N$, where for every $k $ we have that the cycle $\omega _k=[R_1^{(k)}, R_2^{(k)}, R_3^{(k)}]$ has the form 
\[
S \overset{R_1^{(k)}} \longrightarrow C_k  \overset{R_2^{(k)}}\longrightarrow C_{k+1} \overset{R_3^{(k)}}\longrightarrow S.
\]
A convenient way to measure the lack of detailed balance of the kinetic proofreading network is to introduce the parameter $\Delta_k$, associated to each cycle $\omega_k$,  defined as 
\begin{equation} \label{delta in cylces}
e^{\Delta_{k} } := \frac{ K_{R_1^{(k)}}K_{R_2^{(k)}} K_{R_3^{(k)}}  }{ K_{- R_1^{(k)} }K_{- R_2^{(k)} }K_{- R_3^{(k)} }}. 
\end{equation}
Notice that if $\Delta_{k} =0$ for every $k=1, \dots , N $, then the detailed balance property holds. We assume that every cycle has the same lack of detailed balance, i.e. for every $k $ it holds that $\Delta_k = \Delta $. 
We will use the parameter $\Delta$ as a measure of the lack of detailed balance of the kinetic proofreading model. 

We prove that the class of kinetic proofreading models studied in this paper can exhibit strong discrimination properties only if $\Delta > \Delta_c$ where $\Delta_c$ depends on the parameters of the model, i.e. on the  phosphorylation rates, the energies of the substances in the network and the binding energy $\sigma$ of the ligand with the receptor.
Hence, the fact that the chemical rates satisfy the condition $\Delta > \Delta_c$ is a necessary condition in order to have strong discrimination properties. 
Since the value of $\Delta$ measures the lack of detailed balance (lack of equilibrium) one could naively think that this value could characterize the behaviour of the system. In other words one could think that the condition $\Delta > \Delta_c $ is sufficient to have strong discrimination properties. It turns out that this is not the case. In Section \ref{sec:alterantive models no db and no disc} we show that some choices of chemical rates $\{ K_{R_j }  \} $ yielding lack of detailed balance, and specifically satisfying $\Delta > \Delta_c $, produce strong discrimination properties while other choices of chemical rates, that give the same value of $\Delta$, do not.  
Therefore, the fact that the network does not satisfy the detailed balance property, is not a sufficient condition for strong specificity, even if $\Delta $ is very large.  Indeed, it turns out that the details of the chemical reactions rates are also crucial.

We already anticipated that we say that a chemical reaction network has strong discrimination properties if it can distinguish with low error rates ligands that are characterized by binding energies whose difference is of order $1/N $. 
We clarify now what we mean with that in this paper. 
 We prove that the characteristic time at which a complex ligand-receptor reaches the state $C_{N}$ and therefore produces a response scales like 
 \[
 T_N \sim \exp\left( \lambda_\Delta  (\sigma, E) N \right) \text{ as } N \to \infty, 
 \]
 where $\lambda_\Delta (\sigma, E) $ is a function of $\sigma, E $ and $\Delta $. 
Suppose that the ligands have a characteristic life time that satisfies 
\begin{equation}\label{tlife}
 T_{life} \sim \exp\left( b N \right)  \text{ as } N \to \infty, 
\end{equation}
for a positive $b$. 
Then, the probability $p_{res}(\sigma)$  that one ligand produces a response satisfies
\[
p_{res} (\sigma) \sim \frac{e^{-\lambda_\Delta (\sigma, E) N}}{e^{-\lambda_\Delta (\sigma, E) N } + e^{-b N  }} \text{ as } N \to \infty 
\] 
 and will be close to $1 $ when $ b - \lambda_\Delta (\sigma, E)  \gtrsim 1/N $ and close to $0$ when  $  \lambda_\Delta (\sigma, E) - b  \gtrsim 1/N $.

In this paper we prove that for any binding energy $\sigma_c \in \mathbb R$ there exists a critical amount of detailed balance $\Delta_c(\sigma_c) $ that has the following property. If $\Delta > \Delta_c(\sigma_c) $ then we can find a parameter $b \in (0, E) $ such that $
\lambda_\Delta (\sigma_c, E) =b< E. $
Around the parameter $\sigma_c$ we have an abrupt transition from response to non-response. 
Indeed, if $\sigma - \sigma_c \gtrsim \frac{1}{N} $ we have that $p_{res} (\sigma) \approx 0 $ while if $ \sigma_c -  \sigma  \gtrsim \frac{1}{N} $, then  $p_{res} ( \sigma) \approx 1 $.  
The region of transition from response to non-response, where we have that $p_{res} ( \sigma ) \in (0,1) $, is of order $1/N$ and is the region where we have 
\[
| \sigma - \sigma_c | \lesssim \frac{1}{N} . 
\] 
 Hence the accuracy of the discrimination is of order $1/N$.
 If instead we have that $\Delta < \Delta_c (\sigma_c) $ it is not possible to find a $b $ such that $\lambda_\Delta(\sigma_c , E) = b $. 
 Hence, in this case, we do not have a sharp transition from non response to response around a critical binding energy separating the energies leading to a response from the energies that do not lead to a response. In this case the network does not have strong discrimination properties.

Notice that in this paper we assume that $T_{life} $ satisfies \eqref{tlife}.
However, also if $ T_{life} \lesssim N  $ it would be possible to study the discrimination properties of the kinetic proofreading model.
Also in this case we would have some discrimination properties, meaning that higher $\sigma $ implies a smaller probability of response. However in that setting we would not obtain the existence of a sharp transition from $1$ to $0$ for the probability of response around a critical value of $\sigma$. 

An interpretation of the critical behaviour found in this paper is the following. In the subcritical regime the response is achieved by means of a direct jump from the signal $S$ to the final state $N$. Due to this fact, there is no dependence on $\sigma $ in the  probability of response as $N \to \infty $. Instead in the supercritical regime the response is achieved through a transition of the system along the phosphorylation chain. In the second case the process takes place faster due to the fact that $\lambda_\Delta(\sigma, E) $ is smaller than $E$. Notice that the direct transition from $S$ to $N$ is very unlikely and is expected to take place in time scales of order $e^{EN}$. However, in the supercritical regime the process is faster and takes place at the time scale $e^{\lambda_\Delta(\sigma, E) N }$, that is much shorter than $e^{EN}$ for $N $ large.

\subsection{Notation}
In this paper we use the notation $\mathbb R_* = (0, \infty ) $ and $\mathbb N_0 = \mathbb N \setminus \{ 0\} $. 
Moreover we use the notation $ f \sim  g $ as $N \to \infty $ to indicate that the two functions $f$ and  $g $ are asymptotically equivalent, i.e. $\lim_{N \to \infty} \frac{f(N)}{g(N)} =1 $. We use the notation $f \approx g $ to indicate that $f$ and $g$ are roughly of the same order. We say that $f \gg g $ as $N \to \infty $ if it holds that $\lim_{N \to \infty } \frac{g(N)}{f(N)}=0$. 
Finally we use $ f \gtrsim g $ to say that there exists a positive constant $c>0$ such that $f \geq c g $. 

\section{Main result of the paper}
In this section we present the model that we study in this paper and the main results that we prove. 
\subsection{The model} \label{sec:model}
We now explain the assumptions on the chemical rates that we make in this paper. 
The particular choice of parameters that we consider allows us to define a chemical reaction network in which $\Delta \geq 0$ measures the lack of detailed balance. 
To define the chemical reaction rates we start associating to each substance in the network an energy. Hence we define a vector of energies $ \overline E=(E_S, E_0, E_1, \dots, E_{N-1}, E_N) \in \mathbb R^{N+2} $.
We assume that the attachment of a ligand to the receptor, i.e. the reactions of the form 
\[
R_1^{(k)} : S \rightarrow  C_k
\] 
take place at rate $K_{R_1^{(k)}} =e^{- E_k } e^{\sigma}$ for every $k \in \{ 0, \dots , N \} $. 
The detachment rate depends on the binding energy between the ligand and the receptor, i.e. the reactions of the form
\[
-R_1^{(k)}:  C_{k} \rightarrow S
\] 
take place at rate $K_{- R_1^{(k)}} = e^{\sigma} $ for every $k \in \{0, \dots, N \} $ where $\sigma \in \mathbb R$. 
The reactions
\[ 
R^{(k)}_2: C_k \rightarrow C_{k+1} \]
take place at rate $K_{R_2^{(k)}}= \alpha e^{\Delta}$ for every $k \in \{ 0, \dots , N \} $. 
On the other hand we assume that the reaction 
\[
-R^{(k)}_2:
C_k \rightarrow C_{k-1} \]
takes place at rate $K_{-R^{(k)}_2}=\alpha e^{E_k- E_{k-1} }$ for every $k \in \{0, \dots , N \} $.

In order to simplify the model we make the following assumptions
\begin{enumerate}
    \item $E_S =0$ and $E_0=\sigma $ and $E_k = \sigma + k E $ for every $ k \in \{ 1, \dots, N\} $ for $E>0$. As a consequence the rate of the reactions $(k) \rightarrow (k-1) $ is $\alpha e^{E} $. 
    \item The ligand is degraded at rate $\mu $ independently on its state. 
    Moreover we assume that the degradation rate $\mu $ is such that 
\begin{equation}\label{degradation}
\mu \sim e^{- b N } \text{ as } N \to \infty
\end{equation}
where $b >0$. 
\end{enumerate}
These assumptions guarantee that for every cycle
 $\omega _k=[R_1^{(k)}, R_2^{(k)}, -R_1^{(k+1)}]$ of the form 
\[
S \overset{R_1^{(k)}} \longrightarrow C_k  \overset{R_2^{(k)}}\longrightarrow C_{k+1}  \overset{- R_1^{(k+1)}}\longrightarrow S
\]
it holds that 
\begin{equation} \label{lack of db}
e^{\Delta} = \frac{ K_{R_1^{(k)}}K_{R_2^{(k)}} K_{-R_1^{(k+1)}}  }{ K_{- R_1^{(k)} }K_{- R_2^{(k)} }K_{ R_1^{(+1k)} }}. 
\end{equation}
Hence when $\Delta \neq 0$, the detailed balance property does not hold. 
We stress that we are assuming here that the only reactions that are affected by the parameter $\Delta $ are the phosphorylation reactions $(k ) \rightarrow (k+1) $. The fact that detailed balance fails along these reactions implies that the rate $\alpha e^\Delta$ of the phosphorylation reactions is larger than the rate that we have when detailed balance holds, i.e. $\alpha $. In other words, the lack of detailed balance accelerates the phosphorylation reactions.  

Let $n_k (t) $ be the probability  that a ligand, characterized by the binding energy $\sigma$, reaches the state $k \in \{ 0, \dots , N\} $ in the time interval $(0, t ] $ and let $n_S (t) $ be the probability that it reaches the state $S$ in the time interval $(0, t ] $. 
The dynamics of the vector of the probabilities $n(t)=(n_S(t), n_0(t), n_1(t), \dots , n_N(t) )^T \in \mathbb R_*^{N+1}$ is described by the following system of ODEs 
\begin{align}\label{ODEs}
    \frac{d n_S }{dt} &= - n_S  - S_N(E) n_S + e^{\sigma }  \sum_{k=0}^{N} n_k - \mu n_S \nonumber \\
    \frac{d n_0 }{dt} &= n_S  - e^{\sigma } n_0 + \alpha e^E n_1 - \alpha e^{\Delta} n_0 - \mu n_0 \nonumber \\
        \frac{d n_k }{dt} &= n_S e^{- k E} - \left( e^{\sigma } + \alpha e^E + \alpha e^\Delta  \right)  n_k + \alpha e^E n_{k+1}  + \alpha e^{\Delta} n_{k-1} - \mu n_k  \\
         \frac{d n_{N} }{dt} &= n_S e^{- N E} - \left( e^{\sigma } + \alpha e^E + \alpha e^\Delta  \right)  n_{N} + \alpha e^{\Delta} n_{N-1} - \mu n_{N} \nonumber  
\end{align}
with initial datum $n(0)=(1, 0, \dots, 0)$ and 
where 
\[
S_N(E):=\sum_{k=1}^{N} e^{- k E } = \frac{1-e^{-NE}}{e^E-1 } . 
\]
The fact that we are considering as initial datum $n(0)=(1, 0, \dots, 0)$ means that at time $t=0$ the only substances present in the network are the free ligands. 
The probability that a ligand, characterised by the binding energy $\sigma$, produces a response in the time interval $(0,t]$, denoted with $R(t)$, is given by 
\[ 
R(t) := \alpha e^\Delta n_N(t). 
\]
The probability $p_{deg} (t)$ that a ligand is degraded in the time interval $(0, t] $ is 
\[
p_{deg} (t)  := - \mu M(t)  
\]
where $M(t)$ can be interpreted as the probability that the ligand is the network (either in a complex or as a free ligand) in the time interval $(0, t] $ 
\begin{equation} \label{def M}
M(t):=n_S(t) +\sum_{k=0}^{N} n_k (t) .
\end{equation}
Later we refer to $M $ as the total mass of ligands in the network. 
According to the system of equations introduced above we have that $M$ is decreasing in time. More precisely we have that 
\begin{equation} \label{mass}
\frac{d M }{ dt } = - \alpha e^\Delta n_{N} - \mu M . 
\end{equation}
Finally notice that, as expected, we have that
\[ 
p_{deg}(t) + R(t) + M(t)=1, \quad  \forall t \geq 0. 
\]
In this paper we analyse the probability  $p_{res}(\sigma)$ that one ligand characterized by its binding energy $\sigma $ with the receptor produce a responses, i.e.   
\begin{equation} \label{prob response}
p_{res} (\sigma ) := \int_0^\infty R(t) dt =  \alpha e^\Delta \int_0^\infty n_{N} (t) dt
\end{equation} as the number of kinetic proofreading steps $N \to \infty $.

\subsection{Strong discrimination properties for $\Delta > \Delta_c$ }
The goal of the paper is to analyse the relation between the lack of detailed balance and strong discrimination properties of kinetic proofreading mechanisms. We state here  precisely our definition of strong discrimination. 
\begin{definition}[Strong discrimination] \label{def:strong discr}
Let $\Delta, \sigma, b, E , \alpha  $ be positive constants. 
    The kinetic proofreading network described by the system of ODEs \eqref{ODEs} has strong discrimination properties with respect to the energy $\sigma$  if 
\begin{equation} \label{strong disc}
\lim_{N \to \infty}    \frac{1}{N } \log \left( \frac{1}{p_{res} (\sigma) } - 1 \right) = \lambda_\Delta (\sigma, E) - b 
\end{equation}
    where  $ \lambda_\Delta (\sigma, E )$ is a parameter that depends non trivially on  $\sigma $  and there exists a solution $\sigma_c $ to 
    \begin{equation} \label{sigma cirt}
\lambda_\Delta(\sigma_c, E)= b. 
\end{equation} 
\end{definition}
In this paper we prove that, for the kinetic proofreading networks introduced in Section \ref{sec:model}, we have that
\begin{equation} \label{lambda}
\lambda_\Delta  (\sigma, E )=\begin{cases}  E &\text{ if } \Delta < \Delta_c \\
\psi_\Delta (\sigma, E) &\text{ if } \Delta > \Delta_c
\end{cases} 
\end{equation}
where $\psi$ is a function which depends non trivially on $\sigma$, $E$ and $\Delta$
\begin{equation} \label{psi}
\psi_\Delta(\sigma , E ) := E - \log \left[ \frac{1}{2 \alpha } \left( e^\sigma + \alpha (e^E+ e^\Delta) - \sqrt{ (e^\sigma + \alpha (e^E+ e^\Delta) )^2 - 4 \alpha^2 e^{\Delta + E }} \right) \right]
\end{equation}
and where 
\begin{equation}\label{Delta_c}
\Delta_c= \Delta_c (\sigma)  := \log \left( 1+ \frac{e^\sigma }{\alpha (e^E-1 ) } \right).  
\end{equation}
In the latter, for notational convenience, we will remove the subscript $\Delta $ from $\lambda_\Delta $ and from $\psi_\Delta $.

The fact that $\lambda(\sigma, E) $  does not depend on $\sigma $ for $\Delta $ and $\sigma$ such that $\Delta < \Delta_c(\sigma) $ and depends on $\sigma$ for $\Delta $ and $\sigma $ such that $\Delta > \Delta_c(\sigma)  $ is the most important result of this paper. It implies that the kinetic proofreading networks described by \eqref{ODEs} can have strong discrimination properties, in the sense of Definition \ref{def:strong discr}, only when $\Delta $ and $\sigma$ are such that $\Delta > \Delta_c(\sigma) $. 
\begin{figure}[H] 
\centering
\includegraphics[width=0.5\linewidth]{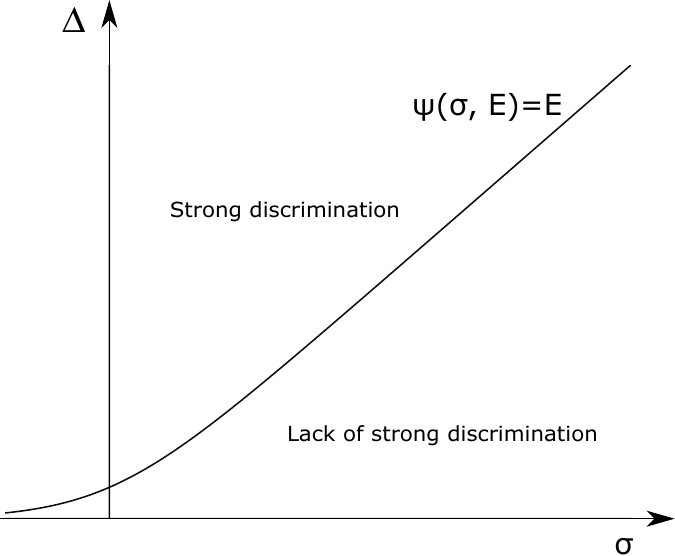}
\caption{
In this figure we plot the line $\psi (\sigma, E)=E $ for $\alpha =1 $, $E=\ln (2)$. Notice that $\psi(\sigma , E )=E $ if and only if $\Delta= \Delta_c(\sigma)$. The line $\psi (\sigma, E)=E $ separates the region in which we have strong discrimination properties from the region where we do not have strong discrimination. 
}\label{fig1}
\end{figure}

\subsection{Necessary conditions on $b$ for strong discrimination when $\Delta > \Delta_c $}
The goal of this section is to explain how to interpret the consequences of the definition of strong discrimination given in Section \ref{sec:discrimination} and in particular to introduce a condition on the parameter $b$ that is necessary in order to have that equation \eqref{sigma cirt} admits a solution, hence that the strong discrimination property holds. 

The fact that the parameter $\lambda(\sigma, E) $ depends on $\sigma $ is crucial in order to have a sharp transition of $p_{res} (\sigma) $ from $1$ to $0$ around a critical binding energy $\sigma_c$. 
Indeed \eqref{strong disc} implies that the probability $p_{res} (\sigma) $ that one ligand characterized by the binding energy $\sigma $ satisfies 
\begin{equation} \label{eq:prob response}
p_{res} (\sigma)   \sim \left( 1 + Ce^{N(\lambda(\sigma, E)- b)}\right)^{-1} \text{ as } N \to \infty
\end{equation}
where $C$ is a positive constant that depends on the parameters. When $\Delta< \Delta_c(\sigma) $ we have that $\lambda (\sigma , E )=E $.
As a consequence, if $\lambda(\sigma, E )=E < b $, then $\lim_{N\to \infty } p_{res} (\sigma ) =1 $, while when $\lambda(\sigma , E )=E > b $, then $\lim_{N\to \infty } p_{res} (\sigma ) =0 $. 
As will be explained in detail in Section \ref{sec:Delta to infty}, when $\Delta \to \infty $ and $\alpha, E , \sigma $ are of order one the function $\psi $ does not depend on $ \sigma $ and we have a similar behaviour. The model in this case does not have strong discrimination properties in the sense of Definition \ref{def:strong discr}. 

Consider a binding energy $ \sigma_c \in \mathbb R $. Assume that $\Delta > \Delta_c = \Delta_c( \sigma_c) $. By the continuity of the function  $\Delta_c(\sigma) $ defined by \eqref{Delta_c} there exists a neighborhood  $U$ of $\sigma_c $ such that $\lambda (\sigma, E )$ is a strictly increasing function of $\sigma$ in $U$.
Then we can find a parameter $b>0 $ such that \eqref{sigma cirt} holds. 
Precise conditions that the parameter $b$ must satisfy are given in  \eqref{conditions on b}. In particular we must have that $b< E $. The reason is that $\lambda(\sigma_c, E)=\psi(\sigma_c, E) $ is a decreasing function of $\Delta$. Therefore, in order to have that $(\sigma_c, \Delta) $ are in the region of strong discrimination in Figure \ref{fig1} we must have $\psi(\sigma_c, E ) = b < E $.

The condition $b < E$, needed to have strong discrimination has an interpretation: the characteristic life time of the ligands $e^{bN}$ must be strictly smaller than the characteristic time to reach the state $N$ directly from the signal $S$, i.e. $e^{EN} $ as $N \to \infty $.

The fact that there exists a $\sigma_c $ such that  \eqref{sigma cirt} holds implies that there exists a region of transition from response ($p_{res}(\sigma ) =1$) to non-response ($p_{res}(\sigma ) =0$) when $\sigma $ is such that 
\[ 
|\lambda (\sigma, E ) - \lambda(\sigma_c, E ) |= |\lambda (\sigma , E) -b | \lesssim \frac{1}{N}.
\] 
Moreover we have that if $\sigma > \sigma_c $ is such that $ | \lambda (\sigma , E) - \lambda(\sigma_c, E) | \gtrsim \frac{1}{N} $, then $\lim_{N\to \infty } p_{res} (\sigma ) =0 $ while when $\sigma < \sigma_c $ is such that $| \lambda (\sigma_c , E) - \lambda(\sigma, E ) | \gtrsim \frac{1}{N} $, then $\lim_{N\to \infty } p_{res} (\sigma ) =1 $. 

In Definition \ref{def:strong discr} we say that the kinetic proofreading model has strong discrimination properties if $\lambda(\sigma, E) $ depends on $\sigma$ and when there exists a solution to \eqref{sigma cirt}, because in this case there exists a critical binding energy $\sigma_c $ which separates the binding energies that produce response from the binding energies that do not produce a response.
Moreover the size of the region of transition from response to non-response, in the space of binding energies $\sigma$, where errors in the detection of ligands can occur, is of order $1/N$, hence tends to zero as $N \to \infty $. In Figure \ref{fig1} we compare the probability of response $p_{res} (\sigma ) $ that we obtain selecting the parameters in such a way that $\Delta > \Delta_c $ with the probability of response when $\Delta < \Delta_c$.
Notice that in the latter case the probability is almost constantly equal to $0$ while in the first case we have a sharp transition from response to non response around a critical binding energy $\sigma_c$. 

Finally let us consider the limiting case in which $\Delta \to \infty$ and $\alpha, \sigma $ and $E$ are of order one. Under these assumptions we have that 
\[
 \lim_{\Delta \to \infty }   \psi(\sigma , E )=  E.  
\]
Therefore in this regime $\psi (\sigma, E) $ does not depend on $\sigma$, hence we do not have strong discrimination properties. More details on the limiting cases with $\Delta \to \infty $ will be presented in Section \ref{sec:alterantive models no db and no disc}.  
\begin{figure}[H] 
\centering
\includegraphics[width=0.6\linewidth]{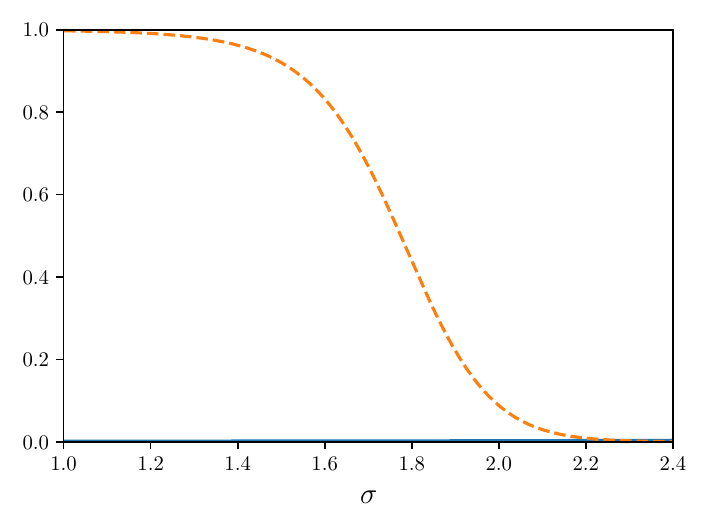}
\caption{The continuous line in blue is the probability of response $p_{resp} (\sigma) $ when $\alpha =1 $, $E=\log(3)$, $b= \log(2)$, $\Delta=1/10$ and $N=20$. In this regime we have that $\Delta$ and $\sigma $ are such that $\Delta < \Delta_c(\sigma) $, hence we are in the subcritical case and since $b < E $ we have $p_{res} \approx 0$. 
The dashed line in orange is a plot of  the probability of response when $\alpha, E, b , N $ are as before, but $\Delta=2$. 
In this regime $\Delta$ and $\sigma $ are such that $\Delta > \Delta_c(\sigma) $, hence we are in the supercritical case.
}\label{fig2}
\end{figure}

\section{Kinetic proofreading models out equilibrium} \label{sec:completion}
In this paper we prove that the class of linear kinetic proofreading models described by \eqref{ODEs} must have a sufficiently large amount of lack of detailed balance in order to have strong discrimination properties. 
However, at the fundamental level, every chemical reaction network at constant temperature and that does not exchange chemicals with the environment must satisfy the property of detailed balance (see \cite{franco2025detailed,polettini2014irreversible}). The kinetic proofreading models that we consider in this paper can be seen as  effective systems that approximate the dynamics of  larger chemical networks that exchange chemicals with the environment and where the detailed balance property holds. Hence the lack of detailed balance is justified by the fact that the system is assumed to be in contact with reservoirs of chemicals, in particular of molecules of $ATP $, $ADP $ and phosphate groups $P$. 

\subsection{Kinetic proofreading model involving molecules of ATP,  ADP and phosphate groups}
In this section we derive the kinetic proofreading model introduced in Section \ref{sec:model} starting from a chemical reaction network in which we include explicitly the molecules of $ATP$, $ADP $ and the phosphate groups. This enlarged chemical reaction network satisfies the detailed balance property and reduces to the model studied in this paper if we assume that the concentrations of  $ATP $, $ADP $ and  $P$ are kept at constant levels by an active process, for instance $ATP $ production by the mitochondria (that we do not model in detail in this paper). 
We will also  explain that the parameter $\Delta $, measuring the lack of detailed balance, depends on the values of the frozen concentrations of $ATP $, $ADP $ and $P$. 

Since we are interested in the property of detailed balance it is convenient to write the linear kinetic proofreading model described in Subsection \ref{sec:model} as a chain of cycles $ \{ \omega_k \}_{k=0}^{N} $. These cycles have the following form 
\begin{equation}\label{cycle to complete}
S \overset{e^{- k E} }{\underset{e^\sigma} \rightleftarrows} C_k  \overset{\alpha e^\Delta }{\underset{\alpha e^E }\rightleftarrows } C_{k+1} \overset{e^\sigma}{\underset{e^{- (k+1)E}} \rightleftarrows} S. 
\end{equation}
Notice that if $\Delta =0$, then we have that the Wegscheider condition \eqref{weg condition} holds for every cycle, hence the property of detailed balance holds for the chemical reaction network. 
 
In order to obtain the cycles $\{ \omega_k \}_{k=0}^N$ in which the Wegscheider criterium fails, starting from cycles for which the Wegscheider criterium holds, we consider the chemical reaction network with substances $\Omega_c = \Omega \cup \{  ATP , ADP, P, S  \} $. We assume that these substances interact via the chemical reactions $ \{ R_1^{(k)}, -R_{1}^{(k)} \}_{k=0}^{N}$, $R_2, - R_{2} $, $ \{ R_3^{(k)}, - R_{3}^{(k)} \}_{k=0}^{N}$ defined as 
\begin{equation} \label{completed} 
(k) + (ATP) \overset{R_1^{(k)}}{\underset{- R_{1}^{(k)}} \rightleftarrows}(k+1)+ (ADP), \quad (ATP) \overset{R_2}{\underset{-R_{2}} \rightleftarrows} (ADP) +(P), \quad  (k) \overset{R_3^{(k)}}{\underset{-R_{3}^{(k)}} \rightleftarrows}  (S) + k * (P), 
\end{equation}
for $  k \in  \{ 0, \dots, N\}.$ 
Here we are using the notation
\[ 
(k) *(P) = \overbrace{(P) + (P) + \dots + (P)}^{ k  \text{ times} }
\] 
We assume that the chemical rates of the reactions $ \{ R_1^{(k)},-  R_{1}^{(k)} \}_{k=0}^{N}$ are independent on $k =0, \dots N$.  
More precisely, the chemical rates of the reactions $R_1^{(k)}, R_2$ and of the reverse reactions $-R_{1}^{(k)} $ and $ -R_{2}$ are given by 
\[
K_{R_1^{(k)}}=K_1 =\alpha e^{E_T}  , \ K_{-R_1^{(k)}}=K_{-1 }= \alpha e^{ E+ E_D }, \quad K_{R_2}=e^{E_T},\  K_{-R_2} = e^{ E_D+E_P },
\] 
for some constants $E_T, E_D, E_P \in \mathbb R$. 
The chemical rates of the reactions $R_3^{(k)} $ and of the reverse reactions $R_{-3}^{(k)}$ are given by 
\[
K_{R_3^{(k)}}= K_3=e^\sigma ,\  K_{-R_3^{(k)}}= K_{-3}= e^{ k (E_P- E)},  \quad k \in \{ 0, \dots, N\}. 
\]
\paragraph{Cycles of the enlarged chemical reaction network.}
Notice that if we apply the reactions $R^{(k)}_1, R_{-2}$, $ R_{-3}^{(k)}, R_3^{(k+1)} $ to any vector of concentrations, then we will have a null effect, indeed
\[
(0,0,0,0,0, 0) \overset{R_1^{(k)}}  \rightarrow  (-1,1,-1,1,0, 0)  \overset{ R_{-2}} \rightarrow (-1,1,0,0,-1, 0)  \overset{R_{-3}^{(k)}}\rightarrow  (0,1,0,0,-(k+1), -1 )  \overset{R_{3}^{(k+1)}} \rightarrow (0,0,0,0,0, 0). 
\]
Therefore the set of the reactions $ \overline \omega_k = \{ R_1^{(k)}, R_{-2},  R_{-3}^{(k)}, R_3^{(k+1)}  \} $ for $k =0, 1, \dots, N$  are cycles of the enlarged chemical reaction network. 
Notice that by construction we have that the circuit condition \eqref{weg condition} holds along every cycle $\overline \omega_k $, indeed the rates were selected in such a way that
\[
K_1 K_{-2 } K_{- 3 }^{(k)} K_3 = K_{-1} K_{2 } K_{ 3 } K_{-3}^{(k+1)}, \quad  \forall k \in \{0, \dots, N\} .  
\]
As a consequence, this enlarged chemical reaction network satisfies the property of detailed balance. 

\paragraph{System of ODEs associated with the enlarged chemical reaction network and its conserved quantities.}
The system of ODEs associated with the chemical network that includes ATP, ADP and P is the following, 
\begin{align} \label{ode completed}
    \frac{d n_0 }{dt } &= - n_0 ( \alpha e^{E_T}  n_T + e^\sigma  ) + \alpha e^{ E+ E_D } n_{1} n_D + n_S \nonumber  \\
      \frac{d n_{k} }{dt } &= - n_{k} ( \alpha e^{E_T} n_T +  e^\sigma) + \alpha e^{ E+ E_D }n_{k+1} n_T + e^{(k+1)(E_p- E)} n_p^{k+1} n_S, \quad k \in \{ 1, \dots, N \} \nonumber \\
          \frac{d n_T }{dt } &= - n_T \left(e^{E_T} + \alpha e^{E_T} \sum_{k=0}^{N-1} n_k \right) + \alpha e^{ E+ E_D } n_D \sum_{k=1}^{N}n_{k}  + e^{ E_D+E_P } n_P n_D  \\ 
            \frac{d n_D }{dt } &= - n_D \left( e^{ E_D+E_P } n_P + \alpha e^{ E+ E_D } \sum_{k=1}^{N} n_{k} \right) + \alpha e^{E_T}  n_T \sum_{k=0}^{N-1}n_{k}  + e^{E_T} n_T  \nonumber \\ 
               \frac{d n_P }{dt } &= - e^{ E_D+E_P } n_P n_D - n_S  \sum_{k=0}^{N}  k e^{k (E_p - E)} n_P^k + e^{E_T} n_T + e^\sigma \sum_{k=0}^{N} k  n_k \nonumber \\ 
      \frac{d n_S}{dt }&= e^\sigma \sum_{k=0}^{N} n_k  - n_S  \sum_{k=0}^{N} e^{k (E_p - E)} n_P^k . \nonumber
      \end{align}

Notice that the vector $\textbf{E} \in \mathbb R^{N+4} $ defined as 
\[
\textbf{E} =(\sigma, \sigma + E, \sigma +2E, \dots ,  , \sigma + (N-1) E, \sigma + N  E, E_T , E_D, E_P , 1 )
\]
is such that $e^{ - \textbf{E}} $ a steady state for the system of equations \eqref{ode completed}. 

Moreover notice that we have some conserved quantities in our system. 
In particular we have that if $n(t)=(n_0, n_1, \dots, n_{N-1} , n_N, n_T, n_D, n_P, n_S ) \in \mathbb R^{N+ 4 } $ is the vector of the concentrations, then we have that 
\[
m_1^T n (t) = m_1^T n(0), \quad m_2^T n (t) = m_2^T n(0)
\]
where $m_1=(\overbrace{1,1,\dots, 1 ,1}^{ N \text{ times}},0,0,0,1)^T $ and $m_2 = (0,1,2, 3, \dots,N-1, N, 2,1 , 1,0 )^T $. 
In other words we have that the number of phosphate groups is constant in time, hence 
\[ 
n_T(t) +2 n_{D} (t)+ n_P (t)+ \sum_{k=0}^N k n_k(t)   = n_T (0)+2 n_{D} (0) + n_P (0) + \sum_{k=0}^N k n_k (0)
\] 
 and the number of ligands is constant in time, i.e. 
\[
\sum_{k=0}^N  n_{k} (t) + n_S  (t) = \sum_{k=0}^N  n_{k} (0) + n_S  (0). 
\] 
Notice that all the conserved quantities, i.e. all the vectors $m \in \mathbb R^{ N + 4} $ such that $m^T n (t) = m^T n(0)$, can be written as linear combinations of $m_1$ and of $m_2$.

Since the chemical reaction network \eqref{ode completed} satisfies the property of detailed balance all the steady states of \eqref{ode completed} are of the form 
\[
N = e^{- \textbf{E} + \mu_1 m_1  + \mu_2 m_2  }
\]
for suitable constants $\mu_1, \mu_2 $, see Lemma 3.10 in \cite{franco2025detailed}. 

Therefore the concentrations of $ATP$, $ADP $ and $P$ at any steady state $N \in \mathbb R_+^{N +4 } $ of \eqref{ode completed}, denoted respectively by $N_{T}, N_D, N_p $, satisfy the following equality 
\[
\frac{N_T }{N_P N_D } = e^{  E_D + E_P -E_T  }. 
\]

\subsection{Lack of detailed balance for constant non equilibrium concentration of ATP}
We now assume that the concentrations of $ATP$, $ADP $ and $P$ are constant in time. This can be justified when the chemical network \eqref{ode completed} is in contact with a reservoirs of $ATP$, $ADP $ and $P$.
Let $\overline n_T $, $\overline n_D $ and $ \overline n_P$ be the constant concentrations of $ATP$, $ADP $ and $P$ respectively. 
Let us define $\Delta $ as
 \[
\Delta = E_T - E_D - E_P + \log \left( \frac{\overline n_T }{ \overline n_P \overline n_D }  \right). 
 \] 
Since the concentrations of $ATP$, $ADP $ and $P$ are constant in time, the chemical reactions in \eqref{completed}  can be reduced to 
\begin{equation} \label{reduced chem net}
(k) \overset{\overline R_1^{(k)} }{\underset{\overline R_{-1}^{(k)} } \rightleftarrows}(k+1),  \quad  (k) \overset{ \overline R_3^{(k)} }{\underset{\overline R_{-3}^{(k)} } \rightleftarrows}  (S)  \quad k \in \{ 0, \dots, N\}
\end{equation}
where  we assume that the chemical rates $\overline{K}_1, \overline{K}_{-1}$, $\overline{K}_3 , \overline{K}_{-3}^{(k)}$ of the reactions depend on $\overline n_T,\overline n_D,\overline n_P$, more precisely 
\[
\overline{K}_1 = \alpha  e^{E_T}\overline n_T , \quad \overline K_{-1}= \alpha e^{E+ E_D }\overline n_D, \quad 
\overline K_3=  e^\sigma , \quad \overline K_{-3}^{(k)} = e^{k (E_P- E) } \overline n_P^k. 
\]
Notice that if we choose that the concentrations of $ATP$, $ADP $ and $P$ are given by 
\[ 
\overline n_P = e^{- E_P} , \quad  \overline n_D=e^{- E_D}, \quad \overline n_T = e^\Delta e^{- E_T} 
\] 
then the chemical reaction network  \eqref{reduced chem net} satisfies the property of detailed balance if and only if $\Delta = 0$. Moreover notice that, under these assumptions on the frozen concentrations, we have that  the chemical network  \eqref{reduced chem net} and the chemical reaction network \eqref{cycle to complete} coincide. 

Notice that the concentration of molecules of ATP, ADP and  phosphate groups are assumed to be constant in time because the system is in contact with reservoirs of these substances. 
Therefore we must have fluxes of ATP, ADP and P between the chemical network and the environment. 
We can rewrite the equations for the change in time of ATP, ADP and P in equation \eqref{ode completed} taking into account of these fluxes. 
The equations that we obtain are the following 
\begin{align} \label{ode completed fluxes}
    \frac{d n_T }{dt } &= - n_T \left(e^{E_T} + \alpha e^{E_T} \sum_{k=0}^{N-1} n_k \right) + \alpha e^{ E+ E_D } n_D \sum_{k=1}^{N}n_{k}  + e^{ E_D+E_P } n_P n_D +  J^{ext}_T(n)  \nonumber   \\ 
            \frac{d n_D }{dt } &= - n_D \left( e^{ E_D+E_P } n_P + \alpha e^{ E+ E_D } \sum_{k=1}^{N} n_{k} \right) + \alpha e^{E_T}  n_T \sum_{k=0}^{N-1}n_{k}  + e^{E_T} n_T  +   J^{ext}_D(n) \\
               \frac{d n_P }{dt } &= - e^{ E_D+E_P } n_P n_D - n_S  \sum_{k=0}^{N}  k e^{k (E_p - E)} n_P^k + e^{E_T} n_T + e^\sigma \sum_{k=0}^{N} k  n_k +    J^{ext}_P(n). \nonumber
\end{align}
In the equations above we have used the fact that $\overline n_P = e^{- E_P} $, $\overline n_D = e^{- E_D} $ and $\overline n_T = e^{- E_T + \Delta } $. 
Evaluating the fluxes at the steady states $N$ with  $N_T = e^{- E_T}$, $ N_D = e^{- E_D}$, $ N_P = e^{- E_P}$ that the fluxes must be given by 
\begin{align*} 
 J^{ext}_T(N)  &=  e^{\Delta } -1 + \alpha e^{- \sigma} \left( \sum_{k=
 0}^{N-1}e^{- k E} - e^\Delta  \sum_{k=
 1}^{N}e^{- k E} \right)   = \left( e^{\Delta } -1 \right) \left( 1 + \alpha e^{- \sigma} \frac{1-e^{- NE} }{ 1-e^{-E}} \right) > 0  \\
  J^{ext}_D(N)  &=   1  - e^{\Delta}  + \alpha e^{- \sigma}  e^E \sum_{k=1}^{N} e^{- k E} - e^\Delta \sum_{k=0}^{N-1} e^{- k E}  =  (1  - e^{\Delta} )\left( 1 + \alpha e^{- \sigma} \frac{1-e^{- NE} }{ 1-e^{-E}} \right)  <0 \\
   J^{ext}_P(N)  &=  1 - e^\Delta +  n_S \sum_{k=0}^{N} k e^{- k E}  -\sum_{k=0}^{N} k e^{- k E} = 1- e^\Delta <0. 
\end{align*}
The fluxes $J^{ext}_T(N) $, $J^{ext}_D(N) $, $J^{ext}_P(N) $ can be used in order to compute the energy needed in order to main the system at steady state.

\section{Matched asymptotic expansions} \label{sec:formal}
In this section we explain, using formal arguments, why the kinetic proofreading model has strong discrimination properties if and only if $\Delta > \Delta_c $.

In order to do this we study the probability of response $p_{res} (\sigma) $ defined as in \eqref{prob response} for large values of $N$. 
Therefore we analyse the behaviour of
$n_{N} $ as $N \to \infty $.
To this end we use the method of matched asymptotic expansion. More precisely we study the behaviour of $n_k$ as $N \to \infty $ in two main regions: the outer region, where $k$ is such that $N- k=m$ and $m \gg 1 $ as $N \to \infty $ and the inner region, where $k$ is such that $N- k \approx 1 $ as $N \to \infty $.
To obtain the asymptotic behaviour of $n_{N} $ as $N \to \infty $ we match the behaviours that we obtain in the inner and in the outer region. 

Notice that in this paper we study the discrimination properties of the linear kinetic proofreading model when the number of kinetic proofreading steps $N$ tends to infinity. 
The number of kinetic proofreading steps has been measured experimentally by many authors and has been estimated to be between 2 and 20 (see for instance \cite{altan2005modeling,britain2022progressive,pettmann2021discriminatory,tischer2019light}). 
Therefore the discrimination property in concrete biological situations will not be as strong as in the model analysed here. However, considering the limit as $N \to \infty $ allows to understand in detail how the kinetic proofreading mechanism performs its functions. In particular allows us to prove that a critical lack of detailed balance is needed in order to have strong discrimination properties.

Before analysing the behaviour of $n_k $ in the inner and in the outer region we study how to approximate the concentration of ligands, $n_S$ as $N \to \infty $. To this end we consider the limit as $N \to \infty$ in the equation for $n_S $ in \eqref{ODEs} and using the fact that $\mu $ satisfies \eqref{degradation} we obtain  
\[
\frac{d n_S(t) }{dt} = - \left( \frac{1}{1- e^{- E }} + e^\sigma \right) n_S(t) + e^{\sigma }  M(t)
\] 
where we recall that $M(t) $ is defined as \eqref{def M}. 

As we will show later, the concentrations $n_k $ stabilize to exponential solutions that behave as $e^{- \lambda(\sigma, E) k } $ for $k \approx N $ and $N $ large, at time scales that are of order $N $. Here $\lambda(\sigma, E) $ is positive and depends also on $\Delta$. 
We now argue heuristically to infer that the probabilities $n_k$ can be approximated by quasi stationary solutions $\tilde{n}_k$ solving 
\[
\tilde{n}_S e^{- k E} - \left( e^{\sigma } + \alpha e^E + \alpha e^\Delta  \right)  \tilde{n}_k + \alpha e^E \tilde{n}_{k+1}  + \alpha e^{\Delta} \tilde{n}_{k-1} =0,  \quad k \geq 1 
\] and such that their total mass decreases in time scales $t $ larger than $N$. 
Indeed, assume that $k=Nx $ and that $g(t, x)=\frac{ n_{k }(t) }{\tilde{n}_k}$. Then, in order to obtain the time scale for the stabilization of $n_k $ we approximate the discrete equation in \eqref{ODEs} with the following PDE
\begin{align*}
\partial_t g(t, x)  & \approx\frac{n_{S}(t)}{\tilde{n}_{Nx}} e^{-NxE}  -  \alpha (e^E- e^\Delta) \frac{1}{N} \partial_x g(t,x)  \text{ as } N \to \infty.
\end{align*}

Here we are using the rough approximation $ \frac{\tilde{n}_{k} }{n_k } \approx 1 $ in order to approximate the incremental quotient by derivatives. Notice that the time scales at which the characteristic curves of this equation stabilize for $t \approx N $. Moreover it stabilizes to  $\frac{n_{S}(t)}{\tilde{n}_{Nx}} e^{-NxE} $. 
In Section \ref{sec:PDEs} we explain in detail under which conditions the discrete system of ODEs \eqref{ODEs} can be approximated by means of PDEs. 
A more detailed analysis of the time dependent discrete  equation \eqref{ODEs} gives the same order of magnitude for the stabilization time. In Section \ref{sec:rigorous} we obtain rigorously that the stabilization of  $n_k e^{ -k \lambda (\sigma, E) }$ takes place for times of order $N$.

In particular the equation for $M$, i.e. \eqref{mass}, implies that the variation of mass occurs at time scales of order $e^{ c  N } $ for a suitable positive constant $c>0 $ that depends on $E$, $\Delta$ and on $\sigma $. Notice also that $n_S$ stabilizes in times $t$ of order $1$. Therefore in order to obtain the variation of $ (n_S, n_k)_{k=0}^N $ in times scales of order $N $ we can use a quasi steady state approximation to replace $(n_S, n_k )_{k=0}^N $ by a steady state having mass $M (t)$ and to use the equation for $n_N $ in \eqref{ODEs} in order to compute the change of the mass in exponentially long time scales.   
Later we will show that these assumptions are consistent with the results that we obtain via this heuristic argument, see \eqref{consistency}.

In particular, the quasi steady state approximation for $n_S $ implies that $n_S \approx \tilde{n}_S $ where 
\[
\tilde{n}_S(t) \sim  \overline n_S  M(t) \text{ as } N \to \infty
\]
where 
\begin{equation} \label{overline nS}
 \overline n_S = \frac{e^\sigma }{ e^\sigma + \frac{e^E}{e^E-1}}.
\end{equation}

\subsection{Outer region: $N - k  \gg 1 $ as $N \to \infty$.}
In this section, we study the behaviour of $n_k$ when $N \to \infty$ for $k \approx 1$ such that $ k = N-m$  with $m \to \infty $ as $N \to \infty$. 
Since in this region we have that $k \ll N $, we assume that the dynamics  is given by the system of equations obtained taking the limit as $N \to \infty $ in \eqref{ODEs}, i.e. 
\begin{align} \label{ODE half line}
    \frac{d n_0^{(out)} }{dt} &= n_S  - e^{\sigma } n_0^{(out)} + \alpha e^E n_1^{(out)} - \alpha e^{\Delta} n_0^{(out)} \\
        \frac{d n_k^{(out)} }{dt} &= n_S e^{- k E} - \left( e^{\sigma } + \alpha e^E + \alpha e^\Delta  \right)  n_k^{(out)} + \alpha e^E n_{k+1}^{(out)}  + \alpha e^{\Delta} n_{k-1}^{(out)}, \quad k \in \mathbb N_0.   \nonumber
\end{align}
The initial datum for equation \eqref{ODE half line} is the same initial datum that we have for the original system of ODEs \eqref{ODEs}, i.e. we have $n^{(out)}(0)=(1, 0, \dots, 0)$. Finally notice also that 
\[
\frac{d}{dt} M^{(out)}(t)=\frac{d}{dt} n_S(t) +  \frac{d}{dt} \sum_{k=0}^\infty n_k^{(out)}(t) = 0. 
\]

 As before, we assume that $n_S$ and $ \{ n^{(out)}_k \}_{ k=0}^\infty $ can be approximated by quasi stationary solutions $\tilde{n}_S$ and $ \{ \tilde{n}^{(out)}_k \}_{ k=0}^\infty $. 
Later we will see that this assumption is consistent with our results. 
Therefore we look for solutions to the following system of equations
\begin{align} \label{ODE half line stationary}
   0 &= \tilde{n}_S - e^{\sigma } \tilde{n}_0^{(out)} + \alpha e^E \tilde{n}_1^{(out)} - \alpha e^{\Delta} \tilde{n}_0^{(out)}  \\
        0 &= \tilde{n}_S e^{- k E} - \left( e^{\sigma } + \alpha e^E + \alpha e^\Delta  \right)  \tilde{n}_k^{(out)} + \alpha e^E n_{k+1}^{(out)}  + \alpha e^{\Delta} \tilde{n}_{k-1}^{(out)}, \quad k \in \mathbb N_0. \label{ODE half line stationary2}
\end{align}
In order to study the solutions to this equation we consider separately the case of $\Delta = \Delta_c $ and $\Delta \neq \Delta_c$. 

As a first step we compute $\tilde{n}_k^{(out)}$ for $\Delta \neq \Delta_c$.
Then we will see that, as $N \to \infty$, we have different behaviours for $\tilde{n}_k^{(out)}$ depending on whether $\Delta> \Delta_c$ or $\Delta < \Delta_c $. 
For completeness we also study the case $\Delta = \Delta_c$.
\paragraph{Explicit solution to \eqref{ODE half line stationary} in the subcritical and supercritical case $\Delta \neq \Delta_c $.}
Notice that the solution to equation \eqref{ODE half line stationary} can be written as a linear combination of a particular solution and of the homogeneous solutions. Here with homogeneous solutions we mean solutions to equation \eqref{ODE half line stationary}-\eqref{ODE half line stationary2} where $\tilde{n}_S=0$, i.e. 
\begin{align} \label{ODE half line stationary homogeneous}
   0 &= - e^{\sigma } \tilde{n}_0^{(out)} + \alpha e^E \tilde{n}_1^{(out)} - \alpha e^{\Delta} \tilde{n}_0^{(out)} \\
        0 &= - \left( e^{\sigma } + \alpha e^E + \alpha e^\Delta  \right)  \tilde{n}_k^{(out)} + \alpha e^E \tilde{n}_{k+1}^{(out)}  + \alpha e^{\Delta} \tilde{n}_{k-1}^{(out)}, \quad k \in \mathbb N_0. \nonumber
\end{align}
It is easy to see that a particular solution to \eqref{ODE half line stationary2} is 
\[ 
n_k^{part} = e^{- k E } A(0)  \overline n_S M(t) = e^{- k E } A(0) \tilde{n}_S(t) , \quad  k \geq 1 
\] 
where the constant $A(0)$ is given by 
\[
A(0):= \frac{1}{e^\sigma - \alpha (e^\Delta-1) ( e^E-1 )}. 
\]
Notice that since we have that $\Delta \neq \Delta_c $, then we have that $e^\sigma - \alpha (e^\Delta-1) ( e^E-1 ) \neq 0.$

We now look for solutions to \eqref{ODE half line stationary homogeneous} of the form $n_k^{hom} = \theta(0)^k$. Substituting $n_k^{hom} $ in equation \eqref{ODE half line stationary homogeneous} we deduce that $\theta(0) $ must satisfy the following equation 
\begin{equation}\label{second order eq}
\theta^2 (0)- \frac{\Omega(0)}{\alpha e^E} \theta (0) + e^{\Delta - E} =0, 
\end{equation}
where 
\[
\Omega (0):= e^\sigma + \alpha (e^E + e^\Delta).
\] 
As a consequence we have that the solutions $\theta_{1}(0)$ and $\theta_2(0)$ to the equation \eqref{second order eq} are
\[
\theta_{12}(0)= \frac{1}{2 \alpha e^E } \left(  \Omega(0) \pm  \sqrt{\Omega(0)^2 - 4 \alpha^2  e^{\Delta + E} } \right). 
\]
Now notice that 
\[
\theta_1 (0)= \frac{1}{2 \alpha e^E } \left(  \Omega(0) -  \sqrt{\Omega(0)^2 - 4 \alpha^2  e^{\Delta + E} } \right) < 1 \text{ while } \theta_2 (0)= \frac{1}{2 \alpha e^E } \left(  \Omega(0) +  \sqrt{\Omega(0)^2 - 4 \alpha^2  e^{\Delta + E} } \right) > 1. 
\]
Indeed we have that the function $f(\theta)=\theta^2- \frac{\Omega(0)}{\alpha e^E} \theta  + e^{\Delta - E}$ is such that $f(1)=- e^\sigma< 0$. 
Since we have that $M^{(out)} (t) $ is finite, then the only admissible solution is $\theta_1(0)$, indeed $\theta_2^k(0) \rightarrow \infty $ as $k \to \infty $. 

We deduce that
\[
 \tilde{n}_k^{(out)}(t) = e^{- k E } A(0)  \overline n_S M(t)  + F(t) \theta_1^k(0),\  k \in \mathbb N. 
\]
where $F$ is a suitable function that varies slowly in time, as a consequence of the fact that, as explained at the beginning of Section \ref{sec:formal}, the time scale at which the total mass of chemicals in the system starts to decrease is exponential in $N$. 
In order to obtain the function $F$ we notice that the formula above implies that
\[
 \tilde{n}_0^{(out)}(t) =  A(0)  \tilde{n}_S(t)    + F(t),  \quad t>0. 
\]
On the other hand, equation \eqref{ODE half line stationary} together with the fact $  \tilde{n}_1^{(out)}(t) =e^{- E } A(0)    \tilde{n}_S(t)  + F(t) \theta_1(0)$  implies that
\begin{align*}
 \tilde{n}_0^{(out) } (t) &= \frac{ A (0)  \tilde{n}_S(t)+ \alpha e^E  \tilde{n}_1^{(out)}(t) }{e^\sigma + \alpha e^\Delta } = \frac{ A(0)  \tilde{n}_S(t) + \alpha e^E  \tilde{n}_1^{(out)} (t)}{e^\sigma + \alpha e^\Delta } \\
&= \frac{ \tilde{n}_S(t) A(0) + \alpha A(0)  \tilde{n}_S(t) + \alpha e^E F(t) \theta_1(0)  }{e^\sigma + \alpha e^\Delta }. 
\end{align*}
We deduce that
\[
F(t)= \frac{A(0)   \tilde{n}_S(t) (1+\alpha - e^\sigma - \alpha e^\Delta )  }{ e^\sigma + \alpha e^\Delta - \alpha \varphi (\sigma )}
\]
where  we have that 
\begin{equation} \label{varphi}
\varphi(\sigma): = \frac{1}{2 \alpha } \left( \Omega(0)  - \sqrt{ \Omega(0)^2- 4 \alpha^2 e^{\Delta + E }} \right). 
\end{equation}

Summarizing, we obtain the following approximation for $n_k^{(out)} $ valid when $t \approx N $
\begin{equation} \label{asymptotic}
n_k^{(out)} (t)  \sim \tilde{n}_k^{(out)} (t) = e^{- k E } A(0) \tilde{n}_S(t)  \left[ 1 + B \varphi(\sigma)^k \right],\  N- k \gg 1 , \quad  N \to \infty 
\end{equation}
for 
\begin{equation} \label{B} B= \frac{ 1+\alpha - e^\sigma - \alpha e^\Delta  }{ e^\sigma + \alpha e^\Delta - \alpha \varphi(\sigma) }.
\end{equation}
Notice that the quasi steady state approximation $n_k^{(out)} (t)  \approx \tilde{n}_k^{(out)} (t)$ and $n_S(t) \approx \tilde{n}_S (t) $ is valid for $t \approx N $.

\subsubsection{Asymptotic behaviour of $n_k^{(out)}$ when $\Delta < \Delta_c$ and $k \gg 1 $, $N-k \gg 1 $.}
We now want to study the asymptotic behaviour of $n_k^{(out)} $ as $k  \to \infty $. 
To this end we first of all assume that $\Delta < \Delta_c$. 
A direct computation shows that in this case we have that $\varphi(\sigma) < 1$. As a consequence we deduce that, in the subcritical case (i.e. when $\Delta < \Delta_c$) we have that
 \begin{equation} \label{k large N-k large delta small}
\tilde{n}_k^{(out)} (t) \sim A(0) \overline n_S  M(t)  e^{- k E } \text{ as } k \to \infty. 
\end{equation}
Using the quasi steady state approximation we deduce that 
$
n_k^{(out)} (t) \sim  A(0) \overline n_S  M(t)  e^{- k E } $  as $ k \gg 1 $, $N- k \gg 1$ and $ t \approx N $. 

\subsubsection{Asymptotic behaviour of $n_k^{(out)}$ when $\Delta > \Delta_c$ and $k \gg 1 $, $N-k \gg 1 $.}
 On the contrary, in the supercritical case in which $\Delta > \Delta_c $, we have that $\varphi(\sigma) > 1$, hence 
 \begin{equation} \label{k large N-k large delta large}
  \tilde{n}_k^{(out)} (t)  \sim A(0)  \overline n_S  M(t)  B e^{ - k \psi(\sigma, E) }  \text{ as } k \to \infty,
 \end{equation}
 where we recall that $B$ is defined by \eqref{B} and $\psi $ is given by \eqref{psi} and it depends non trivially on $\sigma$. The quasi steady state approximation implies that  $ n_k^{(out)} (t)  \sim A(0)  \overline n_S  M(t)  B e^{ - k \psi(\sigma, E) }  $ as $k \gg 1 $, $N-k \gg 1 $ and $t \approx N $. 
 
Notice that both in the subcritical and in the supercritical case we have that the asymptotic behaviour of $n_k^{(out)} $ as $k \to \infty $ is given by $F(t) e^{- \lambda (\sigma , E) k } $ for a suitable function $F(t)$ encoding the change of mass in the system and a suitable exponent $\lambda(\sigma, E )$. 
The main difference between the supercritical and the subcritical case is that in the first case, when $\Delta > \Delta_c$, we have that $\lambda(\sigma, E ) =  E- \log(\varphi(\sigma ) )  $, hence $\lambda (\sigma, E) $ depends on the binding energy $\sigma $. Instead in the case $\Delta < \Delta_c$ we have that $\lambda(\sigma, E) $ does not depend on the binding energy $\sigma$. Indeed in that case we have that $\lambda(\sigma, E) =  E$. As we will see later in Subsection \ref{sec:discrimination} the fact that $\lambda(\sigma, E)$  depends on $\sigma $ when $\Delta > \Delta_c$ is crucial in order to have strong discrimination properties.

\subsubsection{Asymptotic behaviour in the critical case $\Delta = \Delta_c$ when $k \gg 1 $ and $N-k \gg 1 $}
By completeness we also study the behaviour of $n_k $ as $k \to \infty $ in the critical case $\Delta = \Delta_c$. 
Under this assumption, we have that $\alpha (e^\Delta-1) (e^E-1) = e^\sigma$. 
As in the case $\Delta \neq \Delta_c $ we have that the behaviour in the outer region is described by equation \eqref{ODE half line stationary}. 
We will now compute an explicit solution. 

\paragraph{Explicit solution to \eqref{ODE half line stationary}.}
First of all we notice that when $\Delta = \Delta_c$ we have that 
\[
\tilde{n}_k^{part} (t) = \frac {  k e^{- k E } \tilde{n}_S(t)}{ \alpha (e^{\Delta + E}  -1 )}, \quad k \in \mathbb N
\] 
is a particular solution to  \eqref{ODE half line stationary2}. 
We now aim at finding a solution to the homogeneous equation corresponding to \eqref{ODE half line stationary2}, i.e. to the following equation
\[ 
 0 =- \alpha\left(1+e^{E+ \Delta}  \right)  \tilde{n}_k^{(out)} + \alpha e^E \tilde{n}_{k+1}^{(out)}  + \alpha e^{\Delta} \tilde{n}_{k-1}^{(out)}, \quad k \in \mathbb N_0. 
 \] 
In this case we have homogeneous solutions of the form $n_k^{(hom)} =\theta(0)^k $ where $\theta(0)$ is a solution to 
\[ 
\theta(0)^2 - (e^\Delta + e^{- E} ) \theta(0) + e^{\Delta - E } =0. 
\]
Notice that the two solutions $\theta_{1}(0)$ and $\theta_2(0)$ to the second order equation are given by 
 \[
\theta_2(0)=e^\Delta, \quad  \theta_1(0)=e^{- E}. 
 \]
 As before, the only admissible solution is $\theta_1(0)< 1 $. 
 Therefore, the solution to equation \eqref{ODE half line stationary2} is
 \begin{equation} \label{eq:n_k inner delta_c}
 \tilde{n}_k^{(out)}(t) =e^{-k E } \left(  \frac{k \tilde{n}_S(t)  }{ \alpha (e^{\Delta + E}  -1 )}+ F(t) \right), \quad k \in \mathbb N 
 \end{equation}
 where $F(t)=\tilde{n}_0^{(out)}(t)$ and where $\tilde{n}_0^{(out)} (t)$ can be computed by solving equation \eqref{ODE half line stationary}. Indeed we have that 
 \[
 0=\tilde{n}_S (t) -(e^\sigma +\alpha e^\Delta) \tilde{n}_0^{(out)}  (t) + \alpha e^E \tilde{n}_1^{(out)} (t) = \tilde{n}_S (t) -(e^\sigma +\alpha e^\Delta) \tilde{n}_0^{(out)}  (t)+ \alpha \left( \frac{\tilde{n}_S  (t)}{ \alpha (e^{\Delta + E}  -1 )}+ F(t) \right) 
 \]
 which implies that 
 \[
 \tilde{n}_0^{(out)}(t)=\frac{e^\Delta }{ \alpha (e^{\Delta +E} -1 )(e^\Delta -1 )} \tilde{n}_S(t) . 
 \]
 \paragraph{Asymptotic behaviour of $n_k^{(out)}$ as $k \to \infty $ when $\Delta =  \Delta_c $.}
 Equality \eqref{eq:n_k inner delta_c} implies that
  \begin{equation} \label{k large N-k large delta c}
 \tilde{n}_k^{(out)} (t) \sim  \frac{ k e^{-k E } \tilde{n}_S(t) }{ \alpha (e^{\Delta + E}  -1 )}\text{ as } k \to \infty. 
 \end{equation}
 The quasi steady state approximation implies that $ n_k^{(out)} (t) \approx  \frac{ k e^{-k E } \tilde{n}_S(t) }{ \alpha (e^{\Delta + E}  -1 )}$ as $k \gg 1 $ and $N-k \gg 1 $ and for $t \approx N $. 
As in the subcritical case $\Delta < \Delta_c $, in this case we have that $\lambda (\sigma, E) = E$, hence $\lambda (\sigma, E) $ does not depend on the binding energy $\sigma$.

\subsection{Inner region: $N-k  \approx 1 $ as $N \to \infty$}
We now study the behaviour of $n_k $  in the region in which $k = N-m $ and $m \approx 1 $.
In this region $k $ is large and the dynamics is described by the equation for $n_k $ in \eqref{ODEs}, i.e.  
\begin{align}\label{eq stationary inner1}
        \frac{d n_k }{dt} &= n_S e^{- k E} - \left( e^{\sigma } + \alpha e^E + \alpha e^\Delta  \right)  n_k + \alpha e^E n_{k+1}  + \alpha e^{\Delta} n_{k-1} - \mu n_k, \quad k \in \{0, \dots, N  \}  \\
       n_{ N+1}&=0. \nonumber 
\end{align}
Notice that we are rewriting the equation for $n_N $ in \eqref{ODEs} introducing artificially the state $N+1$ in the model and imposing that $n_{N+1}=0$. This allows to have the same equation to describe the evolution of $n_k $ for $k < N $ and for $k =N $. 

Recall that $M$ changes in exponentially long times, i.e. for $t \approx e^{cN} $ for a positive constant $c$. 
This allows us to approximate $n_k$ with a quasi steady state solution $\tilde{n}_k$ 
Since we assume that $ \mu \sim e^{- b N } $ as $N \to \infty $, this quasi steady state approximation for the solution to equation \eqref{eq stationary inner1} satisfies the following system of equations
\begin{align} \label{eq stationary inner}
       0 &= \tilde{n}_S e^{- k E}  - \left( e^{\sigma } + \alpha e^E + \alpha e^\Delta  \right)   \tilde{n}_k + \alpha e^{E }  \tilde{n}_{k+1}  + \alpha e^{\Delta}  \tilde{n}_{k-1}, \quad  k \in \{0, \dots, N  \} \\
       \tilde{n}_{N+1}&=0. \nonumber
\end{align}
Notice that the condition $ \tilde{n}_{N+1} =0$ plays the role of an absorbing boundary conditions, indeed when a complex reaches the state $N+1$, it is lost. This boundary condition, together with the degradation term $\mu $, is the reason why the total mass in our system slowly decreases in time when $t \approx e^{cN } $. 
In order to analyse the solution to \eqref{eq stationary inner} we consider separately the subcritical case from the supercritical case. 
\subsubsection{Subcritical case $\Delta < \Delta_c $.}
In order to study the solution to \eqref{eq stationary inner} it is convenient to consider the following change of variables 
\begin{equation} \label{change of variable subcritical}
h_m (t) := e^{(N-m)E}  \tilde{n}_{N-m }(t) =e^{k E}  \tilde{n}_k (t)
\end{equation} 
where we recall that $N-k=m$. 
Therefore we now study the following equation for $h_m$ 
\begin{align} \label{eq inner delta <}
       0 &=  \tilde{n}_S(t)  - \left( e^{\sigma } + \alpha e^E + \alpha e^\Delta  \right)  h_m (t)+ \alpha  h_{m-1} (t) + \alpha e^{\Delta+ E} h_{m+1} (t), \quad m \in \mathbb N  \\
       h_{-1}(t)&=0 \nonumber, \\
   \lim_{m \to \infty } h_m(t) &=  A(0)  \tilde{n}_S(t)  .  \nonumber
\end{align}
We stress that the condition $h_{-1}(t)=0$ corresponds to $ \tilde{n}_{N+1}(t)=0$. Instead the condition on the limit of $h_m $ as $m \to \infty $ is a consequence of the matching with the behaviour of $ \tilde{n}_k$ in the outer region, i.e. \eqref{k large N-k large delta small}. 

Notice that $h^{(p)}_m = A(0)  n_S(t)  $ is a particular solution to equation \eqref{eq inner delta <}. 
We now study the homogeneous solutions to equation \eqref{eq inner delta <}, i.e. solutions to \eqref{eq inner delta <} without the source term.
In particular we look for homogeneous solutions of the form 
\[
h^{(h)}_m =\Theta^m, \quad \forall m \in \mathbb N.
\]
Substituting this ansatz in equation \eqref{eq inner delta <} we obtain that $\Theta $ must satisfy the following equation
\[
 \Theta^2 - \frac{ \Omega(0)}{\alpha e^{E+\Delta}} \Theta + e^{-(E+ \Delta) }  =0 .
\]
This equation has two solutions $\Theta_{1}$ and $\Theta_2$ given by 
\[
\Theta_{1} = \frac{1}{2 \alpha e^{E+\Delta} } \left( \Omega(0) -  \sqrt{\Omega (0)^2 - 4 \alpha^2 e^{E+\Delta}}  \right) \text{ and } \Theta_{2} = \frac{1}{2 \alpha e^{E+\Delta} } \left( \Omega(0) +  \sqrt{\Omega (0)^2 - 4 \alpha^2 e^{E+\Delta}}  \right) .
\]
Notice that since we expect the limit as $m \to \infty $ of $ h_m $ to be finite for every fixed time $t>0 $ then $\Theta^m $ is an admissible homogeneous solution only when $\Theta \leq 1 $. 
Moreover, since $\Delta < \Delta_c $, then we have that 
\[
 \Theta_2 = \frac{1}{2 \alpha e^{E+\Delta} } \left( \Omega(0) + \sqrt{\Omega (0)^2 - 4 \alpha^2 e^{E+\Delta}}  \right) >1. 
\]
Hence there exists a unique admissible homogeneous solution, which is 
\[
\Theta_1= \frac{1}{2 \alpha e^{E+\Delta} } \left( \Omega(0) - \sqrt{\Omega (0)^2 - 4 \alpha^2 e^{E+\Delta}}  \right) = \varphi(\sigma) e^{- (E+\Delta) } < 1.
\]

As a consequence, the solution to \eqref{eq inner delta <} is given by 
\[
h_m(t) = h_m^{(p)} (t)+  h_m^{(h)} (t)= A(0)   \tilde{n}_S (t)+ F(t) \Theta^m = A(0)  \tilde{n}_S(t)  + F(t) \varphi(\sigma)^m e^{- m (E+ \Delta) }, \quad  m \in \mathbb N \cup \{ -1\}. 
\]
Moreover, the condition $h_{-1}=0$, implies that 
\[
F(t) = - A(0)  \varphi(\sigma)  e^{- (E+\Delta) }  \tilde{n}_S(t)  . 
\]
Summarizing we have that
\[
h_m = A(0)   \tilde{n}_S  ( 1 -    \varphi(\sigma)^{m+1} e^{- (m+1) (E+ \Delta) })\quad  m \in \mathbb N \cup \{ -1\}.
\]
Hence we obtain that 
\begin{equation} \label{asymptotics of n_N sub}
 \tilde{n}_{N} (t) \sim  e^{- N E } h_0=  e^{- N E } A(0)    ( 1 -    \varphi(\sigma) e^{-  (E+ \Delta) })  \tilde{n}_S(t) \text{ as } N \to \infty. 
\end{equation}
Using the quasi steady state approximation $\tilde{n}_{N} (t) \approx n_{N} (t) $ for $t \approx N $ we deduce that 
\[
 n_{N} (t) \sim  e^{- N E } h_0=  e^{- N E } A(0)    ( 1 -    \varphi(\sigma) e^{-  (E+ \Delta) })  \tilde{n}_S(t) \text{ as } N \to \infty , \ t \approx N.
\]
Here we stress again that the asymptotic behaviour of $n_N $ as $N \to \infty $ is of the form $ F(t) e^{- N \lambda(\sigma, E) } $ where the parameter $\lambda (\sigma, E) $, in this subcritical case $\Delta < \Delta_c $, does not depend on $\sigma$  and where $F(t) $ is a suitable function of time. 
\subsubsection{Supercritical case $\Delta > \Delta_c $.}
As we did in the subcritical case (see \eqref{change of variable subcritical}) we remove the leading exponential behaviour by using the change of variables 
\[ 
\ell_k =  e^{ k \lambda (\sigma)  } \tilde{n}_k = e^{ k E - k \log(\varphi(\sigma)) } \tilde{n}_k, 
\] 
where $\lambda $ is defined as \eqref{lambda}. 
Performing the change of variable in equation \eqref{eq stationary inner}
 we deduce that $\ell_k $ must satisfy the following equation
\begin{align*}
0 &= \tilde{n}_S(t)  - \left( e^{\sigma } + \alpha e^E + \alpha e^\Delta  \right) e^{ k \log(\varphi(\sigma)) }   \ell_k(t) + \alpha e^{ (k +1)\log(\varphi(\sigma)) }  \ell_{k+1}  (t) + \alpha e^{\Delta+ E +   ( k-1) \log(\varphi(\sigma)) } \ell_{k-1}(t) \\
       0& =\ell_{N+1}(t) \nonumber. 
       \end{align*}
       Since we are analysing the behaviour in the inner region where $k \gg 1 $ we have that $\tilde{n}_S \ll e^{ k \log(\varphi(\sigma)) }   \ell_k$, $\tilde{n}_S \ll e^{\Delta+ E +   ( k-1) \log(\varphi(\sigma)) } \ell_{k-1}$, and $\tilde{n}_S \ll e^{ (k +1)\log(\varphi(\sigma)) }  \ell_{k+1} $. Therefore we can approximate the equation above as follows 
       \begin{align*}
0 & \sim  - \left( e^{\sigma } + \alpha e^E + \alpha e^\Delta  \right) \varphi(\sigma)    \ell_k (t)+ \alpha  \varphi(\sigma)^2  \ell_{k+1}(t)  + \alpha e^{\Delta+ E  } \ell_{k-1} (t), \text { as } k \to \infty \\
       0&= \ell_ {N+1}(t) \nonumber. 
       \end{align*}
       Moreover, we define $h_m = \ell_{N-m } $. Then $h_m$ satisfies the following equation 
              \begin{align*}
0 & =   - \left( e^{\sigma } + \alpha e^E + \alpha e^\Delta  \right) \varphi(\sigma)   h_m(t) + \alpha \varphi(\sigma)^2  h_{m-1} (t)  + \alpha e^{\Delta+ E  }  h_{m+1} (t), \quad m \in \mathbb N \\
       h_{-1}(t)&=0 \nonumber \\
       \lim_{m \to \infty} h_m (t)&=B A (0) \tilde{n}_S(t)
       \end{align*}
       where the last equality comes from the matching condition, indeed we have that for $k \gg 1 $ and $N-k \gg 1 $ we have that the behaviour of $\tilde{n}_k$ is given by  \eqref{k large N-k large delta large}. 
       
       We make the ansatz $h_m(t) = \Theta^m $ for every $m \in \mathbb N  $ and obtain that 
       \[
       \alpha e^{\Delta+ E  }  \Theta^2 - \Omega(0) \varphi(\sigma)    \Theta  + \alpha  (\varphi(\sigma))^2 =0.   
       \]
       We deduce that 
       \[
       \Theta_{12} = \frac{\varphi(\sigma)}{2\alpha e^{\Delta+ E  }} \left( \Omega(0) \pm \sqrt{\Omega (0)^2 - 4 \alpha^2 e^{E+\Delta}} \right) 
       \]
       In particular we obtain that 
       \[
       \Theta_1 =  \frac{\varphi(\sigma)}{2\alpha e^{\Delta+ E  }} \left( \Omega(0) - \sqrt{\Omega (0)^2 - 4 \alpha^2 e^{E+\Delta}} \right) = \varphi(\sigma)^2 e^{- (\Delta+ E ) } 
       \]
       and 
            \[
       \Theta_2 =  \frac{\varphi(\sigma)}{2\alpha e^{\Delta+ E  }} \left( \Omega(0) + \sqrt{\Omega (0)^2 - 4 \alpha^2 e^{E+\Delta}} \right) = 1
       \]
       Both these solutions are admissible as they are both smaller or equal than one. 
We deduce that 
\[
h_m(t) = F_1(t) + F_2(t) e^{ 2m \log \varphi(\sigma) - m (\Delta + E ) }, \quad \forall m  \in \mathbb N  \cup \{ -1 \}. 
\]
Matching the behaviour in the inner with the behaviour in the outer region implies that $F_1 (t)= B A(0) n_S(t) $. 
On the other hand, since $h_{-1} (t)=0$, we also have that 
\[ 
F_2(t) = - F_1  (t) e^{ -(\Delta + E)} (\varphi(\sigma))^2= - B  A(0) n_S(t)e^{- ( \Delta + E )}(\varphi(\sigma))^2, 
\] 
where we recall that $B$ is given by \eqref{B}. 
Summarizing we have that 
\[
h_m(t) = B A(0) \left( 1 - e^{ 2(m+1) \log \varphi(\sigma) - (m+1) (\Delta + E ) } \right) \tilde{n}_S (t), \quad \forall m  \in \mathbb N  \cup \{ -1 \}
\]
therefore  we have that when $t \approx N $ it holds that 
\begin{equation}\label{asymptotics of n_N super}
n_{N}(t) \sim e^{- NE + N \log (\varphi(\sigma)) } h_0 (t)= e^{- N E + N \log (\varphi(\sigma)) } A(0) B \left( 1 - ( \varphi(\sigma))^2 e^{  -  (\Delta + E ) } \right)  \tilde{n}_S (t), \text{ as } N \to \infty.
\end{equation}
Notice that the asymptotic behaviour of $n_N $ as $N \to \infty $ is of the form $ F(t) e^{- N \lambda (\sigma, E)} $ where $\lambda (\sigma,E)$ depends on $\sigma$ where $F(t) $ is a function of time that takes into account the change of mass, notice that this function changes very slowly in time, more precisely in time scales much larger than $N$. 

\subsubsection{Critical case $\Delta = \Delta_c $.}
For completeness, we now analyse the behaviour of $\tilde{n}_k $
in the inner region under the assumption that $\Delta = \Delta_c$.
To this end we make the change of variables $\ell_k := \frac{ e^{k E}}{k} \tilde{n}_k $ in equation \eqref{eq stationary inner}
 and deduce that $\ell_k $ satisfies 
\begin{align} \label{eq:1 delta_c inner}
       0 &= \tilde{n}_S (t) - \left( e^{\sigma } + \alpha e^E + \alpha e^\Delta  \right)  k \ell_k (t) + \alpha (k+1) \ell_{k+1} (t) + \alpha e^{\Delta+E} (k-1) \ell_{k-1} (t), \quad k \in \{ 0, \dots, N \}  \\
       0 &=\ell_{N+1} (t). \nonumber 
\end{align}
Since in the inner region we have that $k \gg 1 $, then $\tilde{n}_S $ is of lower order with respect to the other terms in the equation. Therefore, we approximate equation \eqref{eq:1 delta_c inner} with the following homogeneous equation 
\begin{align} \label{eq: delta_c inner homogeneous}
       0 &= - \left( e^{\sigma } + \alpha e^E + \alpha e^\Delta  \right)  k \ell_k  (t) + \alpha (k+1) \ell_{k+1} (t)  + \alpha e^{\Delta+E} (k-1) \ell_{k-1}  (t) \text{ as } k \to \infty \\
       \ell_{N+1}(t) &=0. \nonumber  
\end{align}
In order to match the behaviour in the inner region with the behaviour in the outer region it is convenient to consider the change of variable $k = N-m $ and to study $h_{m} := (N-m) \ell _{N-m} $. Notice that, due to its definition, $h_{m}$ satisfies the following equation 
\begin{align} \label{eq: delta_c inner homogeneous m}
       0 &=  - \left( e^{\sigma } + \alpha e^E + \alpha e^\Delta  \right)  h_m(t)   + \alpha h_{m-1} (t) + \alpha e^{\Delta+E} h_{m+1} (t) \\
       h_{-1}(t)&=0, \quad m \in \mathbb N \nonumber \\
       \lim_{m \to \infty } h_m (t) &= \frac{ \tilde{n}_S(t)  }{\alpha (e^{\Delta + E } -1 ) } \nonumber. 
\end{align}
Notice that the last condition comes from the matching with the outer region, i.e. by the fact that the behaviour in the outer region is given by \eqref{k large N-k large delta c}. 

Since the equation for $h_m $ is homogeneous it has solutions of the form $h_m= \theta^m $ where $\theta $ satisfies 
\[
\theta^2 - \left( \frac{ 1+ e^{\Delta + E } }{e^{\Delta + E }}\right) + \frac{1}{e^{\Delta + E }}=0 . 
\]
We deduce that both $\theta_1^m  = 1 $ and $\theta_2^m  = e^{-m (\Delta+ E) } $ are admissible solutions to \eqref{eq: delta_c inner homogeneous m}
. 
Therefore we have that the sequence $\{ h_m\}_{m} $ given by 
\[
h_m(t) = F_1(t) + F_2(t) e^{-m (\Delta+ E) },
\]
with $F_2=-F_1e^{- (\Delta+ E) } $ and with 
\[
F_1(t) = \frac{\tilde{n}_S (t)}{\alpha (e^{\Delta + E} - 1 ) },
\]
is the solution to \eqref{eq: delta_c inner homogeneous m}.
In particular we conclude that $h_0 = \frac{n_S (t) } {\alpha e^{\Delta + E} }$ and therefore 
\begin{equation} \label{asymptotics of n_N crit}
n_{N} (t) \sim \frac{\tilde{n}_S(t)}{\alpha e^{E+\Delta } } N e^{-N E } \text{ as } N \to \infty \text{ and } t \approx N.  
\end{equation}

\subsection{Discrimination property} \label{sec:discrimination}
We now study the discrimination property of the model under different assumptions on $\Delta$.
To this end we analyse the probability of response $p_{res} $ defined as in \eqref{prob response} as $N \to \infty $ in the different regimes, subcritical, critical and supercritical using the asymptotics \eqref{asymptotics of n_N sub}, \eqref{asymptotics of n_N super}, \eqref{asymptotics of n_N crit} for $n_N$. 
\paragraph{Subcritical case $\Delta < \Delta_c$.}
Using the asymptotics for $n_{N} $ in \eqref{asymptotics of n_N sub} we approximate the probability of response as follows 
\begin{align*} 
p_{res} (\sigma) = \alpha e^\Delta \int_0^\infty n_{N} (t) dt \approx \alpha e^{\Delta} e^{-NE} A(0) \overline n_S \left(1- \varphi(\sigma) e^{-(E+ \Delta )} \right) \int_0^\infty M (t) dt \text{ as } N \to \infty. 
\end{align*} 
In the approximation above, we are using that the region where $t \approx N $ gives the main contribution to the integral $\int_0^\infty n_N(t) dt $.
Moreover, notice that we  have that 
\begin{equation} \label{asympt mass}
\frac{dM }{dt} = - \mu M - \alpha e^{\Delta } n_{N} \sim - \left( \mu + \alpha e^{\Delta} e^{-NE} A(0) \overline n_S \left(1- \varphi(\sigma) e^{-(E+ \Delta )} \right) \right) M (t) \text{ as } N \to \infty. 
\end{equation}
Notice that equation \eqref{asympt mass} implies that the mass changes in exponentially large time scales.
More precisely we have that 
\begin{equation} \label{consistency}
M(t) \sim e^{-  \left( \mu + \alpha e^{\Delta} e^{-NE} A(0) \overline n_S \left(1- \varphi(\sigma) e^{-(E+ \Delta )} \right) \right) t } \text{ as } N \to \infty. 
\end{equation}
Using this approximation for $M $ we deduce that 
\begin{equation} \label{formal p res sub}
p_{res} (\sigma)  \approx \left(  1 +\frac{\mu e^{NE}}{ \alpha e^{\Delta} A(0) \overline n_S \left(1- \varphi(\sigma) e^{-(E+ \Delta )} \right) } \right)^{-1} \text{ as } N \to \infty. 
\end{equation}
Using the fact that $\mu \sim e^{- b N } $ as $N \to \infty $ we obtain that 
\[
p_{res}(\sigma)  \approx \left(  1 + C_1e^{N(E- b)} \right)^{-1} \text{ as } N \to \infty. 
\]
where $C_1:=\left[ \alpha e^{\Delta} A(0) \overline n_S \left(1- \varphi(\sigma) e^{-E- \Delta} \right) \right]^{-1}$. 
In particular we conclude that $p_N \approx 0 $ when $E>b $, $p_N \approx 1 $ as $N \to \infty $ if $E< b $. 
Notice that in this case if we plot the probability $p_{res} (\sigma) $, defined as \eqref{prob response}, as a function of $\sigma $ we do not have a sharp transition from $0 $ to $1 $ around a critical binding energy $\sigma_c$ (see Figure \ref{fig2}). The dependence on $\sigma $ is encoded only in the constant $C_1$.

\paragraph{Critical case $\Delta = \Delta_c$.}

Arguing as in the subcritical case $\Delta < \Delta_c$ we deduce that \eqref{asymptotics of n_N crit} implies 
\[
p_{res} (\sigma) \approx  \left( 1 + \frac{e^{N(E-b)}}{ N \overline n_S } \right)^{-1} \text { as } N \to \infty
\] 
We conclude that $p_{res} (\sigma) \approx 0 $ as $N \to \infty $ when $E>b $, while $p_{res} (\sigma) \approx 1 $ as $N \to \infty $ if $E \leq b $.
The situation is similar to the one that we have for the subcritical case $\Delta < \Delta_c $. 

\paragraph{Supercritical case $\Delta > \Delta_c$}

Using the approximation for $n_{N} $ as $N \to \infty $ given in \eqref{asymptotics of n_N super} we deduce that
\begin{equation}\label{formal p super}
p_{res} (\sigma)  \approx \left( 1+C_2 e^{(E -b- \log(\varphi(\sigma) ) N }\right)^{-1}
\end{equation}
where $C_2= \left[ \alpha e^\Delta B A(0) \overline n_S \left( 1- (\varphi(\sigma))^2 e^{ - (\Delta + E) } \right)  \right]^{-1} $. 
Recall that $\varphi(\sigma) $ is a function of $\sigma $, indeed 
\[ 
\varphi(\sigma)= \frac{1}{2 \alpha } \left(  e^\sigma + \alpha (e^\Delta + e^E)  -  \sqrt{(e^\sigma + \alpha (e^\Delta + e^E) )^2 - 4 \alpha^2  e^{\Delta + E} } \right). 
\]

Moreover notice that 
\[
\varphi'(\sigma)= \frac{e^\sigma}{2 \alpha } \left( 1  - \frac{e^\sigma + \alpha (e^\Delta + e^E)  }{ \sqrt{(e^\sigma + \alpha (e^\Delta + e^E) )^2 - 4 \alpha^2  e^{\Delta + E} } } \right) <0, 
\]
hence the function $\varphi$ is strictly decreasing. In particular this implies that the function $\lambda(\sigma, E)= E- \log(\varphi(\sigma)) $ is increasing in $\sigma$. 
As a consequence if we assume that 
\begin{equation} \label{conditions on b}
\lim_{\sigma \to \infty } \lambda(\sigma, E)  = E>  b > \max\{  E - \Delta  , 0 \} = \lim_{\sigma \to - \infty } \lambda(\sigma, E),    
\end{equation}
then we have that there exists $\sigma_c \in \mathbb R $ such that $\lambda(\sigma_c , E) = b $. 
Notice that if we consider $p_{res} $ as a function of $\sigma $ we see a sharp transition from $1$ to $0$, i.e. from response to non-response, around the critical binding energy $\sigma_c $. 
The region in which the transition takes place is the region in which we have that 
\[
|\lambda (\sigma, E) - \lambda (\sigma_c, E) | \lesssim \frac{1}{N }. 
\]
This region tends to $0 $ as $N \to \infty $. 
If instead we have that $\sigma $ is such that $\lambda (\sigma, E ) - \lambda (\sigma_c, E ) > \frac{1}{N }$ then we have $p_{res}(\sigma) \approx 1 $ as $N \to \infty $ and if $\lambda (\sigma_c, E) - \lambda (\sigma, E) > \frac{1}{N }$ then we have $p_{res} (\sigma) \approx 0$ as $N \to \infty$.

Notice that the discrimination property that we obtain for $\Delta > \Delta_c$ relies on the fact that we assume that ligands are degraded at a rate $\mu $ that satisfy \eqref{degradation}. If we assume that ligands are not degraded, hence $\mu =0$, the discrimination property is lost.  

\section{Approximated model on $\mathbb N$} \label{sec:half line}
In this Section we study in detail the model that we obtain taking the limit as $N \to \infty $ in \eqref{ODEs}. Analysing this model rigorously using  Laplace transform methods is easier than analysing the model with a finite number of states $N$. 
Moreover the behaviour of the model on the half line gives insights on the behaviour of the model with a finite number of states $N$ when $N$ is large. 
Indeed we prove that if $\Delta < \Delta_c$, then the solution to the problem on the half line is such that as $t \to \infty $
\[
n_k(t) \sim  A(0) \overline n_S e^{- k E}  \text{ for }  k \approx t, \ t \to \infty 
\]
We refer to Theorem \ref{theo:half line asymptotics delta subcrit} for the rigorous statement. 
If instead $\Delta > \Delta_c$, then 
\[
 n_k(t) \sim B(0)  A(0) \overline n_S e^{- k (E- \log(\varphi(\sigma))}  \text{ for }  k \lesssim t , \ t \to \infty 
\]
while 
\[
 e^{ k (E- \log(\varphi(\sigma))} n_k (t) \ll 1   \text{ for }  k \gtrsim t, \ t \to \infty .  
\]
We refer to Theorem \ref{theo:half line asymptotics delta supercrit} for the rigorous statement. 
Notice that the results that we obtain for this model  are consistent with the formal arguments in Section \ref{sec:formal}. 
Moreover notice that, when $k \approx t $ we have that $n_k(t) $ is at steady state. This is consistent with the assumption that $n_N(t)$ stabilizes in time scales of order $N$ that we make in Section \ref{sec:formal}.  

The system of ODEs that we study in this section is the one that we obtain taking the limit as $N \to \infty $ in \eqref{ODEs}, i.e. 
\begin{align} \label{ODE half line detail}
    \frac{d n_S }{dt} &= - n_S  - \frac{e^{- E} }{ 1- e^{- E}} n_S + e^{\sigma }  \sum_{k=0}^{\infty} n_k \nonumber \\
    \frac{d n_0 }{dt} &= n_S  - e^{\sigma } n_0 + \alpha e^E n_1 - \alpha e^{\Delta} n_0 \\
        \frac{d n_k }{dt} &= n_S e^{- k E} - \left( e^{\sigma } + \alpha e^E + \alpha e^\Delta  \right)  n_k + \alpha e^E n_{k+1}  + \alpha e^{\Delta} n_{k-1}   \nonumber
\end{align}
with initial datum $n(0)=(1, 0, \dots, 0)$.
Notice that  since $(n_S, n_0, \dots, n_N) $ is the vector of the probabilities of reaching  the different states of the system in the time interval $(0, t] $, we have that $M(t) = n_S (t) + \sum_{k=0}^{\infty} n_k(t) =1.$

Taking Laplace transforms in all the terms in \eqref{ODE half line detail} we obtain the following systems of equations for the vector of Laplace transforms $(\hat{n}_S (z), \hat{n}_0(z), \hat{n}_1(z), \dots, \hat{n}_{N-1}(z) )$ of $(n_S(t), n_0(t), n_1(t), \dots,n_{N-1} (t))$
\begin{align}\label{Laplace ODE half line}
  \left( z + \frac{1}{1-e^{- E}} \right)   \hat{ n}_S  &= 1+ e^{\sigma }  \sum_{k=0}^{\infty} \hat{n}_k  \nonumber \\
\left(   e^{\sigma } +z + \alpha e^{\Delta} \right) \hat {n}_0 &=  \hat{ n}_S  + \alpha e^E  \hat{ n}_1  \\
\Omega (z) \hat {n}_k &= \hat{n}_S e^{- k E} + \alpha e^E \hat{n}_{k+1}  + \alpha e^{\Delta} \hat{n}_{k-1} \quad k \in \mathbb N_0 \nonumber ,
\end{align}
where $ z \in \mathbb C$ and where we use the notation 
\begin{equation}\label{Omega}
\Omega (z):= z+ e^{\sigma }  + \alpha e^{\Delta} +  \alpha e^{E}. 
\end{equation}
We will compute now explicit solutions to \eqref{Laplace ODE half line}. As a first step we compute $\hat{n}_S$. 

\begin{lemma}
    Assume that the vector $(\hat{n}_S (z), \hat{n}_0(z), \hat{n}_1(z), \dots, \hat{n}_{N-1}(z) )$ is a solution to \eqref{Laplace ODE half line}. Then the function $\hat{n}_S $ is analytic for every $z \in \mathbb C \setminus \{ z_S, 0  \} $ where 
\begin{equation} \label{zS}
z_S := - \left(  \frac{1}{1-e^{- E}} + e^\sigma \right).
\end{equation}
More precisely, we have that
    \begin{equation}\label{n_S roof}
\hat{ n}_S(z) = \frac{z+ e^\sigma }{ z \left( z - z_S\right) }.
\end{equation}
\end{lemma}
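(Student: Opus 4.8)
The plan is to extract $\hat{n}_S$ from the infinite system \eqref{Laplace ODE half line} by summing over all states, thereby reducing the problem to a single closed equation. The key observation is that the total mass is conserved: since $n(t)$ is a vector of probabilities of having visited each state in $(0,t]$ and the approximate model on $\mathbb{N}$ has no degradation, we have $M(t) = n_S(t) + \sum_{k=0}^\infty n_k(t) = 1$ for all $t\ge 0$, so $\hat M(z) = \sum_{k} \hat n_k(z) + \hat n_S(z) = 1/z$.

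First I would sum the equations for $\hat n_k$, $k\in\mathbb N$, together with the equation for $\hat n_0$. Adding the $k$-th equation $\Omega(z)\hat n_k = \hat n_S e^{-kE} + \alpha e^E \hat n_{k+1} + \alpha e^\Delta \hat n_{k-1}$ over $k\ge 1$ and the $k=0$ equation $(e^\sigma + z + \alpha e^\Delta)\hat n_0 = \hat n_S + \alpha e^E \hat n_1$, the phosphorylation fluxes telescope: the $\alpha e^E \hat n_{k+1}$ and $\alpha e^\Delta \hat n_{k-1}$ terms largely cancel against the corresponding loss terms in $\Omega(z)\hat n_k$, leaving only boundary contributions which, under the assumption that $\hat n_k(z)\to 0$ as $k\to\infty$ (legitimate since $n_k$ is summable), vanish at infinity. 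The net effect is that $\left(z + e^\sigma\right)\sum_{k\ge 0}\hat n_k = \hat n_S \sum_{k\ge 0} e^{-kE} = \hat n_S/(1-e^{-E})$. Combining this with the first equation $\left(z + \tfrac{1}{1-e^{-E}}\right)\hat n_S = 1 + e^\sigma \sum_{k\ge 0}\hat n_k$ eliminates $\sum_k \hat n_k$ and yields a linear equation for $\hat n_S$ alone; solving it gives
\[
\hat n_S(z) = \frac{z + e^\sigma}{z\left(z + \tfrac{1}{1-e^{-E}} + e^\sigma\right)} = \frac{z + e^\sigma}{z\,(z - z_S)},
\]
with $z_S$ as in \eqref{zS}. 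Analyticity away from $\{0, z_S\}$ is then immediate from this explicit rational expression, and one checks $z_S \ne 0$ since $\tfrac{1}{1-e^{-E}} + e^\sigma > 0$.

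Alternatively — and this is the safer route to make the telescoping rigorous — one can avoid summing the infinite family directly and instead use the mass conservation identity $\sum_{k\ge 0}\hat n_k = 1/z - \hat n_S$ as an input. Substituting this into the first equation of \eqref{Laplace ODE half line} gives $\left(z + \tfrac{1}{1-e^{-E}}\right)\hat n_S = 1 + e^\sigma\left(\tfrac1z - \hat n_S\right)$, which rearranges directly to the claimed formula with no need to manipulate the tail of the series. I would present this as the main computation, since it sidesteps questions about interchanging limits and summation.

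The main obstacle is justifying the manipulation of the infinite sum: one must know that $\sum_k \hat n_k(z)$ converges and equals $\hat M(z) - \hat n_S(z) = 1/z - \hat n_S(z)$ for the relevant range of $z$ (say $\Re z > 0$, then extended by analytic continuation), and that the boundary terms in the telescoping genuinely vanish, i.e. $\alpha e^E \hat n_{k+1}(z) \to 0$ as $k\to\infty$. Both follow from $\sum_k n_k(t) \le 1$ together with dominated convergence for the Laplace transforms, but this should be stated carefully; using the mass-conservation shortcut in the previous paragraph makes it essentially trivial, as it only requires $M(t)\equiv 1$, which is built into the model. Everything after that is elementary algebra.
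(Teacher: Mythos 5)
Your proposal is correct, and the route you designate as the main computation — using mass conservation $M(t)\equiv 1$ to get $\hat n_S + \sum_k \hat n_k = 1/z$ and substituting into the equation for $\hat n_S$ — is exactly the paper's proof. The telescoping argument you sketch first is a reasonable consistency check but is not needed, as you yourself conclude.
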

\begin{proof}
Since in this model we have that $\frac{d M(t)}{dt}  =0$ we deduce that 
\[ 
z \left( \hat{n}_S + \sum_{k=0}^{\infty} \hat{n}_k   \right) =1.
\]
Hence the equation for $\hat{n}_S $ can be rewritten as 
\[
  \left( z + \frac{1}{1-e^{- E}} \right)   \hat{ n}_S  =  e^{\sigma }  \left( \frac{1}{z} -  \hat{ n}_S \right) +1 
\]
This implies that \eqref{n_S roof} holds.
\end{proof}

We find now a particular solution to \eqref{Laplace ODE half line}. 
To this end it is convenient to define the function $A: \mathbb C \rightarrow \mathbb C $, which is given by 
\begin{equation}\label{A}
A (z) := \frac{1}{z-z_A}=\frac{1}{\Omega (z) - \alpha \left( 1+e^{E+ \Delta} \right) } \quad z \in \mathbb C, 
\end{equation}
where 
\begin{equation}\label{pole of A}
z_A := \alpha (e^{E} -1 ) (e^{\Delta } -1 )- e^\sigma. 
\end{equation}

\begin{lemma}
Assume that the function $\hat{n}_0: \mathbb C \rightarrow \mathbb C $ is given and let $\hat{n}_S $ be given by \eqref{n_S roof}. 
We define the sequence $\{ \hat{n}_k^{(p)}\}_{k=1}^\infty $ of functions  $\hat{n}_k^{(p)}: \mathbb C \rightarrow \mathbb C $  given by
\begin{equation} \label{Laplace part}
\hat{n}_k^{(p)} (z)= A(z)e^{- k E} \hat{n}_S(z),\quad  k \in \mathbb N_0.
\end{equation}
The sequence $\{ \hat{n}_k^{(p)}\}_{k=1}^\infty $ is a particular solution to 
\begin{align} \label{Laplace difference}
\Omega (z) \hat {n}_k &= \hat{n}_S e^{- k E} + \alpha e^E \hat{n}_{k+1}  + \alpha e^{\Delta} \hat{n}_{k-1} \quad k \in \mathbb N_0,
\end{align}
\end{lemma}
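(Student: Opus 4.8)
The statement is a direct verification: substitute the proposed ansatz $\hat{n}_k^{(p)}(z)= A(z)e^{-kE}\hat{n}_S(z)$ into the difference equation \eqref{Laplace difference} and check that both sides agree, using the defining relation for $A(z)$ in \eqref{A}. No asymptotics or analytic continuation is needed here; the only care required is the bookkeeping of the exponential factors $e^{-(k\pm1)E}$, and the observation that the identity reduces exactly to the algebraic definition of $A(z)$.

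\textbf{Key steps.} First I would fix $k\in\mathbb N_0$ and $z\in\mathbb C$ with $z\neq z_A$ (so that $A(z)$ is defined), and compute the right-hand side of \eqref{Laplace difference} with $\hat{n}_k$ replaced by $\hat{n}_k^{(p)}$:
\[
\hat{n}_S e^{- k E} + \alpha e^E \hat{n}_{k+1}^{(p)}  + \alpha e^{\Delta} \hat{n}_{k-1}^{(p)}
= \hat{n}_S e^{-kE}\Bigl[ 1 + \alpha e^E A(z) e^{-E} + \alpha e^{\Delta} A(z) e^{E}\Bigr]
= \hat{n}_S e^{-kE}\Bigl[ 1 + \alpha A(z)\bigl(1+ e^{E+\Delta}\bigr)\Bigr].
\]
Second, I would compute the left-hand side, which is simply $\Omega(z)\hat{n}_k^{(p)}(z)=\Omega(z) A(z) e^{-kE}\hat{n}_S(z)$. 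Third, I would observe that the two expressions coincide precisely when
\[
\Omega(z) A(z) = 1 + \alpha A(z)\bigl(1+ e^{E+\Delta}\bigr), \qquad\text{i.e.}\qquad A(z)\bigl[\Omega(z) - \alpha(1+e^{E+\Delta})\bigr] = 1,
\]
which holds by the definition \eqref{A} of $A(z)$. This settles the claim for all $k\in\mathbb N_0$.

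\textbf{Main obstacle.} There is essentially no obstacle: the only thing to be careful about is correctly combining $\alpha e^E A(z)e^{-(k+1)E}$ and $\alpha e^\Delta A(z)e^{-(k-1)E}$ into $\alpha A(z)(1+e^{E+\Delta})e^{-kE}$, and noting that the first line of \eqref{Laplace difference} (the equation for $\hat n_0$) is a genuinely separate relation that this particular solution is not required to satisfy — the lemma only asserts that $\{\hat n_k^{(p)}\}$ solves the recursion \eqref{Laplace difference} for $k\in\mathbb N_0$, and the mismatch at $k=0$ with the $\hat n_0$-equation will later be absorbed by adding an appropriate homogeneous solution.

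\begin{proof}
Fix $k\in\mathbb N_0$ and $z\in\mathbb C$ with $z\neq z_A$, so that $A(z)$ is well defined. Using \eqref{Laplace part} we compute
\[
\alpha e^E \hat{n}_{k+1}^{(p)}(z)  + \alpha e^{\Delta} \hat{n}_{k-1}^{(p)}(z)
= \alpha e^E A(z) e^{-(k+1)E}\hat{n}_S(z) + \alpha e^{\Delta} A(z) e^{-(k-1)E}\hat{n}_S(z)
= \alpha A(z)\bigl(1+ e^{E+\Delta}\bigr) e^{-kE}\hat{n}_S(z).
\]
Hence the right-hand side of \eqref{Laplace difference}, with $\hat{n}_j$ replaced by $\hat{n}_j^{(p)}$, equals
\[
\hat{n}_S(z) e^{- k E} + \alpha e^E \hat{n}_{k+1}^{(p)}(z)  + \alpha e^{\Delta} \hat{n}_{k-1}^{(p)}(z)
= \hat{n}_S(z) e^{-kE}\Bigl[ 1 + \alpha A(z)\bigl(1+ e^{E+\Delta}\bigr)\Bigr].
\]
On the other hand, the left-hand side of \eqref{Laplace difference} is $\Omega(z)\hat{n}_k^{(p)}(z) = \Omega(z) A(z) e^{-kE}\hat{n}_S(z)$. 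By the definition \eqref{A} of $A$ we have $A(z)\bigl[\Omega(z)-\alpha(1+e^{E+\Delta})\bigr]=1$, that is, $\Omega(z)A(z) = 1 + \alpha A(z)\bigl(1+e^{E+\Delta}\bigr)$. Therefore
\[
\Omega(z)\hat{n}_k^{(p)}(z) = \hat{n}_S(z) e^{-kE}\bigl[1+\alpha A(z)(1+e^{E+\Delta})\bigr] = \hat{n}_S(z) e^{- k E} + \alpha e^E \hat{n}_{k+1}^{(p)}(z)  + \alpha e^{\Delta} \hat{n}_{k-1}^{(p)}(z),
\]
so $\{\hat{n}_k^{(p)}\}_{k\in\mathbb N_0}$ satisfies \eqref{Laplace difference} for every $k\in\mathbb N_0$.
\end{proof}
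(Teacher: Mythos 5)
Your proof is correct and follows essentially the same route as the paper: substitute the ansatz into the recursion, combine the shifted exponentials into $\alpha A(z)(1+e^{E+\Delta})e^{-kE}\hat{n}_S(z)$, and observe that the resulting identity is exactly the defining relation $A(z)\left[\Omega(z)-\alpha(1+e^{E+\Delta})\right]=1$. Your added remarks about excluding $z=z_A$ and about the $\hat n_0$-equation being handled later by the homogeneous part are accurate and consistent with how the paper proceeds.
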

\begin{proof}
    We substitute \eqref{Laplace part} in the equation for $\hat {n}_k $ and obtain that
    \[
\Omega (z) \hat {n}_k(z) - \hat{n}_S(z) e^{- k E} - \alpha e^E \hat{n}_{k+1} (z) - \alpha e^{\Delta} \hat{n}_{k-1} (z)  = \left[   A(z) \left(  \Omega (z) - \alpha (1+e^{E+\Delta} ) \right) -1  \right] e^{- k E} \hat{n}_S(z) .
    \]
    Using \eqref{A} we deduce that 
    \[ \left[   A(z) \left(  \Omega (z) - \alpha (1+e^{E+\Delta} ) \right) -1 \right] e^{- k E} \hat{n}_S(z)=0.\]
\end{proof}
Notice that for every $k \in \mathbb N_0$ the function $\hat{n}_k^{(p)}$ is analytic for every $z \in \mathbb C \setminus \{ z_A,  z_S \}   $.

We now study the homogeneous solutions of equation \eqref{Laplace difference}, i.e. we find the solutions to the following equation
\begin{equation} \label{hom}
\Omega (z) \hat {n}_k(z) = \alpha e^E \hat{n}_{k+1}(z)  + \alpha e^{\Delta} \hat{n}_{k-1}(z).
\end{equation}
\begin{lemma}
Assume that $\hat{n}_0: \mathbb C \rightarrow \mathbb C $ is given and let $\hat{n}_S $ be given by \eqref{n_S roof}. 
Then equation \eqref{hom}
admits two solutions $\theta_1^k $ and $\theta_2^k$ where 
\begin{equation} \label{teta1 and teta2}
\theta_1(z) = \frac{1}{2 \alpha e^E } \left( \Omega(z)-\sqrt{\Omega(z)^2 - 4 \alpha^2 e^{E+\Delta}} \right) \ \text{ and } \ \theta_2(z) = \frac{1}{2 \alpha e^E } \left( \Omega(z)+\sqrt{\Omega(z)^2 - 4 \alpha^2 e^{E+\Delta}} \right) 
\end{equation}
\end{lemma}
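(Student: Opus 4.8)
The plan is to substitute the geometric ansatz $\hat{n}_k(z) = \theta(z)^k$ into the homogeneous recurrence \eqref{hom}, reduce it to a single quadratic equation in $\theta(z)$, and read off its two roots.

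First, fix $z \in \mathbb{C}$ and try $\hat{n}_k = \theta^k$ with $\theta \neq 0$. Inserting this into \eqref{hom} gives $\Omega(z)\theta^k = \alpha e^E \theta^{k+1} + \alpha e^\Delta \theta^{k-1}$ for every $k \in \mathbb{N}_0$. Dividing through by $\theta^{k-1}$ removes the dependence on $k$ and leaves the quadratic
\[
\alpha e^E \theta^2 - \Omega(z)\theta + \alpha e^\Delta = 0 .
\]
Applying the quadratic formula yields precisely the two values
\[
\theta(z) = \frac{\Omega(z) \pm \sqrt{\Omega(z)^2 - 4\alpha^2 e^{E+\Delta}}}{2\alpha e^E},
\]
which are the functions $\theta_1(z)$ and $\theta_2(z)$ of \eqref{teta1 and teta2}.

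Second, I would note that both roots are nonzero: by Vieta's formulas $\theta_1(z)\theta_2(z) = e^{\Delta - E} \neq 0$, so the division by $\theta^{k-1}$ above is legitimate and the sequences $\{\theta_1(z)^k\}_k$, $\{\theta_2(z)^k\}_k$ are well defined for all $k \in \mathbb{N}_0$. Finally, since equation \eqref{hom} is linear in the sequence $(\hat{n}_k)_k$ and each of $\theta_1^k$, $\theta_2^k$ satisfies the derived quadratic, each of them solves \eqref{hom}, which is the assertion.

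There is essentially no obstacle here; the only point meriting a word of care is the branch of the square root. For the algebraic statement as written it suffices to choose any square root of $\Omega(z)^2 - 4\alpha^2 e^{E+\Delta}$ pointwise; when $\theta_1,\theta_2$ are later needed as analytic functions one fixes a holomorphic branch of the radical on the domain where $\Omega(z)^2 \neq 4\alpha^2 e^{E+\Delta}$. The degenerate case $\Omega(z)^2 = 4\alpha^2 e^{E+\Delta}$ merely collapses $\theta_1$ and $\theta_2$ to a single value, and the conclusion still holds.
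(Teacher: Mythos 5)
Your proposal is correct and follows essentially the same route as the paper: substitute the ansatz $\theta^k$ into \eqref{hom}, reduce to the quadratic $\theta^2 - \frac{\Omega(z)}{\alpha e^E}\theta + e^{\Delta-E}=0$, and read off the two roots. The extra remarks on the nonvanishing of the roots and the choice of branch of the square root are sound and only add care beyond what the paper records.
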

\begin{proof}
    Notice that the homogeneous equation \eqref{hom} is a linear finite difference equation of order $2.$
    Therefore it has two linearly independent solutions.
    Moreover the solutions are of the form $\theta^k$. Substituting $n_k^{(h)} = \theta^k $ in \eqref{hom} we deduce that
\begin{equation} \label{polinomio hom}
\theta^2(z) -\frac{ \Omega(z) }{\alpha e^E} \theta(z)  +  e^{\Delta - E}=0. 
\end{equation}
Hence the desired result follows. 
\end{proof}

\begin{lemma} \label{lem: theta_1 small teta_2 large}
    Let $\theta_1: \mathbb C \rightarrow \mathbb C $ and $\theta_2 : \mathbb C \rightarrow \mathbb C $ be given by \eqref{teta1 and teta2}. 
    Then we have that 
    \[
    |\theta_1(z) | \leq 1 , \quad |\theta_2 (z) | \geq 1 \quad \forall z \in \mathbb C. 
    \]
\end{lemma}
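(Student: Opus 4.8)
\emph{Proof plan.} Write \eqref{polinomio hom} as $\theta^{2}-s(z)\,\theta+p=0$ with $s(z)=\Omega(z)/(\alpha e^{E})$ and $p=e^{\Delta-E}$. By Vieta's formulas the two roots satisfy $\theta_{1}(z)\theta_{2}(z)=e^{\Delta-E}$ and $\theta_{1}(z)+\theta_{2}(z)=\Omega(z)/(\alpha e^{E})$, so $|\theta_{1}(z)|\,|\theta_{2}(z)|=e^{\Delta-E}$ for every $z$. Hence if $\Delta=E$ the product of the moduli is exactly $1$ and the claim is immediate; if $\Delta<E$ (resp.\ $\Delta>E$) the product is $<1$ (resp.\ $>1$), so both roots cannot lie strictly outside (resp.\ strictly inside) the unit disc, and the whole statement reduces to ruling out that both roots lie strictly inside the disc when $\Delta<E$, and strictly outside it when $\Delta>E$. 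That is the only thing requiring work.

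The device I would use is the reciprocal identity: if $\theta$ is a root of \eqref{polinomio hom}, dividing by $\theta$ gives $\Omega(z)=\alpha e^{E}\theta+\alpha e^{\Delta}\theta^{-1}$. Writing $\theta=re^{i\phi}$ and taking real parts, and using $\Re\Omega(z)=\Re z+e^{\sigma}+\alpha(e^{E}+e^{\Delta})$ from \eqref{Omega}, one gets
\[
\Re z=\alpha\cos\phi\,\bigl(e^{E}r+e^{\Delta}r^{-1}\bigr)-e^{\sigma}-\alpha(e^{E}+e^{\Delta}).
\]
On the half-plane $\{\Re z\ge 0\}$ — the region in which the Laplace transforms of \eqref{Laplace ODE half line} are a priori defined and in which the bound is actually applied — and using $\cos\phi\le1$, this forces $g(r):=e^{E}r+e^{\Delta}r^{-1}\ge (e^{\sigma}/\alpha)+e^{E}+e^{\Delta}>e^{E}+e^{\Delta}$ for the modulus $r$ of \emph{each} of the two roots.

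A one-variable analysis of $g$ then closes the argument: $g$ is strictly convex on $(0,\infty)$, attains its minimum at $r=e^{(\Delta-E)/2}$, and $g(1)=g(e^{\Delta-E})=e^{E}+e^{\Delta}$; hence $\{r>0:g(r)>e^{E}+e^{\Delta}\}$ is the union of the interval below $\min\{1,e^{\Delta-E}\}$ and the interval above $\max\{1,e^{\Delta-E}\}$. Since both root moduli $r_{1},r_{2}$ lie in this set and $r_{1}r_{2}=e^{\Delta-E}$, a short case check (both below the lower threshold would give $r_{1}r_{2}<e^{\Delta-E}$ in the case $\Delta<E$, both above the upper threshold would give $r_{1}r_{2}>e^{\Delta-E}$ in the case $\Delta>E$, and the mixed situations are excluded symmetrically) forces one modulus to be $\le\min\{1,e^{\Delta-E}\}\le1$ and the other to be $\ge\max\{1,e^{\Delta-E}\}\ge1$. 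To see that the smaller modulus is carried by $\theta_{1}$, the ``$-$'' branch: the discriminant $\Omega(z)^{2}-4\alpha^{2}e^{E+\Delta}$ has no zero with $\Re z\ge0$ — its zeros $z=-e^{\sigma}-\alpha(e^{E/2}\mp e^{\Delta/2})^{2}$ are real and negative — so $\theta_{1},\theta_{2}$ from \eqref{teta1 and teta2} are analytic on the connected half-plane $\{\Re z\ge0\}$, and for $z$ real and large $\theta_{1}(z)\sim\alpha e^{\Delta}/\Omega(z)\to0$; by connectedness and the uniform gap between the two moduli just established, $\theta_{1}$ remains the root of smaller modulus throughout, giving $|\theta_{1}(z)|\le1\le|\theta_{2}(z)|$.

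The step I expect to be delicate is the last one: fixing the branch of the square root and the consistent labelling of $\theta_{1},\theta_{2}$, and being explicit that the modulus estimate genuinely uses $\Re z\ge0$. I would want to double-check how (or whether) the bound extends to all of $\mathbb C$ as stated: already at $z=-\Omega(0)$ one has $\Omega(z)=0$, so the two roots are $\pm i\,e^{(\Delta-E)/2}$, whose modulus exceeds $1$ when $\Delta>E$; thus either an additional hypothesis ($\Re z\ge0$, or $z$ outside the branch cut) is needed, or the statement is only invoked on the domain of definition of the $\hat n_{k}$ — which suffices for the subsequent contour-deformation arguments — and the proof should be phrased accordingly.
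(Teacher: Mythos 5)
Your proof is correct on the half-plane $\{\Re z\ge 0\}$ and takes a genuinely different route from the paper. The paper works directly on the closed-form expressions \eqref{teta1 and teta2}: it rewrites $\theta_2$ as $\frac{1+\beta}{2\beta}H(z)\bigl(1+\sqrt{1+\rho e^{i\psi}}\bigr)$ with $H(z)=\Omega(z)/(\alpha(1+\beta))$, bounds $|1+\sqrt{1+\rho e^{i\psi}}|$ from below under a smallness hypothesis on $\rho$, and then obtains $|\theta_1|\le1$ partly from the same Vieta identity $\theta_1\theta_2=e^{\Delta-E}$ that you use. Your argument replaces this branch-by-branch estimate with the symmetric quantity $g(r)=e^{E}r+e^{\Delta}r^{-1}$ evaluated at the modulus of an arbitrary root: it treats both roots at once, makes the role of $\Re z$ explicit, and the labelling step (the discriminant of \eqref{polinomio hom} vanishes only at the negative real points $z_1,z_2$ of \eqref{z_1 and z_2}, so $\theta_1,\theta_2$ are continuous on the connected set $\{\Re z\ge0\}$ and $\theta_1(z)\sim\alpha e^{\Delta}/\Omega(z)\to0$ for large real $z$) is sound.

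Your closing doubt is well founded, and is the one substantive point of divergence from the paper: the statement as written, for all $z\in\mathbb C$, is false whenever $\Delta\neq E$. At $z=z_1=-e^{\sigma}-\alpha(e^{E}+e^{\Delta})$ one has $\Omega(z_1)=0$, the two roots of \eqref{polinomio hom} are $\pm i e^{(\Delta-E)/2}$, and hence $|\theta_1(z_1)|>1$ if $\Delta>E$ while $|\theta_2(z_1)|<1$ if $\Delta<E$. Correspondingly, the paper's own estimate requires $\rho=4\beta\alpha^{2}/|\Omega(z)|^{2}\le1$, i.e.\ a positive lower bound on $|\Omega(z)|$, which fails near this zero, so the unrestricted claim cannot be rescued. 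This does not propagate into the rest of the paper: the lemma is only ever needed where the Laplace transforms of \eqref{Laplace ODE half line} are a priori finite, namely $\Re z>0$ (to discard the $\theta_2^{k}$ branch in Proposition \ref{prop:expression of st st}) and at $z=0$ (to get $\varphi(\sigma)<\varphi_2$ in Lemma \ref{lem:asympt F1}), and both are covered by your half-plane version. So your restriction to $\Re z\ge0$ is not a limitation of your method but the correct hypothesis under which the lemma holds.
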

\begin{proof}
    We start by proving that $|\theta_2(z) |\geq 1$. 
    To this end it is convenient to prove that if $\rho \geq R_0$ such that $R_0 \leq 1$ and $\psi \in (0, 2 \pi)$ then it holds that
    \begin{equation} \label{ineq complex}
  |  1+ \sqrt{1+\rho e^{i \psi }} | \geq 1+ \sqrt{1- R_0}. 
    \end{equation}
    To prove this we first of all notice that $\xi:= \sqrt{1+\rho e^{i \psi }}$ is such that $\Re(\xi) \geq \sqrt{1- R_0}$. Indeed, let $\xi=\xi_1+ i \xi_2 $ for $\xi_1, \xi_2 \in \mathbb R$. This implies that $1+ \rho \cos (\psi) + i \rho \sin(\psi) =  \xi_1^2- \xi_2^2 + 2 i \xi_1 \xi_2 $. Hence we deduce that $ \rho\sin (\psi) =  2 \xi_1 \xi_2 $ and $1+ \rho \cos (\psi) = \xi_1^2- \xi_2^2$. In particular this implies that
    \[
    \xi^2_1= 1+\rho \cos(\psi) + \frac{\rho^2 (\sin(\psi))^2 }{4 \xi_1^2} \geq 1-\rho \geq 1-R_0
    \]
    and the desired conclusion follows. 
    Using the fact that $\Re(\xi) \geq \sqrt{1- R_0} $ we obtain that
    \[
     |  1+ \sqrt{1+\rho e^{i \psi }} | \geq   |  1+ \sqrt{1-R_0}+ i \xi_2 | \geq    1+ \sqrt{1-R_0} .
    \]
  
    In order to apply \eqref{ineq complex} to prove that $|\theta_2 (z)|>1$ it is convenient to use the notation $\beta:= e^{E- \Delta }$ and $H(z):= \frac{\Omega(z)}{\alpha (1+\beta)} $. We deduce that
    \[
    \theta_2(z)= \frac{1+\beta}{2 \beta} H(z)  \left( 1+ \sqrt{1- \frac{4 \beta}{(1+\beta )^2 H(z)^2}} \right).
    \]
    
    Let us assume now that $\Delta \geq E$, hence $\beta \leq 1 $. 
  In this case,
  \[ 
  \rho = \left|\frac{4 \beta}{(1+\beta )^2 H^2}\right| \leq  \frac{4 \beta}{(1+\beta )^2 \left|H\right|^2} \leq  \frac{4 \beta \alpha^2 }{ \left|\Omega\right|^2} \leq \frac{4 }{1+\frac{2}{\beta} +\frac{1}{\beta^2}} \leq 1.  
  \] 
  Hence we choose $R_0=  \frac{4 }{1+\frac{2}{\beta} +\frac{1}{\beta^2}}$ and using \eqref{ineq complex} we deduce that $|\theta_2 (z)| \geq \frac{1+ \beta }{ 2 \beta } |H| \geq  1 $. 

  Assume now that $\beta \geq 1 $ because $E \geq \Delta$. Define $\gamma = \frac{1}{\beta } \leq 1 $. Notice that 
    \begin{align*}  \theta_2(z)= \frac{1+\gamma}{2 \gamma} H(z) \left( 1+ \sqrt{1- \frac{4 \gamma}{(1+\gamma)^2 H^2(z)}} \right). 
  \end{align*}
  Therefore we can repeat the argument above used to prove that $|\theta_2(z)| \geq 1$ when $\beta \leq 1 $. 

  Let us prove now that $| \theta_1 (z) | = \left| \frac{1+\beta}{2 \beta} H  \left( 1- \sqrt{1- \frac{4 \beta}{(1+\beta )^2 H^2}} \right) \right| \leq 1 $.
  Notice that $|\theta_1(z) \theta_2(z) | =\frac{2}{1+\beta} $. Assume now that $ \beta \geq 1 $. In this case we have that $|\theta_1 \theta_2 | \leq 1 $ and $|\theta_2 | \geq 1$. As a consequence we must have $|\theta_1| \leq 1 $. 
  Assume now that $\beta \leq 1 $. Let us consider as before $\gamma =\frac{1}{\beta } \geq 1$. 
  We can now write
  \[
  \theta_1 (z)  = \frac{1+\gamma}{2 \gamma} H  \left( 1- \sqrt{1- \frac{4 \gamma}{(1+\gamma )^2 H^2}} \right) 
  \] 
  and use the argument above to deduce that $|\theta_1 (z) |\leq 1 $. 
\end{proof}

\begin{remark}
    In this paper we use the following definition of square root function in the domain $\mathbb C \setminus (- \infty , 0 )$
    \[
    \sqrt{z}:= |z|^{1/2} e^{i \psi /2} , 
    \]
    where $z= |z| e^{i \psi} $  with  $\psi \in (- \pi, \pi]$. 
\end{remark}

The solution to \eqref{Laplace ODE half line} can be written as the sum of the particular solution and the homogeneous solution. Hence we have the following statement. 
\begin{proposition} \label{prop:expression of st st}
    Let $\hat{n}_S $ be given by \eqref{n_S roof}. Let $A$ be given by \eqref{A} and let $\Omega $ be defined as \eqref{Omega}. 
    Then the sequence of functions $\{ n_k\}_{k=0}^\infty$ defined as 
    \begin{equation} \label{formula entire line}
    \hat{n}_k (z)=A(z) \hat{n}_S  e^{- k E} (1+ B(z) \varphi(z)^k ), \quad k \in \mathbb N 
    \end{equation}
with  
\[
B(z)= -  \frac{ \alpha e^E (e^\Delta -1 ) }{z+e^\sigma + \alpha e^\Delta - \alpha \varphi(z) } \text{ and }
\varphi(z) =\frac{1}{2 \alpha  } \left(  \Omega(z) -  \sqrt{\Omega(z)^2 - 4 \alpha^2  e^{\Delta + E} } \right)  
\]
   is the solution to \eqref{ODE half line}.  
\end{proposition}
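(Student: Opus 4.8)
The plan is to identify the sequence $\{\hat n_k\}_{k\in\mathbb N}$ as the unique decaying solution of the second-order linear difference equation \eqref{Laplace difference}, with its single free constant fixed by the equation for $\hat n_0$ in \eqref{Laplace ODE half line}. By the lemma producing the particular solution \eqref{Laplace part} and the lemma producing the homogeneous solutions \eqref{teta1 and teta2}, every solution of \eqref{Laplace difference} has the form
\[
\hat n_k(z)=A(z)\,\hat n_S(z)\,e^{-kE}+C_1(z)\,\theta_1(z)^k+C_2(z)\,\theta_2(z)^k ,\qquad k\in\mathbb N .
\]
Comparing \eqref{teta1 and teta2} with the expression for $\varphi$ given in the statement yields the algebraic identity $\theta_1(z)=e^{-E}\varphi(z)$, hence $C_1(z)\theta_1(z)^k=C_1(z)e^{-kE}\varphi(z)^k$. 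Since $\hat n_k(z)=\int_0^\infty e^{-zt}n_k(t)\,dt$ and $\sum_{k\ge0}n_k(t)=M(t)\le1$, for $\Re z>0$ the sequence $(\hat n_k(z))_k$ is summable, hence bounded, in $k$; as the particular part tends to $0$, and $|\theta_1(z)|\le1$, $|\theta_2(z)|\ge1$ by Lemma \ref{lem: theta_1 small teta_2 large}, with $|\theta_2(z)|>1$ strictly for $\Re z$ large, one gets $C_2\equiv0$ first for $\Re z$ large and then everywhere by analyticity of $C_2$. Writing $C_1(z)=A(z)\hat n_S(z)B(z)$ then reproduces exactly \eqref{formula entire line}, with $B(z)$ still to be determined.

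To pin down $B(z)$ I would substitute $\hat n_0=A\hat n_S(1+B)$ and $\hat n_1=A\hat n_S e^{-E}(1+B\varphi)$ into the equation $(e^\sigma+z+\alpha e^\Delta)\hat n_0=\hat n_S+\alpha e^E\hat n_1$ of \eqref{Laplace ODE half line}. Using $e^\sigma+z+\alpha e^\Delta=\Omega(z)-\alpha e^E$ together with the identity $A(z)^{-1}=\Omega(z)-\alpha(1+e^{E+\Delta})$ from \eqref{A}, the coefficient multiplying $A\hat n_S$ collapses to $\alpha e^E(e^\Delta-1)$, and the remaining scalar identity
\[
\alpha e^E(e^\Delta-1)\,A\hat n_S+\bigl(z+e^\sigma+\alpha e^\Delta-\alpha\varphi(z)\bigr)A\hat n_S\,B(z)=0
\]
gives $B(z)=-\alpha e^E(e^\Delta-1)\big/\bigl(z+e^\sigma+\alpha e^\Delta-\alpha\varphi(z)\bigr)$, as claimed.

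It remains to check the equation for $\hat n_S$ in \eqref{Laplace ODE half line}. Rather than inserting the closed form of $\hat n_k$, I would sum the recursion \eqref{Laplace difference} over $k\ge1$: with $\Sigma_0:=\sum_{k\ge0}\hat n_k$, using $\sum_{k\ge1}e^{-kE}=1/(e^E-1)$ and the $\hat n_0$-equation to eliminate $\alpha e^E\hat n_1$, the sums telescope to $(z+e^\sigma)\Sigma_0=\tfrac{e^E}{e^E-1}\hat n_S$. Substituting this into the first equation of \eqref{Laplace ODE half line} and simplifying (with $\tfrac1{1-e^{-E}}=\tfrac{e^E}{e^E-1}$ and the definition \eqref{zS} of $z_S$) reduces it to $\hat n_S(z)=(z+e^\sigma)/\bigl(z(z-z_S)\bigr)$, which holds by the hypothesis \eqref{n_S roof}; hence all of \eqref{Laplace ODE half line} is satisfied. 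I expect the only real subtleties to be (i) making the elimination of the $\theta_2^k$ mode rigorous uniformly in $z$, including on the branch cut of the square root and near the exceptional points $z_A$ and $z_S$, and (ii) justifying the termwise summation of $\sum_k\hat n_k$ and its rearrangement; the identity $\theta_1=e^{-E}\varphi$, the computation of $B(z)$, and the telescoping are routine algebra on top of the lemmas already established.
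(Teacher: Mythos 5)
Your argument is correct and follows essentially the same route as the paper's proof: decompose the solution of \eqref{Laplace difference} into the particular solution plus the two homogeneous modes $\theta_1^k,\theta_2^k$, discard the $\theta_2^k$ mode using summability of $(\hat n_k)_k$ together with Lemma \ref{lem: theta_1 small teta_2 large}, and fix the remaining coefficient from the $\hat n_0$-equation, which yields exactly the stated $B(z)$ (your identity $\theta_1=e^{-E}\varphi$ matches the paper's $\alpha e^E\theta_1=\alpha\varphi$ in the denominator of $C(z)$). Your additional verification of the $\hat n_S$-equation by summing the recursion is a harmless extra, since the paper establishes \eqref{n_S roof} in a separate preceding lemma via the conservation law $z\hat M(z)=1$.
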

\begin{proof}
    The solution to \eqref{ODE half line} is given by the linear combination of the particular solution and of the homogeneous solutions. 
    In particular, the solution must have the following form 
\begin{equation} \label{solution Laplace}
\hat{n}_k (z)= \hat{n}_k^{(p)}  (z) + C (z) \theta_1(z)^k(z) + D (z) \theta_2(z)^k= A(z) e^{- k E}  \hat{n}_S  (z) + C (z)  \theta_1(z)^k+ D(z)  \theta_2(z)^k, \quad k \in \mathbb N .  
\end{equation}
Recall that the solution to \eqref{ODE half line detail} are such that $ \hat{M}(z)= \hat{n}_S(z) + \sum_{k=0}^\infty \hat{n}_k(z) <\infty $. Since $|\theta_2(z)|\geq 1 $ for every $ z \in \mathcal C$ we must take $D(z)=0$ for every $z \in \mathbb C$. We now compute  $C(z)$. 
To this end we notice that the equation for $\hat{n}_0 $ combined with \eqref{solution Laplace} implies that 
\[ 
(e^\sigma + z + \alpha e^\Delta ) (A \hat{n}_S(z) + C(z) ) = \hat{n}_S(z)  + \alpha e^E  (A e^{- E} \hat{n}_S (z) + C(z) \theta_1). 
\]
As a consequence we deduce that
\begin{equation} \label{C} 
C(z) =\frac{\hat{n}_S(z) (1+ A (z) (\alpha - (e^\sigma + z + \alpha e^\Delta)))}{z+e^\sigma + \alpha e^\Delta - \alpha e^E \theta_1(z) } = -  \frac{ \alpha A(z) e^E (e^\Delta -1 )\hat{n}_S (z) }{z+e^\sigma + \alpha e^\Delta - \alpha e^E \theta_1(z) }. 
\end{equation}
We then obtain \eqref{formula entire line}. 
\end{proof}

\begin{lemma}
    The function $\hat{n}_k$ is analytic in $\mathbb C \setminus \left(  \{ z_S, 0  \} \cup [z_1, z_2 ] \right) $ where
    \begin{equation} \label{z_1 and z_2}
        z_1 := - e^\sigma - \alpha ( e^E + e^\Delta ), \quad  z_2:=  - e^\sigma - \alpha ( e^{E/2} - e^{\Delta/2} )^2
    \end{equation}
    and where we recall that $z_S $ is defined in \eqref{zS}. 
\end{lemma}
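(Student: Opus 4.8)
The plan is to start from the explicit formula \eqref{formula entire line} for $\hat n_k$ provided by Proposition \ref{prop:expression of st st} and to locate all of its possible singularities. Since $\hat n_S$ is rational — by \eqref{n_S roof} it has simple poles only at $z=0$ and $z=z_S$ and a simple zero at $z=-e^\sigma$ — and since $A$, $\varphi$ and $B$ are all built out of $\Omega(z)$ and the square root $\sqrt{\Omega(z)^2-4\alpha^2 e^{\Delta+E}}$, the only candidate singular points of $\hat n_k$ are: the poles $0,z_S$ of $\hat n_S$; the pole $z_A$ of $A$ (see \eqref{A}); the zeros of the denominator $z+e^\sigma+\alpha e^\Delta-\alpha\varphi(z)$ of $B$; and the points where the radicand $\Omega(z)^2-4\alpha^2e^{\Delta+E}$ lies in $(-\infty,0]$, so that the square root, and hence $\varphi$ and $B$, fails to be analytic. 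I would then dispose of each of these in turn, showing that all of them except $0,z_S$ and the branch segment $[z_1,z_2]$ are in fact removable.

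For the pole of $A$: one computes $\Omega(z_A)=\alpha(1+e^{E+\Delta})$, whence $\Omega(z_A)^2-4\alpha^2e^{\Delta+E}=\alpha^2(e^{E+\Delta}-1)^2>0$, so $z_A$ lies off the branch set and $\varphi,B$ are analytic near it; moreover $\varphi(z_A)=1$ and $B(z_A)=-1$, so the factor $1+B(z)\varphi(z)^k$ vanishes at $z=z_A$ for every $k$, which cancels the simple pole of $A(z)$ in \eqref{formula entire line}. For the pole coming from $B$: using $\alpha\varphi(z)=\alpha e^E\theta_1(z)$ together with $\theta_1(z)+\theta_2(z)=\Omega(z)/(\alpha e^E)$, one rewrites the denominator of $B$ as $\alpha e^E(\theta_2(z)-1)$, which vanishes exactly where $1$ is a (necessarily the larger-modulus) root of \eqref{polinomio hom}, i.e.\ only at $z=-e^\sigma$ (and only when $\Delta\le E$). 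Since $\hat n_S$ has a simple zero at $-e^\sigma$ and $-e^\sigma\notin\{0,z_S\}$, this zero cancels the at-most-simple pole of $B$, so $\hat n_k$ is analytic at $-e^\sigma$.

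The remaining, and main, point is the branch-cut analysis. I would factor $\Omega(z)^2-4\alpha^2e^{\Delta+E}=(z-z_1)(z-z_2)$ with $z_1,z_2$ the two roots given in \eqref{z_1 and z_2}; these are the only branch points of the square root, so with the cut taken along the segment $[z_1,z_2]$ the root $\theta_1$ singled out in Lemma \ref{lem: theta_1 small teta_2 large} — the one with $|\theta_1|\le1$, which is the branch obtained by continuation from $\Re z\to+\infty$ (where $\theta_1\sim\alpha e^\Delta/\Omega(z)\to0$) — is single-valued and analytic on $\mathbb C\setminus[z_1,z_2]$. Consequently $\varphi=e^E\theta_1$, $B=-(e^\Delta-1)/(\theta_2-1)$ and hence $\hat n_k$ are analytic there, and combining this with the two previous paragraphs the only surviving singularities of $\hat n_k$ are $0$, $z_S$ and the segment $[z_1,z_2]$.

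I expect this last step to be the delicate part: one must check that the branch of $\sqrt{\Omega(z)^2-4\alpha^2e^{\Delta+E}}$ forced by the Laplace-transform representation (analyticity of $\hat n_k$ for $\Re z$ large) is precisely the one with cut along $[z_1,z_2]$, and that $\theta_1$ picks up no monodromy around any loop in $\mathbb C\setminus[z_1,z_2]$ — equivalently, that encircling a branch point $z_1$ or $z_2$ genuinely interchanges $\theta_1\leftrightarrow\theta_2$ (so it is a true obstruction) while no other loop produces one. The cancellations at $z_A$ and at $-e^\sigma$, by contrast, reduce to short computations once the identities $\varphi(z_A)=1$, $B(z_A)=-1$ and the simplification of $B$'s denominator to $\alpha e^E(\theta_2(z)-1)$ are established.
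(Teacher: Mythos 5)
Your overall strategy is the same as the paper's: read the candidate singularities off the explicit formula \eqref{formula entire line}, cancel the pole of $A$ at $z_A$ using $\varphi(z_A)=1$ and $B(z_A)=-1$, cancel the possible pole of $B$ at $z=-e^\sigma$ against the zero of $\hat{n}_S$ there, and attribute whatever remains to the square root. Your handling of $z_A$ and of $-e^\sigma$ (in particular rewriting the denominator of $B$ as $\alpha e^E(\theta_2(z)-1)$ and locating its zero) is correct and in fact more detailed than the paper's.

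The branch-cut step, however, contains a concrete error. Since $\Omega(z)=z+e^\sigma+\alpha(e^E+e^\Delta)=z-z_1$, the radicand is $\Omega(z)^2-4\alpha^2e^{\Delta+E}=(z-z_1)^2-4\alpha^2e^{\Delta+E}$, whose zeros are $z_1\pm 2\alpha e^{(E+\Delta)/2}$. The right zero is indeed $z_2=-e^\sigma-\alpha(e^{E/2}-e^{\Delta/2})^2$, but the left zero is $-e^\sigma-\alpha(e^{E/2}+e^{\Delta/2})^2$, not $z_1$: at $z_1$ one has $\Omega(z_1)=0$, so the radicand equals $-4\alpha^2e^{\Delta+E}<0$ and $z_1$ is the midpoint of the two branch points, not a branch point. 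Hence your factorization $\Omega(z)^2-4\alpha^2e^{\Delta+E}=(z-z_1)(z-z_2)$ is false, and the monodromy argument you build on it (loops around $z_1$ or $z_2$ exchanging $\theta_1\leftrightarrow\theta_2$) is anchored at the wrong points; on the real axis the radicand is $\le 0$ on the symmetric interval $[z_1-2\alpha e^{(E+\Delta)/2},\,z_2]$, which strictly contains $[z_1,z_2]$. In addition, with the paper's principal-branch convention the radicand is negative real on the whole vertical line $\Re z=z_1$ (where $\Omega$ is purely imaginary), so pinning the singular set down to a real segment requires either working with the analytic branch of $\theta_1$ obtained by continuation from $\Re z\to+\infty$ or exhibiting cancellation of these apparent jumps in $\hat{n}_k$ — exactly the delicate point you flag but, having misplaced the branch points, do not settle. (To be fair, the paper's own proof asserts that the radicand is $\le 0$ on the reals precisely on $[z_1,z_2]$, which has the same inaccuracy; in either version this step needs repair, e.g.\ the cut should join $-e^\sigma-\alpha(e^{E/2}+e^{\Delta/2})^2$ to $z_2$, or further cancellations must be shown.)
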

\begin{proof}
    Recall that $z_S$ is a pole of $\hat{n}_S $.
    Since we have that $A(z_S) \left( 1 + B(z_S) \varphi(z_S)^k  \right) \neq 0 $, then we deduce that $z_S $ is a pole of $\hat{n}_k$. 

    Notice now that the function $A$ has one pole in $z_A $ given by \eqref{pole of A}.
    However notice that $1 + B(z_A) \varphi(z_A)^k =0$. Indeed we have that 
    $\Omega (z_A)= \alpha (e^{E+\Delta} + 1 ) $ and therefore $\varphi(z_A)=1$. Moreover we also have that $B(z_A)=-1 $ hence $ B(z_A) \varphi(z_A)^k =-1 $.
    As a consequence we have that $z_A$ is not a pole of $\hat{n}_k$. 

    Moreover we can prove that if the function $B$ has a pole, then the pole is in $z_B= - e^\sigma $. However notice that $\hat{n}_S (z_B)=0$.
    As a consequence $z_B $ is not a pole of $\hat{n}_k$. 
    Finally notice that the function $\varphi$ has a branch cut when $z \in \mathbb R$ is such that $ \Omega(z)^2 - 4 \alpha^2 e^{E+ \Delta} \leq 0$. 
    Since we have that $ \Omega(z)^2 - 4 \alpha^2 e^{E+ \Delta} \leq 0$ if and only if $[z_1, z_2]$ the desired conclusion follows. 
\end{proof}

We now aim at studying the behaviour of $n_k(t) $ for large values of $t$ and large values of $k$. To this end we study the behaviour of $n_k(t)$ when $k \approx t $ as $t \rightarrow \infty$. More precisely we will study the behaviour of $n_{ \lfloor 2 \alpha e^{\frac{E+ \Delta}{2}} \theta t \rfloor } (t)$ where $\theta \geq 0$. 

\begin{theorem} \label{theo:half line asymptotics delta subcrit}
Let us define $\tau:= 2 \alpha e^{\frac{E+ \Delta }{2}} t$ and let $k:=\lfloor \theta \tau \rfloor$ for  $\theta \geq 0$.
\\  Assume that $\Delta < \Delta_c $. 
Then 
    \begin{equation} \label{asympt half subc}
    \lim_{t \to \infty } \frac{ n_{k} (t) }{ e^{-k E} } = A(0) \overline n_S.
    \end{equation}
\end{theorem}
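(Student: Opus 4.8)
The plan is to invert the Laplace transforms of Proposition~\ref{prop:expression of st st} and to split $n_k(t)e^{kE}$ into a $k$-independent ``quasi-stationary'' part, which converges to $A(0)\overline n_S$, and a ``homogeneous'' remainder built from $\varphi(z)^k$, which vanishes as $k\to\infty$ precisely because $\Delta<\Delta_c$. For any $c>0$, Bromwich inversion (legitimate since $|\hat n_k(z)|\lesssim|z|^{-2}$ on vertical lines, as $\varphi(z),B(z)\to0$ when $|z|\to\infty$ in \eqref{formula entire line}) gives $n_k(t)e^{kE}=I(t)+J_k(t)$ with
\[
I(t):=\frac{1}{2\pi i}\int_{c-i\infty}^{c+i\infty}e^{zt}A(z)\hat n_S(z)\,dz,\qquad J_k(t):=\frac{1}{2\pi i}\int_{c-i\infty}^{c+i\infty}e^{zt}A(z)\hat n_S(z)B(z)\varphi(z)^k\,dz .
\]

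For $I(t)$ I would use that $A(z)\hat n_S(z)=\dfrac{z+e^\sigma}{(z-z_A)\,z\,(z-z_S)}$ is rational with three simple poles, all strictly negative in the subcritical regime: $z_S<0$ always, while $z_A<0$ holds \emph{exactly} because $\Delta<\Delta_c$, by \eqref{Delta_c} and \eqref{pole of A}. Summing residues, $I(t)=\dfrac{e^\sigma}{z_Az_S}+c_Ae^{z_At}+c_Se^{z_St}\to\dfrac{e^\sigma}{z_Az_S}$ as $t\to\infty$, and since $A(0)=-1/z_A$ and $\overline n_S=\mathrm{Res}_{z=0}\hat n_S(z)=-e^\sigma/z_S$ (this equals \eqref{overline nS}), the limit is $A(0)\overline n_S$.

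The substantive step is $J_k(t)\to0$. The integrand of $J_k$ is analytic on $\mathbb C\setminus(\{0,z_S,z_A\}\cup[z_1,z_2])$ — the only possible pole $z_B=-e^\sigma$ of $B$ is cancelled by the zero of $\hat n_S$ there — and $|A(z)\hat n_S(z)B(z)|\lesssim|z|^{-3}$ for large $|z|$. Now in the subcritical case $\varphi(\sigma)=\varphi(0)<1$; moreover $\varphi$ is analytic on $\mathbb C\setminus[z_1,z_2]$ with $\varphi(z)\to0$ as $|z|\to\infty$, and a direct computation from $\varphi(z)=\frac{1}{2\alpha}\bigl(\Omega(z)-\sqrt{\Omega(z)^2-4\alpha^2e^{\Delta+E}}\bigr)$ shows that $y\mapsto|\varphi(iy)|$ is nonincreasing on $[0,\infty)$, so $\sup_{\Re z=0}|\varphi|=\varphi(\sigma)<1$. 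By continuity (and $|\varphi(z)|\to0$ at infinity) I may fix $\delta>0$ with $-\delta>\max\{z_S,z_A,z_2\}$ and $\rho:=\sup_{\Re z=-\delta}|\varphi(z)|<1$. Shifting the contour of $J_k$ from $\Re z=c$ to $\Re z=-\delta$ crosses only the simple pole at $z=0$ (the horizontal closing segments vanish by the decay estimates above), so
\[
J_k(t)=A(0)\,\overline n_S\,B(0)\,\varphi(\sigma)^k+\frac{1}{2\pi i}\int_{-\delta-i\infty}^{-\delta+i\infty}e^{zt}A(z)\hat n_S(z)B(z)\varphi(z)^k\,dz ,
\]
and the remaining integral is bounded in modulus by $\rho^k e^{-\delta t}\cdot\frac{1}{2\pi}\int_{\R}|A(-\delta+iy)\,\hat n_S(-\delta+iy)\,B(-\delta+iy)|\,dy=:C\rho^ke^{-\delta t}\le C\rho^k$ for $t\ge0$. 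Hence $|J_k(t)|\le|A(0)\overline n_S B(0)|\,\varphi(\sigma)^k+C\rho^k$, and since $\theta>0$ forces $k=\lfloor\theta\tau\rfloor\to\infty$ as $t\to\infty$ while $\varphi(\sigma),\rho<1$, we get $J_k(t)\to0$. Together with the previous step this yields $n_k(t)e^{kE}=I(t)+J_k(t)\to A(0)\overline n_S$, which is \eqref{asympt half subc} (and matches the formal prediction \eqref{k large N-k large delta small}).

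The main obstacle — and the only place where $\Delta<\Delta_c$ enters — is the bound $\sup_{\Re z=-\delta}|\varphi(z)|<1$: one must know that the homogeneous mode $\varphi(z)^k$ stays uniformly subunit along a line slightly to the left of the imaginary axis, for which the monotonicity of $|\varphi(iy)|$ together with the subcriticality inequality $\varphi(\sigma)<1$ is exactly what is needed. In the supercritical case $\varphi(\sigma)>1$ this fails, $\varphi(z)^k$ becomes the leading contribution, and the asymptotics of Theorem~\ref{theo:half line asymptotics delta supercrit} are genuinely different; note also that the precise speed $\tau=2\alpha e^{(E+\Delta)/2}t$ plays no role above — the argument works for any $k=k(t)\to\infty$ — and matters only in the supercritical analysis, where $e^{zt}\varphi(z)^k$ must be treated by steepest descent.
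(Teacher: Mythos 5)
Your proof is correct, but it takes a genuinely different route from the paper's. The paper inverts the same decomposition $n_k e^{kE}=\ell_k+h_k$, but then handles $h_k$ by a change of variables $w=\tfrac{1}{2\alpha}e^{-(E+\Delta)/2}(z+e^\sigma+\alpha(e^E+e^\Delta))$ followed by a steepest-descent analysis at the saddle $w_\theta=\sqrt{1+\theta^2}$ of $\Phi(w)=w-\theta\log(w+\sqrt{w^2-1})$; the subcritical conclusion is then obtained by checking that the exponent $w_0$ coming from $\ell_k$ dominates all the exponents $\Phi(w_A)+\theta\tfrac{E+\Delta}{2}$, $\Phi(w_0)+\theta\tfrac{E+\Delta}{2}$, $\Phi(w_\theta)+\theta\tfrac{E+\Delta}{2}$ arising in $h_k$. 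You instead avoid steepest descent entirely: a contour shift to $\Re z=-\delta$ plus the uniform bound $\sup_{\Re z=-\delta}|\varphi|<1$ gives $|J_k(t)|\lesssim \varphi(\sigma)^k+\rho^k\to0$ for \emph{any} $k(t)\to\infty$, which is cleaner and, as you note, decouples the result from the particular speed $k=\lfloor\theta\tau\rfloor$ (the paper's machinery is really there because it is reused verbatim in the supercritical and critical theorems, where the saddle contribution is leading order). Your identification of where subcriticality enters ($\varphi(\sigma)<1\iff z_A<0\iff\Delta<\Delta_c$) matches the paper's. Two small points you should make explicit: (i) the paper's Lemma \ref{lem: theta_1 small teta_2 large} only gives $|\theta_1(z)|\le1$, i.e.\ $|\varphi(z)|\le e^{E}$, so your bound $\sup_{\Re z=-\delta}|\varphi|<1$ genuinely needs the extra monotonicity claim; it does hold, e.g.\ because writing $\varphi(z)=e^{(E+\Delta)/2}/(u+\sqrt{u^2-1})$ with $u=\Omega(z)/(2\alpha e^{(E+\Delta)/2})$, the level curves of $|u+\sqrt{u^2-1}|$ are nested ellipses with foci $\pm1$, so $|u+\sqrt{u^2-1}|$ increases along the vertical line $\Re u=u_\delta>1$ (valid as soon as $-\delta>z_2$), whence $\sup_{\Re z=-\delta}|\varphi|=\varphi(-\delta)\to\varphi(\sigma)<1$ as $\delta\to0$ — but as written this is an assertion, not a computation. (ii) Your argument, like the paper's, really requires $\theta>0$; for $\theta=0$ the homogeneous term does not vanish (the limit would pick up the factor $1+B(0)$), but that caveat is already implicit in the theorem as stated.
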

\begin{proof}
By the inverse Laplace transform formula we have that 
\[
n_k(t)= \frac{1}{2 \pi i } e^{- k E } \int_{\delta - i \infty }^{\delta + i \infty } e^{ zt } A(z) \hat{n}_S(z) \left( 1+ B(z)  \varphi(z)^k  \right) dz   
\]
where $\delta >   0$. Moreover recall that since we assume that $\Delta < \Delta_c$ we have that  $0> z_A $. 
Notice that $n_k=\ell_k+ h_k$ where  
\[
\ell_k (t):= \frac{1}{2 \pi i } e^{- k E } \int_{\delta - i \infty }^{\delta + i \infty } e^{ zt } A(z) \hat{n}_S(z)  dz
\]
and 
\begin{equation}\label{def:h_k}
h_k (t):= \frac{1}{2 \pi i } e^{- k E } \int_{\delta - i \infty }^{\delta + i \infty } e^{ zt } A(z) \hat{n}_S(z) B(z) \varphi(z)^k   dz
\end{equation}
Let us compute $\ell_k$. To this end notice that
\begin{align*}
\ell_k(t)=     \frac{1}{2 \pi i } e^{- k E } \int_{\delta - i \infty }^{\delta + i \infty } e^{ zt } A(z) \hat{n}_S(z)  dz=   \frac{1}{2 \pi i } e^{- k E } \int_{C} e^{ zt } A(z) \hat{n}_S(z)  dz
\end{align*}
where $C$ is a  closed contour surrounding $z_A, z_S $ and $ 0 $.
Since the function $ z \mapsto  A(z) \hat{n}_S(z) $ is a meromorphic function we use Cauchy's residue theorem and deduce that 
\[ 
\ell_k(t)= e^{- k E } \left( A(0) \overline {n}_S + e^{ z_S t } A(z_S ) \frac{1}{z_S} +  e^{ z_A t } \hat{n}_S(z_A ) \right). 
\]
Notice that since $z_A , z_S <0 $ the dominant term in $ \ell_k(t)$ as $t \rightarrow \infty $ is $ e^{- k E } A(0) \overline {n}_S $. Hence
\[
\ell_{ \lfloor \tau \theta \rfloor } \left( \frac{\tau }{ 2 \alpha e^{(E+\Delta)/2} }\right)  \sim  e^{- \lfloor  \tau \theta \rfloor E } A(0) \overline {n}_S  \text{ as } \tau \to \infty. 
\]

Let us now compute $h_k$. To this end it is convenient to perform the change of variables
\[
w:= \frac{1}{2 \alpha}  e^{ - \frac{E+ \Delta}{2}} \left( z+ e^\sigma + \alpha (e^{E} + e^{\Delta} )  \right).
\]
in the integral \eqref{def:h_k}. 
This change of variables maps $0 $ in $w_0$, $z_S $ in $w_S $, $z_A $ to $w_A$, $z_1$ to $w_1 $ and $z_2 $ to $w_2 $ where  
\begin{equation} \label{poles in new variables}
w_0:=\frac{e^{ \sigma - \frac{E+ \Delta}{2}}}{2 \alpha}    + \cosh\left( \frac{E- \Delta}{2}\right), \quad  w_S := -    \frac{e^{ - \frac{E+ \Delta}{2}}}{2 \alpha(1- e^{-E})}+   \cosh\left( \frac{E- \Delta}{2}\right)
\end{equation}
and 
\[
w_A:= \cosh \left( \frac{E+ \Delta }{2}\right), \ w_1=0, \ w_2=1. 
\] 
Using the fact that 
\[
\varphi(z)= \frac{e^{(E+\Delta )/2 }}{w+\sqrt{w^2-1 }} \  \text{ and } \ B(z)= -   \frac{ 2 e^{E/2} \sinh(\Delta/2)}{ w+ \sqrt{w^2-1}- e^{(E-\Delta)/2}} 
\] 
as well as the fact that $\tau = 2 \alpha e^{(E+\Delta )/2} t$ we deduce that 
\begin{align*}
h_k(t)&=  \frac{1}{2 \pi i } e^{- k E } \int_{\delta - i \infty }^{\delta + i \infty } e^{ zt } A(z) \hat{n}_S(z) B(z) \varphi(z)^k dz\\
&= \frac{C(\alpha, E, \Delta)}{2 \pi i } e^{k \frac{\Delta - E }{2}}e^{- w_0 \tau }\int_{\tilde{\delta} - i \infty }^{\tilde{\delta} + i \infty } e^{ w \tau  } a(w) \tilde{n}_S(w) \frac{ \left(  \frac{1}{w + \sqrt{w^2-1}} \right)^k}{ w+ \sqrt{w^2-1}- e^{(E-\Delta)/2}} dw
\end{align*}
where $ \tilde{\delta} =  \frac{1}{2 \alpha}  e^{ - \frac{E+ \Delta}{2}} \left( \delta + e^\sigma + \alpha (e^{E} + e^{\Delta} )  \right)$, $
C(\alpha, E, \Delta)= -4 \alpha  e^{ \Delta/2 + E } \sinh(\Delta/2)$,
and 
\begin{equation} \label{a}
A(z)=a(w):= \frac{(w-w_A)^{-1}}{ 2 \alpha e^{(E+\Delta)/2 } }  \ \text{ and }  \ \hat{n}_S(z)= \tilde{n}_S (w): = \frac{\left(w- \cosh\left( \frac{E- \Delta }{2} \right) \right) (w-w_0)^{-1} (w-w_S)^{-1}}{2\alpha e^{(E+\Delta )/2 }} . 
\end{equation}
Notice that, as expected, the function $f$ defined as 
\[
f(w):= \frac{ e^{ w \tau  } a(w) \tilde{n}_S(w)}{ w+ \sqrt{w^2-1}- e^{(E-\Delta)/2}} \left(  \frac{1}{w + \sqrt{w^2-1}} \right)^k. 
\]
is holomorphic in $\mathbb C \setminus \left( \{ w_S, w_0, w_A   \} \cup [0, 1 ] \right) $. 
We now compute
\[
g_k (\tau):= \int_{\tilde{\delta} - i \infty }^{\tilde{\delta} + i \infty } f(w)  dw.
\]
Notice that 
\begin{align*}
g_k (\tau)=& \int_{\tilde{\delta} - i \infty }^{\tilde{\delta}  + i \infty } f(w)  dw =  \int_{\gamma} f(w)  dw  -  \sum_{j=1}^3 \int_{\gamma_j} f(w)   dw, 
\end{align*} 
where $\gamma = \cup_{j=1}^3 \gamma_j \cup ( \tilde{\delta} - i \infty , \tilde{\delta} + i \infty) $ and where 
$\gamma_1=(- r + i \infty , \tilde{\delta} + i \infty ) $, $\gamma_3 = (- r - i \infty , \tilde{\delta} - i \infty )$ and where $\gamma_2 $ is a contour that connects $- r -i\infty  $ with $-r + i \infty  $ passing by $ w_\theta := \sqrt{ 1+\theta^2 }$ in such a way that the segment $[0,1] $ remains outside $\gamma $. 
We can  compute the first integral. Indeed
\begin{align*}
 \int_{\gamma}f(w) dw=  2 \pi i \sum_{w_j \in \Gamma } \operatorname{Res}_{ w= w_j  } f(w).
\end{align*} 
where $\Gamma $ is the set of the poles of $f$ that are in the interior of the closed curve $\gamma$. 
Moreover, for $j=1,3 $ we have that 
\[   \int_{\gamma_j} f(w)  dw=0. 
\]
Therefore in order to obtain $g_k$ we only need to compute the integral of $f $ on $\gamma_2$. 
Notice that by the definition of $f$ we have that, if $k =\theta \tau $, then
\begin{align} \label{f}
f(w) &= a(w) \tilde{n}_S(w)  e^{ w \tau - k \log(w + \sqrt{w^2-1}) }\frac{1}{ w+ \sqrt{w^2-1}- e^{(E-\Delta)/2}} \nonumber \\
&=  a(w) \tilde{n}_S(w)  e^{\tau \Phi (w)  } \frac{1}{  w+ \sqrt{w^2-1}- e^{(E-\Delta)/2}} , 
\end{align} 
with \begin{equation} \label{Phi}
\Phi(w):= w - \theta  \log(w + \sqrt{w^2-1}). 
\end{equation}
Notice now that the function $\Phi $ has a minimum in $w_\theta = \sqrt{1+\theta^2}$. Indeed, $\Phi'(w_\theta )= 1- \frac{\theta }{ \sqrt{w_\theta^2-1 }} =0$. 
Moreover notice that 
\[
\Phi''(w )= \theta \frac{ w}{ (w^2-1) \sqrt{w^2-1 }}, 
\]
hence $ \Phi''(w_\theta)= \frac{\sqrt{1+\theta^2} }{ \theta^2 } > 0$. 

Notice that $\Phi(w_\theta )= \sqrt{1+ \theta^2} - \theta \log(w_\theta+\theta ) \in \mathbb R $. 
We deform the contour $\gamma_2 $ in such a way that $\gamma_2 = \gamma_2^\prime \cup \gamma_2^{\prime \prime}  \cup \gamma_2^{\prime \prime \prime} $ is such  that every $w \in \gamma_2^{\prime  } $ is of the form $w =w_\theta + i v $ for some $v \in \mathbb R$  and $\gamma_2^{\prime \prime}$ and $ \gamma_2^{\prime \prime \prime} $ connect respectively $\gamma_2^\prime $ with  $- r -i\infty  $ and with $-r + i \infty $.  
Then, using Watson's Lemma (see \cite[Section 6.4]{bender2013advanced}) we deduce that
\begin{align*}
 \int_{\gamma_2}  f(w) dw &= \int_{\gamma_2}  a(w) \tilde{n}_S(w)  e^{ \tau \Phi(w) }\frac{1}{w+ \sqrt{w^2-1}- e^{(E-\Delta)/2}} dw \\
&\sim  \int_{\gamma_2^\prime}  a(w) \tilde{n}_S(w)  e^{ \tau \Phi(w) }\frac{1}{w+ \sqrt{w^2-1}- e^{(E- \Delta)/2}} dw  \text{ as } \tau \rightarrow \infty. 
\end{align*}
By Taylor expansion we have that for $w= \sqrt{1+\theta^2 } +i  v $ it holds that
\[ 
\Phi (w) \sim \Phi (w_\theta ) - \frac{1}{2} \Phi^{\prime \prime } (w_\theta ) v^2 \ \text{ as } v \to 0. 
\]
As a consequence we deduce that
\begin{align*}
&  \int_{\gamma_2}  a(w)  \tilde{n}_S(w)  e^{ \tau \Phi(w) }\frac{1}{ w+ \sqrt{w^2-1}- e^{(E-\Delta)/2}} dw \\
&\sim a(w_\theta ) \tilde{n}_S(w_\theta ) \frac{  e^{ \tau \Phi(w_\theta ) }}{\theta+ \sqrt{1+ \theta^2 } - e^{(E-\Delta)/2}} i  \int_{- \infty }^\infty  e^{- \frac{\sqrt{1+\theta^2} }{ 2 \theta^2 } \tau v^2 } dv \\
&
= \sqrt{\pi} i  a(w_\theta ) \tilde{n}_S(w_\theta ) \frac{  \theta  \sqrt{2}}{(\theta+ \sqrt{1+ \theta^2 } - e^{(E-\Delta)/2}) (1+\theta^2)^{1/4}} \frac{ e^{ \tau \Phi(w_\theta ) }}{\sqrt{\tau}} \text{ as } \tau \to \infty. 
\end{align*}

We conclude that 
\[
g_{\tau \theta } (\tau)  \sim \frac{1}{2 \pi i }\sum_{w_i \in \Gamma } \operatorname{Res}_{ w= w_i } f(w)-  \frac{    \sqrt{2\pi} i \theta a(w_\theta ) \tilde{n}_S(w_\theta )}{(\theta+ \sqrt{1+ \theta^2 } - e^{(E-\Delta)/2}) (1+\theta^2)^{1/4}} \frac{ e^{ \tau \Phi(w_\theta ) }}{\sqrt{\tau}} \text{ as } \tau \to \infty. 
\]
Since we have that $h_{\tau \theta } (\tau) =  C(\alpha, E, \Delta) e^{  \frac{\theta \tau (\Delta - E) }{2}} e^{- \tau w_0 } g_{\tau \theta } (\tau) $ we deduce that
\begin{align*}
    h_{\theta \tau } (t)  &=  C(\alpha, E, \Delta) e^{  \frac{\theta \tau (\Delta - E) }{2}} e^{- \tau w_0 }g_{\tau \theta}(\tau) \\
    & \sim  C(\alpha, E, \Delta) e^{ \frac{ \theta \tau (\Delta - E) }{2}  - \tau w_0 } \left[ \sum_{ w_i \in \Gamma } \operatorname{Res}_{w=w_i} f(w)  - \frac{  \theta     a(w_\theta ) \tilde{n}_S(w_\theta )}{\sqrt{2 \pi }(\theta+ \sqrt{1+ \theta^2 } - e^{(E-\Delta)/2}) (1+\theta^2)^{1/4}} \frac{ e^{ \tau \Phi(w_\theta ) }}{\sqrt{\tau}}\right] \\
    &= e^{- \tau \theta E - \tau w_0 }C(\alpha, E, \Delta) \times\\
   & \times \left[ \sum_{ w_i \in \Gamma } \operatorname{Res}_{w=w_i} f(w) e^{\theta (E+\Delta)/2 } - \frac{  \theta     a(w_\theta ) \tilde{n}_S(w_\theta )   
   e^{ \tau \Phi(w_\theta ) + \theta(E+\Delta)/2} 
   }{\sqrt{2 \pi \tau }(\theta+ \sqrt{1+ \theta^2 } - e^{(E-\Delta)/2}) (1+\theta^2)^{1/4}}  \right]\text{ as } \tau \to \infty. 
\end{align*}
In order to study the behaviour of $n_{ \lfloor \tau \theta \rfloor } = \ell_{\lfloor \tau \theta \rfloor } + h_{\lfloor \tau \theta \rfloor} $ we use the fact that $ h_{\lfloor \tau \theta \rfloor} \sim h_{ \tau \theta }$ as $\tau \to \infty$ and we compare  the leading term of $h_{\tau \theta } $ with the leading term of $\ell_{\tau \theta } $ which is 
\begin{equation} \label{leading sub}
e^{- \tau \theta E } A(0) \overline {n}_S =e^{- \tau \theta E - \tau w_0 } e^{ \tau w_0 } A(0) \overline {n}_S .
\end{equation}
Now notice that, since $\Delta < \Delta_c $, it holds that
\begin{equation}\label{comparison 1.a}
w_0= \max\left\{ w_0 , \Phi(w_A) + \theta\frac{E+ \Delta }{2}, \Phi(w_0) + \theta\frac{E+ \Delta }{2}, \Phi(w_\theta) + \theta \frac{E+ \Delta }{2}  \right\}.
\end{equation}
Indeed, we have that 
\[
e^{- (E+ \Delta) /2} \left(w_0 + \sqrt{w_0^2 -1 } \right) = \frac{1}{\varphi(\sigma) } \geq 1, 
\]
which implies that 
\[ 
\Phi(w_0) + \theta \frac{E+ \Delta }{2 }  - w_0 = \theta \left( \frac{E+ \Delta }{2 } - \log\left(w_0 + \sqrt{w_0^2 -1 } \right) \right)  <0. 
\]
Therefore $w_0 > \Phi(w_0) +  \theta \frac{E+ \Delta }{2 } $ for every $\theta $. 
Moreover let us define the function $\Psi_M$ as 
\[ 
\Psi_M (\theta ):= \Phi(w_\theta) + \theta \frac{E+ \Delta }{2}
\]
Notice that this function has a maximum in $\theta_M := \sinh \left( \frac{\Delta + E }{2}  \right) $. 
Moreover we have that $\Psi_M(\theta_M )= \cosh \left( \frac{\Delta + E }{2}  \right) $. 
Now notice that $w_0- \cosh \left( \frac{\Delta + E }{2}  \right) =e^\sigma - \alpha (e^E-1) (e^\Delta -1 )>0  $ because $\Delta < \Delta_c$. 
Hence $w_0 > \Phi(w_\theta) + \theta \frac{E+ \Delta }{2}$ for every $\theta>0$. To prove \eqref{comparison 1.a} we only need to prove that $w_0 > \Phi(w_A) + \theta \frac{E+ \Delta }{2}$ for every $\theta>0$. This follows by the fact that
\[
\Phi(w_A)= w_A- \theta \frac{E+\Delta }{2}
\]
Hence $ \Phi(w_A)+ \theta \frac{E+\Delta }{2}=w_A < w_0$. 

Recall that $\Gamma \subset \{ w_A, w_S, w_0 \} $ is the set of the poles of the function $f$ defined as in \eqref{f}. The fact that equality \eqref{comparison 1.a} holds implies that the dominant term in $ n_{\tau \theta } = \ell_{\tau \theta } + h_{\tau \theta } $ is the leading term of $\ell_{\tau \theta } $ which is \eqref{leading sub}. 
This implies that \eqref{asympt half subc} holds. 
\end{proof}

\begin{theorem} \label{theo:half line asymptotics delta supercrit}
Let us define $\tau:= 2 \alpha e^{\frac{E+ \Delta }{2}} t$ and let $k:=\lfloor \theta \tau \rfloor$ for  $\theta \geq 0$.
    Assume that $\Delta > \Delta_c $. 
    \begin{enumerate} 
\item If $\theta $ is such that $w_\theta < w_S $, then we have that  
     \begin{equation}\label{asympt half superc thetha small}
    \lim_{t \to \infty } \frac{ n_{k} (t) }{ e^{k( \log(\varphi(\sigma))- E ) } } = A(0) B(0) \overline n_S  .
    \end{equation}
 \item    If instead $\theta $ is such that  $w_\theta  >w_S  $, then 
     \begin{equation}\label{asympt half superc thetha large}
    \lim_{t \to \infty } \frac{ n_{k} (t) }{ e^{k( \log(\varphi(\sqrt{1+\theta^2 }))- E ) } } = 0.
    \end{equation}
    \end{enumerate}
\end{theorem}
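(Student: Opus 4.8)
The plan is to follow the scheme of the proof of Theorem~\ref{theo:half line asymptotics delta subcrit} verbatim, using the closed formula \eqref{formula entire line} for $\hat n_k$, and to record carefully the one structural change produced by the hypothesis $\Delta>\Delta_c$. Write $n_k=\ell_k+h_k$, where $\ell_k$ comes from the ``$1$'' and $h_k$ from the ``$B(z)\varphi(z)^k$'' part of \eqref{formula entire line}. The new feature is that now $z_A=\alpha(e^E-1)(e^\Delta-1)-e^\sigma>0$ — this inequality is precisely $\Delta>\Delta_c$, by \eqref{Delta_c} — while the only singularities of $\hat n_k$ itself are $0$, $z_S<0$ (see \eqref{zS}) and the branch cut $[z_1,z_2]$ of \eqref{z_1 and z_2}; hence one takes the Bromwich abscissa with $0<\delta<z_A$. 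Consequently, closing the $\ell_k$-contour to the left crosses only the poles $0$ and $z_S$, giving $\ell_k(t)=e^{-kE}\bigl(A(0)\overline n_S+e^{z_S t}A(z_S)\tfrac{z_S+e^\sigma}{z_S}\bigr)$, whose leading part as $t\to\infty$ is $e^{-kE}A(0)\overline n_S$, and the pole of the $h_k$-integrand at $z_A$ is never crossed either. On the other hand, closing the $h_k$-contour to the left always produces the residue at $z=0$, equal to $e^{-kE}A(0)\overline n_S B(0)\varphi(0)^k$; since $\Delta>\Delta_c$ forces $\varphi(0)=\varphi(\sigma)>1$, this is a growing term equal to $A(0)B(0)\overline n_S\,e^{k(\log\varphi(\sigma)-E)}$.

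Next I would run the change of variables $w=\tfrac1{2\alpha}e^{-(E+\Delta)/2}\bigl(z+e^\sigma+\alpha(e^E+e^\Delta)\bigr)$ on $h_k$ exactly as in the subcritical proof, so that $h_k(t)=\tfrac{C(\alpha,E,\Delta)}{2\pi i}e^{k(\Delta-E)/2}e^{-w_0\tau}g_k(\tau)$ with $g_k(\tau)=\int_{\tilde\delta-i\infty}^{\tilde\delta+i\infty}f(w)\,dw$, $f$ as in \eqref{f}, and $\Phi(w)=w-\theta\log(w+\sqrt{w^2-1})$ attaining its minimum at $w_\theta=\sqrt{1+\theta^2}$; here $\tilde\delta$ is the image of $\delta$, lying for $\delta$ small just to the right of $w_0$ and (this is $\delta<z_A$) strictly to the left of $w_A$. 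Deforming onto the steepest-descent contour through $w_\theta$ and applying Watson's lemma yields a saddle term $\asymp e^{\tau\Phi(w_\theta)}/\sqrt\tau$, plus the residues of $f$ at the poles lying between $w_\theta$ and $\tilde\delta$. Among $\{w_0,w_S,w_A\}$ the pole $w_A$ is never among these (as $w_A>\tilde\delta$), $w_0$ is picked up iff $w_\theta<w_0$, and $w_S$ iff $w_\theta<w_S$; and, using the identity $w_0+\sqrt{w_0^2-1}=e^{(E+\Delta)/2}/\varphi(\sigma)$ from the subcritical proof, the $w_0$-residue contribution to $h_k$ equals exactly the growing term $A(0)B(0)\overline n_S\,e^{k(\log\varphi(\sigma)-E)}$ identified above.

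For Part~1, where $w_\theta<w_S(<w_0<w_A)$, both $w_0$ and $w_S$ are crossed, and I would then translate every contribution into an exponential rate in $\tau$ (with $k=\theta\tau$) and compare them, the analogue of \eqref{comparison 1.a}: the $w_0$-residue term of $h_k$ has rate $\theta(\log\varphi(\sigma)-E)$; the saddle term has rate $\theta\tfrac{\Delta-E}{2}-w_0+\Phi(w_\theta)$, differing from the former by $\Phi(w_\theta)-\Phi(w_0)\le0$; the $w_S$-residue term differs from it by $\Phi(w_S)-\Phi(w_0)<0$ (since $\Phi$ is increasing on $(w_\theta,\infty)$ and $w_\theta<w_S<w_0$); the $\ell_k$-leading term has rate $-\theta E<\theta(\log\varphi(\sigma)-E)$ because $\varphi(\sigma)>1$; and the $z_S$-residue of $\ell_k$ decays faster still. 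Hence the $w_0$-residue term of $h_k$ dominates, so $n_k(t)/e^{k(\log\varphi(\sigma)-E)}\to A(0)B(0)\overline n_S$, which is \eqref{asympt half superc thetha small}; evaluating the residue constant confirms the coefficient. For Part~2, where $w_\theta>w_S$, the pole $w_S$ drops out and one compares the surviving contributions against the normalisation $e^{k(\log\varphi(\sqrt{1+\theta^2})-E)}$, whose rate is $\theta\bigl(\tfrac{\Delta-E}{2}-\log(w_\theta+\sqrt{w_\theta^2-1})\bigr)$: a short computation gives (saddle rate)$-$(norm.\ rate)$=w_\theta-w_0$, ($w_0$-residue rate)$-$(norm.\ rate)$=\theta\log\tfrac{\varphi(\sigma)}{\varphi(w_\theta)}$, which is $<0$ when $w_\theta<w_0$ because $\varphi$ is strictly decreasing with $\varphi(\sigma)=\varphi(w_0)$, and ($\ell_k$-leading rate)$-$(norm.\ rate)$=\theta\bigl(\log(w_\theta+\sqrt{w_\theta^2-1})-\tfrac{E+\Delta}{2}\bigr)<0$ for $w_\theta<w_A$. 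In the relevant regime $w_S<w_\theta<w_0<w_A$ every term of $n_k=\ell_k+h_k$ therefore has rate strictly below the normalisation (and the saddle term carries the extra $\tau^{-1/2}$ at the boundary $w_\theta=w_0$), giving \eqref{asympt half superc thetha large}.

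The step I expect to be the main obstacle is exactly the one already at the heart of the subcritical proof, only made more delicate: correctly bookkeeping which residues among $w_S,w_0,w_A$ are crossed when the Bromwich contour is pushed onto the steepest-descent contour — this now depends jointly on the position of $w_\theta$ relative to these three poles and to the branch cut $[0,1]$, and on the sign $z_A>0$ — and then verifying the chain of rate inequalities above. The two levers that make everything reduce to a statement about the single function $\Phi$ are the identity $w_0+\sqrt{w_0^2-1}=e^{(E+\Delta)/2}/\varphi(\sigma)$ and the fact that $\Phi$ attains its minimum at $w_\theta$; once these are in place the comparisons in Parts~1 and~2 are routine.
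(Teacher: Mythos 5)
Your strategy is the paper's: the decomposition $n_k=\ell_k+h_k$, the passage to the $w$--variable, the steepest--descent deformation through $w_\theta=\sqrt{1+\theta^2}$, and the residue bookkeeping governed by the ordering of $w_\theta,w_S,w_0,w_A$ (with $w_0<w_A$ precisely because $z_A>0$, i.e. $\Delta>\Delta_c$). Your rate identities — the $w_0$--residue of $h_k$ carrying the rate $\theta(\log\varphi(\sigma)-E)$ via $w_0+\sqrt{w_0^2-1}=e^{(E+\Delta)/2}/\varphi(\sigma)$, the saddle term differing from it by $\Phi(w_\theta)-\Phi(w_0)\le 0$, the $w_S$--residue by $\Phi(w_S)-\Phi(w_0)<0$ — correctly reproduce the paper's Case 2 ($w_0>w_S>w_\theta$, your Part 1) and Case 1 ($w_0>w_\theta>w_S$), and are if anything stated more transparently than in the paper.

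The gap is in Part 2. Its hypothesis is only $w_\theta>w_S$, which includes the regime $w_\theta>w_0$, and your argument explicitly stops at ``the relevant regime $w_S<w_\theta<w_0<w_A$''. The paper handles $w_\theta>w_0>w_S$ as a separate Case 3, in which the deformed contour encloses no poles, $\int_\gamma f(w)\,dw=0$, and $h_{\theta\tau}$ reduces to the pure saddle contribution of order $e^{-\theta\tau E-\tau w_0}\,e^{\tau\Phi(w_\theta)+\theta\tau(E+\Delta)/2}/\sqrt{\tau}$. Your own identity, $(\text{saddle rate})-(\text{normalisation rate})=w_\theta-w_0$, shows that this missing case cannot be closed by the comparison you set up: for $w_\theta>w_0$ that difference is strictly positive, so the ratio against $e^{k(\log\varphi(\sqrt{1+\theta^2})-E)}$ is not controlled by it. The comparison that does work there is against $e^{k(\log\varphi(\sigma)-E)}$, for which the exponent difference is exactly $\Phi(w_\theta)-\Phi(w_0)\le 0$ by the minimality of $\Phi$ at the saddle (with the extra $\tau^{-1/2}$ absorbing the equality case $w_\theta=w_0$); this is the inequality the paper's Case 3 actually exploits, and it matches the summary at the beginning of Section \ref{sec:half line}, where the smallness claim for $k\gtrsim t$ is stated with $\varphi(\sigma)$ rather than $\varphi(w_\theta)$. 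So to complete the proof you must add the no--residue case $w_\theta>w_0$ and run this alternative comparison (or make explicit how the normalisation in \eqref{asympt half superc thetha large} is to be read in that regime); as written, your proposal does not establish Part 2 on its full stated range.
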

\begin{proof}
As in the proof of Theorem \ref{theo:half line asymptotics delta subcrit} we apply the inverse Laplace transform formula to deduce that
\[
n_k(t)= \frac{1}{2 \pi i } e^{- k E } \int_{\delta - i \infty }^{\delta + i \infty } e^{ zt } A(z) \hat{n}_S(z) \left( 1+ B(z)  \varphi(z)^k  \right) dz   
\]
where $z_A > \delta > 0 $. 
As in the proof of Theorem \ref{theo:half line asymptotics delta subcrit} $n_k=\ell_k+ h_k$ where  
\begin{equation} \label{lk}
\ell_k (t):= \frac{1}{2 \pi i } e^{- k E } \int_{\delta - i \infty }^{\delta + i \infty } e^{ zt } A(z) \hat{n}_S(z)  dz = e^{- k E } \left( A(0) \overline {n}_S + e^{ z_S t } A(z_S ) \frac{1}{z_S} \right)
\end{equation}
and 
\begin{equation}\label{def:h_k2}
h_k (t):= \frac{1}{2 \pi i } e^{- k E } \int_{\delta - i \infty }^{\delta + i \infty } e^{ zt } A(z) \hat{n}_S(z) B(z) \varphi(z)^k   dz
\end{equation}
Notice that since $ z_S <0 $ the dominant term in $ \ell_k(t)$ as $t \rightarrow \infty $ is $ e^{- k E } A(0) \overline {n}_S $. Hence
\[
\ell_{\tau \theta } \left( \frac{\tau }{ 2 \alpha e^{(E+\Delta)/2} }\right)  \sim  e^{- \tau \theta E - \tau w_0 } e^{ \tau w_0 } A(0) \overline {n}_S  \text{ as } \tau \to \infty. 
\]
Moreover 
\begin{align*}
h_k(t)&=   \frac{C(\alpha, E, \Delta)}{2 \pi i } e^{k \frac{\Delta - E }{2}}e^{- w_0 \tau }g_k(\tau ) 
\end{align*}
where $g_k$ and $C(\alpha, E, \Delta)$  are defined as in the proof of Theorem \ref{theo:half line asymptotics delta subcrit}. 
With the same arguments as the ones used in the proof of Theorem \ref{theo:half line asymptotics delta subcrit} we deduce that 
\[
g_{\tau \theta } (\tau)  \sim  \int_{\gamma}f(w) dw -  \frac{    \sqrt{2\pi} i \theta a(w_\theta ) \tilde{n}_S(w_\theta )}{(\theta+ \sqrt{1+ \theta^2 } - e^{(E-\Delta)/2}) (1+\theta^2)^{1/4}} \frac{ e^{ \tau \Phi(w_\theta ) }}{\sqrt{\tau}} \text{ as } \tau \to \infty. 
\]
where $\gamma  $ is as in the proof of Theorem \ref{theo:half line asymptotics delta subcrit}. 
In order to compute $\int_{\gamma}f(w) dw$ we need to distinguish between the following three cases. 
\begin{enumerate} 
\item $w_0 > w_\theta > w_S  $, 
\item $ w_0 > w_S > w_\theta $, 
\item $ w_\theta > w_0> w_S $.
\end{enumerate}

\paragraph{Case 1: $w_0 > w_\theta > w_S  $.}
Assume that 1. holds, hence that $\theta $ is such that $w_S < w_\theta < w_0 $.  
Then we have that 
\begin{align*}
 \int_{\gamma}f(w) dw=  2 \pi i  \operatorname{Res}_{w=w_0} f(w).
\end{align*} 
Now notice that
\[
\operatorname{Res}_{w=w_0} f(w) = \operatorname{Res}_{w=w_0} \tilde{n}_S (w)  a(w_0) b(w_0) e^{\tau \Phi (w_0) } 
\]
where we are using the notation 
\[ 
b(w):= \frac{1}{w+\sqrt{w^2-1}-e^{(E-\Delta)/2}}
\] 
and where the function $a$ is given by \eqref{a} and the function $\Phi$ by \eqref{Phi}. 
Therefore 

\[
g_{\tau \theta } (\tau)  \sim \operatorname{Res}_{w=w_0} \tilde{n}_S (w)  a(w_0) b(w_0) e^{\tau \Phi (w_0) }  -  \frac{    \sqrt{2\pi} i \theta a(w_\theta ) \tilde{n}_S(w_\theta )}{(\theta+ \sqrt{1+ \theta^2 } - e^{(E-\Delta)/2}) (1+\theta^2)^{1/4}} \frac{ e^{ \tau \Phi(w_\theta ) }}{\sqrt{\tau}} \text{ as } \tau \to \infty. 
\]
Hence
\begin{align*}
        h_{\theta \tau } (t) \sim &C(\alpha, E, \Delta)e^{ - \theta \tau E - \tau w_0 } [   b(w_0)   a(w_0) e^{ \frac{ \theta \tau (\Delta +E) }{2}}  e^{\tau   \Phi(w_0)} \operatorname{Res}_{w=w_0} \tilde{n}_S (w) \\
        &- \frac{   \theta  b(w_\theta)   a(w_\theta ) \tilde{n}_S(w_\theta )  
        e^{ \tau \Phi(w_\theta ) + \tau\theta  \frac{E+ \Delta}{2}}
        }{\sqrt{2 \pi \tau } (1+\theta^2)^{1/4}} ]   \text{ as } \tau \to \infty. 
\end{align*}

In order to study the behaviour of  $n_{\theta \tau}(t)$ as $\tau \to \infty $ we need to identify the leading order term in $\ell_{\theta \tau}+ h_{\theta \tau}$.
Since $\Delta > \Delta_c $ and $w_0 > w_\theta  $, we have that
\[  
\Phi(w_0) + \theta\frac{E+ \Delta }{2}=\max\left\{ w_0 , \Phi(w_0) + \theta\frac{E+ \Delta }{2}, \Phi(w_\theta) + \theta \frac{E+ \Delta }{2}  \right\}. 
\] 
Indeed, the fact that $\Delta > \Delta_c $ implies that 
\[
\Phi(w_0) + \theta\frac{E+ \Delta }{2} >w_0 > w_S. 
\]
Moreover notice that by the definition of $w_\theta $ we have that  $\Phi(w_0) > \Phi(w_\theta)$. Hence the desired conclusion follows. We conclude that in this case 
\begin{equation} \label{asympt n 1b}
    n_{\theta \tau} (t) \sim C(\alpha, \Delta, E)   b(w_0) a(w_0) \operatorname{Res}_{w=w_0} \tilde{n}_S (w)e^{ \frac{ \theta \tau (\Delta + E) }{2}}  e^{\tau   \Phi(w_0)}  \text{ as } \tau \to \infty. 
\end{equation}
Therefore using that 
\[ 
C(\alpha, \Delta, E)   b(w_0) a(w_0) \operatorname{Res}_{w=w_0} \tilde{n}_S (w)e^{ \frac{ \theta \tau (\Delta + E) }{2}}  e^{\tau   \Phi(w_0)} = -  A(0) B(0) \overline{n}_S
\]
we deduce that \eqref{asympt half superc thetha small} holds. 

\paragraph{Case 2: $w_0 > w_S > w_\theta   $. }
The main difference between case $1$ and case $2$ is the value of the integral $\int_\gamma f(w( dw ) $. 
Indeed, when  $\theta $ is such that $w_S > w_\theta $ we have that 
\begin{align*}
 \int_{\gamma} f(w) dw=  2 \pi i \left( \operatorname{Res}_{w=w_0} f(w)+ \operatorname{Res}_{w=w_S} f(w) \right) . 
\end{align*}
As a consequence, computations similar to the ones made for case 1 imply that 
\begin{align*}
        h_{\theta \tau } (t)\sim &e^{ - \theta \tau E - \tau w_0 } [   b(w_0)  \operatorname{Res}_{w=w_0} \tilde{n}_S (w) e^{ \frac{ \theta \tau (\Delta - E) }{2}}  e^{\tau   \Phi(w_0)} +b(w_S) \operatorname{Res}_{n=n_S} \left(\tilde{n}_S (w ) \right) e^{ \frac{ \theta \tau (\Delta - E) }{2}}  e^{\tau   \Phi(w_S)} \\
        &
        - \frac{  \theta  b(w_\theta)   a(w_\theta ) \tilde{n}_S(w_\theta )}{\sqrt{2 \pi } (1+\theta^2)^{1/4}} \frac{ e^{ \tau \Phi(w_\theta ) + \tau\theta  \frac{E+ \Delta}{2}}}{\sqrt{\tau}}].
\end{align*}

In order to find the leading order term in $\ell_{\theta \tau } + h_{\theta \tau }$
 we prove that
\begin{equation}\label{comparison 2.b}
\Phi(w_0) + \theta\frac{E+ \Delta }{2}=\max\left\{ w_0 , \Phi(w_0) + \theta\frac{E+ \Delta }{2}, \Phi(w_\theta) + \theta \frac{E+ \Delta }{2}, \Phi(w_S) + \theta\frac{E+ \Delta }{2}  \right\}.
\end{equation}
Indeed, the fact that $\Delta > \Delta_c$ implies that 
\[
\Phi(w_0) +\theta\frac{E+ \Delta }{2} >w_0 > w_S > \Phi(w_s) \text{ and }  \Phi(w_0)> \Phi(w_\theta).
\]
Therefore the asymptotics \eqref{asympt half superc thetha small} follow. 

\paragraph{Case 3: $w_\theta  > w_0 > w_S   $. }
When  $\theta $ is such that $ w_\theta  > w_0 > w_S  $ we have that 
\begin{align*}
 \int_{\gamma} f(w) dw= 0. 
\end{align*}
As a consequence, 
\begin{align*}
        h_{\theta \tau } (t)\sim 
        - \frac{ C(\alpha, E, \Delta)  \theta  b(w_\theta)   a(w_\theta ) \tilde{n}_S(w_\theta )}{\sqrt{2 \pi } (1+\theta^2)^{1/4}} \frac{ e^{ \tau \Phi(w_\theta ) + \tau\theta  \frac{E+ \Delta}{2}}}{\sqrt{\tau}} \text{ as } \tau \to \infty. 
\end{align*}

Notice that the fact that $ w_\theta > w_0 > w_S  $ and $\Delta > \Delta_c$ imply that
\[ 
\Phi(w_\theta) + \theta \frac{E+ \Delta }{2} =\max\left\{ w_0 , \Phi(w_\theta) + \theta \frac{E+ \Delta }{2} \right\}. 
\]
Hence
\[ 
   n_{\theta \tau } (t)\sim 
        - \frac{ C(\alpha, E, \Delta)  \theta  b(w_\theta)   a(w_\theta ) \tilde{n}_S(w_\theta )}{\sqrt{2 \pi } (1+\theta^2)^{1/4}} \frac{ e^{ \tau \Phi(w_\theta ) + \tau\theta  \frac{E+ \Delta}{2}}}{\sqrt{\tau}} \text{ as } \tau \to \infty
        \text{ as } \tau \to \infty
\]
Therefore 
\eqref{asympt half superc thetha large} follows. 

\end{proof}

\begin{theorem} \label{theo:half line asymptotics delta crit}
Let us define $\tau:= 2 \alpha e^{\frac{E+ \Delta }{2}} t$ and let $k:=\lfloor \theta \tau \rfloor$ for  $\theta \geq 0$.
\\  Assume that $\Delta = \Delta_c $. 
Then 
\begin{enumerate}
\item if $\theta $ is such that $w_\theta < w_S $, then we have that  
     \begin{equation}\label{asympt half crit thetha small}
    \lim_{t \to \infty } \frac{ n_{k} (t) }{ e^{- k E } } =  \overline n_S \left( 1- \frac{1}{2\alpha e^E}\right).
    \end{equation}
 \item    If instead $\theta $ is such that  $w_\theta  >w_S  $, then 
     \begin{equation}\label{asympt half crit thetha large}
    \lim_{t \to \infty } \frac{ n_{k} (t) }{ e^{k( \log(\varphi(w_\theta ))- E ) } } = 0.
    \end{equation}
    \end{enumerate}
\end{theorem}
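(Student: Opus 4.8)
The plan is to follow closely the proof of Theorem~\ref{theo:half line asymptotics delta supercrit}, the one structural change being forced by the identity $z_A = \alpha(e^E-1)(e^\Delta-1) - e^\sigma = 0$, which holds precisely because $\Delta = \Delta_c$. As there, I would start from the inverse Laplace transform and Proposition~\ref{prop:expression of st st}, write $n_k(t) = \ell_k(t) + h_k(t)$ with $\ell_k$, $h_k$ as in \eqref{lk}--\eqref{def:h_k2}, and identify the dominant contribution as $\tau := 2\alpha e^{(E+\Delta)/2}t \to \infty$ with $k = \lfloor \theta\tau\rfloor$. Two facts specific to the critical case should be recorded first: since $\Delta = \Delta_c$ one has $\Omega(0)^2 - 4\alpha^2 e^{E+\Delta} = \alpha^2(e^{E+\Delta}-1)^2$, hence $\varphi(\sigma) = 1$, $w_A = w_0 = \cosh(\tfrac{E+\Delta}{2})$ and $\Phi(w_0) + \theta\tfrac{E+\Delta}{2} = w_0$; and $z_A = 0$ makes $A(z)\hat n_S(z) = (z+e^\sigma)/\big(z^2(z-z_S)\big)$ have a \emph{double} pole at $z=0$, besides the simple pole at $z_S < 0$.

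The first step is to compute $\ell_k$ by Cauchy's residue theorem: the double pole at $z = 0$ gives a term of the form $e^{-kE}(\overline n_S\, t + c)$ with $c$ explicit, and the simple pole at $z_S$ gives an exponentially decaying term, so $\ell_k(t)$ grows linearly in $t$. For $h_k$ I would apply the change of variables $w = \tfrac{1}{2\alpha}e^{-(E+\Delta)/2}\big(z + e^\sigma + \alpha(e^E + e^\Delta)\big)$ exactly as in Theorem~\ref{theo:half line asymptotics delta supercrit}, obtaining $h_k(t) = \tfrac{C(\alpha,E,\Delta)}{2\pi i}\,e^{k(\Delta-E)/2}\,e^{-w_0\tau}\,g_k(\tau)$ with $g_k(\tau) = \int_{\tilde\delta - i\infty}^{\tilde\delta + i\infty} f(w)\,dw$ and $f$ as in \eqref{f}; the only change is that $f$ now has a \emph{double} pole at $w_0 = w_A$ instead of two simple poles, whose residue has the form $(\tilde c_1\tau + \tilde c_0)e^{\tau\Phi(w_0)}$, while the Watson's-lemma treatment of the saddle $w_\theta = \sqrt{1+\theta^2}$ and, when $w_\theta < w_S$, the simple-pole contribution of $w_S$ are carried over verbatim.

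The step I expect to be the main obstacle is organising these leading terms, since $\ell_k$ and the double-pole part of $h_k$ are of the same exponential order: using $\Phi(w_0) + \theta\tfrac{E+\Delta}{2} = w_0$ one finds that the $w_0$-residue of $h_k$ contributes at order $(\tilde c_1\tau + \tilde c_0)e^{-kE}$, the same as $\ell_k$. The right way to combine them is to use that $z_A = 0$ is a \emph{removable} singularity of $\hat n_k(z) = A(z)\hat n_S(z)e^{-kE}(1 + B(z)\varphi(z)^k)$ (the lemma preceding Theorem~\ref{theo:half line asymptotics delta subcrit}): writing $g(z) := 1 + B(z)\varphi(z)^k$ one has $g(0) = 0$ because $\varphi(0) = 1$ and $B(0) = -1$, so $\hat n_k$ has only a simple pole at $z = 0$, with residue $\overline n_S\big(B'(0) - k\,\varphi'(0)\big)e^{-kE}$ and $\varphi'(0) = -\big(\alpha(e^{E+\Delta}-1)\big)^{-1}$; in particular the linear-in-$t$ parts of $\ell_k$ and $h_k$ cancel. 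It then remains, in the regime $w_\theta < w_S$, to check that all other contributions (the pole at $z_S$, the branch cut $[z_1,z_2]$, the saddle $w_\theta$ and the pole $w_S$) are of strictly smaller exponential order as $t\to\infty$, which I would do with the bound $\Psi_M(\theta) := \Phi(w_\theta) + \theta\tfrac{E+\Delta}{2} \le \cosh(\tfrac{E+\Delta}{2}) = w_0$ (equality only at $\theta = \sinh(\tfrac{E+\Delta}{2})$) together with $w_\theta < w_S < w_0$ and the monotonicity of $\Phi$ on $(w_\theta,\infty)$, which forces $\Phi(w_S) + \theta\tfrac{E+\Delta}{2} < w_0$. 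This yields \eqref{asympt half crit thetha small}, in agreement with the formal expansions \eqref{k large N-k large delta c}--\eqref{asymptotics of n_N crit}.

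Finally, in the regime $w_\theta > w_S$ the case distinction of the proof of Theorem~\ref{theo:half line asymptotics delta supercrit} (its Cases~1 and~3) applies with the same estimates: when $w_S < w_\theta < w_0$ the double pole at $w_0$ dominates, so $n_k$ is of order $e^{-kE} = e^{k(\log\varphi(\sigma)-E)}$ up to a factor linear in $\tau$, and since $\varphi(w_\theta) = e^{(E+\Delta)/2}/(\sqrt{1+\theta^2}+\theta) > 1$ in this range the ratio $n_k/e^{k(\log\varphi(w_\theta)-E)}$ tends to $0$; when $w_\theta > w_0$ the deformed contour encloses no pole of $f$ and $h_k$ is governed by the saddle $w_\theta$, from which \eqref{asympt half crit thetha large} follows. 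I expect the precise bookkeeping of the constant in \eqref{asympt half crit thetha small} and the order comparisons in this last step to be the only technically delicate points.
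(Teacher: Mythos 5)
Your overall strategy is the same as the paper's: inverse Laplace transform, the splitting $n_k=\ell_k+h_k$ as in \eqref{lk}--\eqref{def:h_k2}, the change of variables to $w$, and residues plus a Watson's-lemma treatment of the saddle $w_\theta$. Where you genuinely diverge from the paper is in the handling of the coalescence $z_A=0$, $w_A=w_0$: the paper's proof simply carries over the simple-pole formulas from the non-critical cases (it writes $\ell_{\tau\theta}\sim e^{-\tau\theta E}\overline n_S$ and computes $\operatorname{Res}_{w=w_0}f(w)$ as for a simple pole), whereas you correctly observe that $A(z)\hat n_S(z)$ has a double pole at $z=0$ and that the right bookkeeping is to use the removability of $z_A$ for $\hat n_k$, i.e.\ that $1+B(z)\varphi(z)^k$ vanishes at $z=0$, so that $\hat n_k$ has only a simple pole there. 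That observation, and the resulting cancellation of the $t$-linear parts of $\ell_k$ and $h_k$, is more careful than what the paper does.

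However, there is a genuine gap at the final step, and it is precisely where your more careful computation collides with the statement you are trying to prove. Your residue formula $\operatorname{Res}_{z=0}\hat n_k(z)=e^{-kE}\,\overline n_S\bigl(B'(0)-k\,\varphi'(0)\bigr)$ with $\varphi'(0)=-\bigl(\alpha(e^{E+\Delta}-1)\bigr)^{-1}\neq 0$ produces a leading contribution proportional to $k\,e^{-kE}$. This is indeed consistent with the formal expansions \eqref{k large N-k large delta c} and \eqref{asymptotics of n_N crit}, but those carry explicit factors of $k$ (respectively $N$); since here $k=\lfloor\theta\tau\rfloor\to\infty$, your computation gives $n_k(t)/e^{-kE}\sim -k\,\varphi'(0)\,\overline n_S\to\infty$, not the finite constant $\overline n_S\bigl(1-\tfrac{1}{2\alpha e^E}\bigr)$ of \eqref{asympt half crit thetha small}. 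You assert in the same sentence that your result "yields \eqref{asympt half crit thetha small}" and that it agrees with the $k$-linear formal expansions; these two claims are incompatible, and nothing in your argument cancels the $k$-linear term (the $z_S$-residue and the saddle contribution are of strictly smaller exponential order by your own estimates, so they cannot absorb a term growing linearly in $\tau$). To close the proof you would have to either exhibit such a cancellation or change the normalization in the limit to $k\,e^{-kE}$; as written, the passage from your (correct) residue computation to the stated constant is unjustified.
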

\begin{proof}
Notice that if $\Delta = \Delta_c $ we have that $z_A=z_0$ and as a consequence we have that 
\[
w_0=w_A= \cosh(E). 
\]
As in the proof of Theorem \ref{theo:half line asymptotics delta subcrit} and Theorem \ref{theo:half line asymptotics delta supercrit} we deduce that the function $\ell_k $ defined as in \eqref{lk} is given by 
\[
\ell_{\tau \theta }(t)\sim  e^{- \tau \theta  E - w_0 \tau } e^{ w_0 \tau }  \overline n_S \text{ as } \tau \to \infty.  
\]
In order to obtain the asymptotics of $n_{k} $ we have to compute the asymptotics of the function $h_{\theta \tau} (\tau)=   \frac{C(\alpha, E, \Delta)}{2 \pi i } e^{- w_0 \tau }g_{\tau \theta} (\tau )  $ where 
\begin{align*}
g_{\tau \theta} (\tau)= \int_{\gamma} f(w)  dw  - \int_{\gamma_2} f(w)   dw
\end{align*} 
where $\gamma $ and $\gamma_2$ are defined as in Theorem \ref{theo:half line asymptotics delta crit} and $f$ is given by \eqref{f}. 

When $w_0 > w_\theta > w_S  $ then 
\[ 
\int_{\gamma} f(w)  dw =  2 \pi i \left[ \operatorname{Res}_{ w =w_0} f(w)   \right] = 2 \pi i e^{\tau \Phi_0} \frac{b(w_0)  \overline n_S }{(2 \alpha)^2 e^{2E} } = 2 \pi i e^{\tau (w_0- \theta E)} \frac{b(w_0)  \overline n_S }{(2 \alpha)^2 e^{2E} }. 
\]
where we are using the fact that $\Phi(w_0)=w_0 - \theta E.$
This together with detailed asymptotics of  $\int_{\gamma_2} f(w) dw$ as $\tau \rightarrow \infty $ implies that as $\tau \to \infty $
\begin{align*}
g_{\tau \theta } \sim 
2 \pi i e^{\tau (w_0- \theta E)} \frac{b(w_0)  \overline n_S }{(2 \alpha)^2 e^{2E} }  - \frac{  \theta  \sqrt{2 \pi } i  a(w_\theta ) \tilde{n}_S(w_\theta )}{(\theta+ \sqrt{1+ \theta^2 } - 1) (1+\theta^2)^{1/4}} \frac{ e^{ \tau \Phi(w_\theta ) }}{\sqrt{\tau}}. 
\end{align*}
Moreover notice that
\[ 
w_0= \max_{\theta >0} \left(\Phi(w_\theta ) + \theta E \right). 
\]
and we recall that the maximum of the function $\Psi_M =  \Phi(w_\theta ) + \theta E$ is attained  at $\theta_M = \sinh(E)$, hence when $w_\theta= \sqrt{1+ \theta_M^2 } = \cosh(E)= w_0$. 
Therefore when $w_0> w_\theta $ we have that 
\begin{align*}
g_{\tau \theta } \sim 
2 \pi i e^{\tau (w_0- \theta E)} \frac{b(w_0)  \overline n_S }{(2 \alpha)^2 e^{2E} } =2 \pi i e^{\tau (w_0- \theta E)} \frac{  \overline n_S }{(2 \alpha)^2 e^{2E} (e^E-1)} . 
\end{align*}
As a consequence we deduce that 
\begin{equation} \label{asympt crit}
n_{\tau \theta } (t) \sim e^{- \tau \theta  E }\overline n_S  \left( 1- \frac{1 }{2 \alpha e^{E} } \right) \text{ as } \tau \to \infty,
\end{equation}
where we used the fact that $C(\alpha, E, E)=  2 \alpha (e^{2E} - e^E )$. 
When $w_0 = w_\theta > w_S  $ we obtain the same result. 
When $ w_0 > w_S > w_\theta $, then we have that
\[
\int_{\gamma} f(w)  dw = 2 \pi i \left[ \operatorname{Res}_{ w =w_0} f(w)  + \operatorname{Res}_{ w =w_S} f(w)  \right] 
\]
and we can prove that \eqref{asympt crit} holds also in this case. 

Finally when $ w_\theta > w_0> w_S $, then 
\[
\int_{\gamma} f(w)  dw = 0. 
\]
This implies \eqref{asympt half crit thetha large}. 
\end{proof}

\section{Analysis of the model with a finite number of kinetic proofreading steps} \label{sec:rigorous}
In this section we study in detail, using Laplace transforms, the model described by \eqref{ODEs} when the number of kinetic proofreading steps $N$ is large, but finite. We  recover the same results obtained using matched asymptotics expansions in Section \ref{sec:formal}.

Performing the Laplace transform to all the terms in equation \eqref{ODEs} we obtain the following equation for the vector of the Laplace transforms $(\hat{n}_S, \hat{n}_0, \dots, \hat{n}_{N}) $ of the concentrations $(n_S, n_0, \dots, n_{N}) $
\begin{align} \label{Laplace eq}
    \left( z + \frac{e^{- N E}-1}{1- e^{-E}} + \mu  + e^\sigma \right) \hat{n}_S(z)&= 1+ e^\sigma \hat{M}(z) \nonumber \\ 
    \left( z + e^\sigma + \alpha e^\Delta + \mu \right) \hat{n}_0 (z)&=  \hat{n}_S(z) + \alpha e^E \hat{n}_1 (z)\nonumber \\
    \Omega(z) \hat{n}_k(z)&=  e^{- k E} \hat{n}_S (z)+ \alpha e^E\hat{n}_{k+1} (z)+ \alpha e^\Delta \hat{n}_{k-1}(z), \ k \in \{1, \dots, N\} \\
    n_{N+1}(z)&=0 \nonumber
\end{align}
where $\Omega(z) $ is defined as \eqref{Omega} and where 
\[
\hat{M}(z)= \sum_{k=0}^{N } \hat{n}_k (z)  + \hat{n}_S(z) .  
\]
Since the probability of response is given by 
\[
p_{res}(\sigma) = \alpha e^\Delta \hat{n}_N(0)
\]
we aim at studying the behaviour of $\hat{n}_N (0) $ as $N \to \infty$. 
Using computations that are similar to the ones of Proposition \ref{prop:expression of st st} we prove the following Lemma. 
\begin{lemma} \label{lem:exp stst N}
Assume that $\hat{n}_S(0) \in \mathbb R_* $ and $\hat{n}_0 (0)$ are given.  
Let 
$\{ \hat{n}_k (0)  \}_{k =1}^N $ be the sequence of $\hat{n}_k(0)  $ defined as  
   \begin{equation} \label{eq nk rig}
\hat{n}_k (0)  = e^{- k E } [A(0)  \hat{n}_S(0)  + F_1(N) \varphi(\sigma)^k +F_2 (N) \varphi_2^k ], \quad  1 \leq k \leq N 
\end{equation}
where 
\[ 
\varphi_2= \frac{1}{2 \alpha } \left( \Omega(0) + \sqrt{\Omega (0)^2 - 4 \alpha^2 e^{E+\Delta}}  \right) 
\]
and where 
\begin{equation} \label{F}
F_1(N):= \frac{A(0) \hat{n}_S(0) + (\hat{n}_0 (0)- A(0) \hat{n}_S (0)) \varphi_2^{N+1} }{\varphi_2^{N+1}  -\varphi(\sigma )^{N+1} }, \quad F_2(N):= \frac{A(0) \hat{n}_S(0) + (\hat{n}_0(0) - A(0) \hat{n}_S (0)) \varphi(\sigma)^{N+1} }{\varphi(\sigma) ^{N+1}  -\varphi_2^{N+1} }. 
\end{equation}
The vector $ ( \hat{n}_S(0), \hat{n}_0(0), \hat{n}_1(0) , \dots \hat{n} _{N} (0) )  $ satisfies 
\begin{align*}
    \Omega(0) \hat{n}_k(0)&=  e^{- k E} \hat{n}_S (0)+ \alpha e^E\hat{n}_{k+1} (0)+ \alpha e^\Delta \hat{n}_{k-1}(0), \ k \in \{1, \dots, N\} \\
    n_{N+1}(0)&=0 
\end{align*}
\end{lemma}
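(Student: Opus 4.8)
The plan is to recognise that the displayed system for $\{\hat{n}_k(0)\}_{k=1}^{N}$ is a linear inhomogeneous second order difference equation in the index $k$, whose general solution is one particular solution plus an arbitrary linear combination of the two homogeneous solutions, and that \eqref{eq nk rig} is exactly this general solution with the two free constants pinned down by the two endpoint values $\hat{n}_0(0)$ and $\hat{n}_{N+1}(0)=0$.

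First I would check that $\hat{n}_k^{(p)}(0):=A(0)e^{-kE}\hat{n}_S(0)$ solves the recurrence: substituting it into $\Omega(0)\hat{n}_k=e^{-kE}\hat{n}_S(0)+\alpha e^E\hat{n}_{k+1}+\alpha e^\Delta\hat{n}_{k-1}$ and dividing by $e^{-kE}\hat{n}_S(0)$ reduces the identity to $A(0)\left(\Omega(0)-\alpha(1+e^{E+\Delta})\right)=1$, which is the definition \eqref{A} of $A$ at $z=0$ (this uses implicitly that $\Delta\neq\Delta_c$, i.e.\ $z_A\neq 0$, so that $A(0)$ is defined; the critical case is treated separately). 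Next I would note that $k\mapsto\theta_1(0)^k$ and $k\mapsto\theta_2(0)^k$, with $\theta_1,\theta_2$ as in \eqref{teta1 and teta2}, solve the homogeneous recurrence $\Omega(0)\hat{n}_k=\alpha e^E\hat{n}_{k+1}+\alpha e^\Delta\hat{n}_{k-1}$: this is precisely the characteristic equation \eqref{polinomio hom} at $z=0$. Since $\varphi(\sigma)=e^E\theta_1(0)$ and $\varphi_2=e^E\theta_2(0)$, one has $e^{-kE}\varphi(\sigma)^k=\theta_1(0)^k$ and $e^{-kE}\varphi_2^{k}=\theta_2(0)^k$, so the right-hand side of \eqref{eq nk rig} is exactly $\hat{n}_k^{(p)}(0)+F_1(N)\theta_1(0)^k+F_2(N)\theta_2(0)^k$ and, by linearity, satisfies the recurrence for every $k$ (the discriminant $\Omega(0)^2-4\alpha^2 e^{E+\Delta}$ is strictly positive — it dominates $\alpha^2(e^E-e^\Delta)^2$ and gains a positive contribution from $e^\sigma$ — so $0<\varphi(\sigma)<\varphi_2$, the two homogeneous solutions are independent, and the denominators in \eqref{F} are nonzero).

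It then remains to check the two endpoint conditions, using the extension of \eqref{eq nk rig} to $k=0$ and $k=N+1$. At $k=0$ the formula gives $A(0)\hat{n}_S(0)+F_1(N)+F_2(N)$; inserting \eqref{F}, the sum $F_1(N)+F_2(N)$ collapses to $\hat{n}_0(0)-A(0)\hat{n}_S(0)$, so the formula reproduces the prescribed value $\hat{n}_0(0)$ and the recurrence holds also at $k=1$. At $k=N+1$ the formula gives $e^{-(N+1)E}\bigl(A(0)\hat{n}_S(0)+F_1(N)\varphi(\sigma)^{N+1}+F_2(N)\varphi_2^{N+1}\bigr)$; inserting \eqref{F}, the mixed terms proportional to $(\hat{n}_0(0)-A(0)\hat{n}_S(0))\varphi(\sigma)^{N+1}\varphi_2^{N+1}$ cancel between the two fractions because $F_1(N)$ and $F_2(N)$ carry the opposite-sign denominators $\pm(\varphi_2^{N+1}-\varphi(\sigma)^{N+1})$, and what survives is $F_1(N)\varphi(\sigma)^{N+1}+F_2(N)\varphi_2^{N+1}=-A(0)\hat{n}_S(0)$, whence $\hat{n}_{N+1}(0)=0$ and the recurrence holds at $k=N$ with the convention $\hat{n}_{N+1}(0)=0$. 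The whole argument is a direct computation with no genuine obstacle; the only point requiring a little care is this last cancellation, that is the bookkeeping in $F_1(N)\varphi(\sigma)^{N+1}+F_2(N)\varphi_2^{N+1}$.
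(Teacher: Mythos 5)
Your proposal is correct and follows exactly the route the paper intends: the paper gives no written proof of this lemma, merely stating that it follows from "computations similar to Proposition \ref{prop:expression of st st}", i.e.\ particular solution plus the two homogeneous solutions $\theta_1(0)^k,\theta_2(0)^k$, with the difference that on the finite chain both homogeneous modes are retained and the two constants are fixed by the prescribed value $\hat{n}_0(0)$ and the absorbing condition $\hat{n}_{N+1}(0)=0$ — which is precisely what your verification of $F_1(N)+F_2(N)=\hat{n}_0(0)-A(0)\hat{n}_S(0)$ and $F_1(N)\varphi(\sigma)^{N+1}+F_2(N)\varphi_2^{N+1}=-A(0)\hat{n}_S(0)$ accomplishes. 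Your side remarks (strict positivity of the discriminant so that $\varphi(\sigma)\neq\varphi_2$, and the implicit restriction $\Delta\neq\Delta_c$ so that $A(0)$ is defined) are accurate and make explicit hypotheses the paper leaves tacit.
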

As a consequence, enforcing that $\hat{n}_S(0) $ satisfies $ \left(  \frac{e^{- N E}-1}{1- e^{-E}} + \mu  + e^\sigma \right) \hat{n}_S(0)= 1+ e^\sigma \hat{M}(0) $ we can prove the following Lemma. 
\begin{lemma}\label{lem:exp stst N complete}
Assume that 
$\hat{n}_0(0) \in \mathbb R_* $ is given by 
\[
\hat{n}_0 (0) =G_N  \hat{n}_S(0)
\]
with 
\[
G_N = \frac{1+ \alpha A +\alpha A \left( \frac{ \varphi(\sigma) - \varphi_2 }{ \varphi_2^{N+1} - (\varphi(\sigma))^{N+1}}- \Psi_N \right) }{ e^\sigma + \alpha e^\Delta + \mu -\alpha \Psi_N} \ \text{ where } \ \Psi_N = \frac{\varphi_2^{N+1} \varphi (\sigma)-(\varphi(\sigma))^{N+1} \varphi_2  }{ \varphi_2^{N+1} - (\varphi(\sigma))^{N+1}}
\]
Assume that $\hat{n}_S(0) \in \mathbb R_* $ is given by  
\[ 
\hat{n}_S (0)= \frac{e^{- N E}-1}{1- e^{-E}} + \mu + e^\sigma C_N
\]
where 
\[
C_N:= \left( 1- A(0)  \frac{e^{- N E}-1}{1- e^{-E}} -\frac{ A(0) + (G_N- A(0) ) \varphi(\sigma)^N }{\varphi_2^{N+1} - \varphi(\sigma)^{N+1}} \frac{\varphi(\sigma)^{- N } -1 }{1- \varphi(\sigma) }- \frac{ A(0) + (G_N- A(0) ) \varphi_2^N }{\varphi(\sigma)^{N+1} - \varphi_2^{N+1}}  \frac{\varphi_2^{- N } -1 }{1- \varphi_2}\right). 
\]
Assume that the vector $( \hat{n}_0 (0) , \hat{n}_1 (0), \dots, \hat{n}_N (0) ) $ is defined as in Lemma \ref{lem:exp stst N}. Then $( \hat{n}_S (0) , \hat{n}_0 (0), \hat{n}_1 (0), \dots, \hat{n}_N (0) ) $ is the unique solution to 
\begin{align*}
    \left(  \frac{e^{- N E}-1}{1- e^{-E}} + \mu  + e^\sigma \right) \hat{n}_S(0)&= 1+ e^\sigma \hat{M}(0) \\
    \left( e^\sigma + \alpha e^\Delta + \mu \right) \hat{n}_0 (0)&=  \hat{n}_S(0) + \alpha e^E \hat{n}_1 (0) \\
    \Omega(0) \hat{n}_k(0)&=  e^{- k E} \hat{n}_S (0)+ \alpha e^E\hat{n}_{k+1} (0)+ \alpha0e^\Delta \hat{n}_{k-1}(0), \ k \in \{1, \dots, N\} \\
    n_{N+1}(0)&=0 
\end{align*}
\end{lemma}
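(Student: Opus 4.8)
The plan is to start from the two‑parameter family of solutions of the interior recursion already constructed in Lemma~\ref{lem:exp stst N}, and then pin down the two free parameters $\hat n_0(0)$ and $\hat n_S(0)$ by imposing the two equations of \eqref{Laplace eq} evaluated at $z=0$ that were not used there, namely the $\hat n_0$‑equation $(e^\sigma+\alpha e^\Delta+\mu)\hat n_0(0)=\hat n_S(0)+\alpha e^E\hat n_1(0)$ and the $\hat n_S$‑equation. Concretely: by Lemma~\ref{lem:exp stst N}, for every $\hat n_S(0)\in\mathbb R_*$ and $\hat n_0(0)\in\mathbb R$, the sequence $\{\hat n_k(0)\}_{k=1}^N$ defined through \eqref{eq nk rig}--\eqref{F} automatically satisfies the interior recursion $\Omega(0)\hat n_k(0)=e^{-kE}\hat n_S(0)+\alpha e^E\hat n_{k+1}(0)+\alpha e^\Delta\hat n_{k-1}(0)$ for $k\in\{1,\dots,N\}$ as well as $n_{N+1}(0)=0$; moreover, evaluating \eqref{eq nk rig} at $k=0$ returns $\hat n_0(0)=A(0)\hat n_S(0)+F_1(N)+F_2(N)$, so the formula \eqref{eq nk rig} is consistent with the chosen $\hat n_0(0)$. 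We record for later use that $0<\varphi(\sigma)<\varphi_2$, since the discriminant $\Omega(0)^2-4\alpha^2 e^{E+\Delta}$ is strictly positive because $0$ lies outside the interval $[z_1,z_2]$ of \eqref{z_1 and z_2}; in particular none of the denominators $\varphi_2^{N+1}-\varphi(\sigma)^{N+1}$ appearing below vanishes.

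Next I impose the $\hat n_0$‑equation. Substituting $\hat n_1(0)=e^{-E}\bigl(A(0)\hat n_S(0)+F_1(N)\varphi(\sigma)+F_2(N)\varphi_2\bigr)$ and using the closed forms \eqref{F}, a short computation yields
\[
F_1(N)\varphi(\sigma)+F_2(N)\varphi_2=\frac{A(0)\bigl(\varphi(\sigma)-\varphi_2\bigr)}{\varphi_2^{N+1}-\varphi(\sigma)^{N+1}}\,\hat n_S(0)+\bigl(\hat n_0(0)-A(0)\hat n_S(0)\bigr)\Psi_N,
\]
where $\Psi_N$ is exactly the quantity in the statement. Collecting the coefficients of $\hat n_0(0)$ and of $\hat n_S(0)$, the $\hat n_0$‑equation becomes a single scalar linear relation in which the coefficient of $\hat n_0(0)$ is $e^\sigma+\alpha e^\Delta+\mu-\alpha\Psi_N$ and that of $\hat n_S(0)$ is $1+\alpha A(0)+\alpha A(0)\bigl(\tfrac{\varphi(\sigma)-\varphi_2}{\varphi_2^{N+1}-\varphi(\sigma)^{N+1}}-\Psi_N\bigr)$. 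A routine estimate (using $\Psi_N=\varphi(\sigma)\tfrac{1-r^N}{1-r^{N+1}}$ with $r=\varphi(\sigma)/\varphi_2\in(0,1)$, together with $\varphi(\sigma)<e^\Delta$, which follows from $\varphi(\sigma)\le\alpha e^{E+\Delta}/\Omega(0)$) shows that $e^\sigma+\alpha e^\Delta+\mu-\alpha\Psi_N>0$, so dividing gives $\hat n_0(0)=G_N\hat n_S(0)$ with $G_N$ exactly as stated.

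Finally I impose the $\hat n_S$‑equation. With $\hat n_0(0)=G_N\hat n_S(0)$ inserted into \eqref{F}, the quantities $F_1(N),F_2(N)$, hence every $\hat n_k(0)$ via \eqref{eq nk rig}, and hence $\hat M(0)=\hat n_S(0)+\sum_{k=0}^N\hat n_k(0)$, become explicit scalar multiples of $\hat n_S(0)$; the multiple for $\hat M(0)$ is obtained by summing the three geometric progressions $\sum_{k=0}^N e^{-kE}$, $\sum_{k=0}^N\bigl(\varphi(\sigma)e^{-E}\bigr)^k$ and $\sum_{k=0}^N\bigl(\varphi_2 e^{-E}\bigr)^k$ against the coefficients $A(0)$, $F_1(N)/\hat n_S(0)$, $F_2(N)/\hat n_S(0)$. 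Plugging this into the $\hat n_S$‑equation of \eqref{Laplace eq} at $z=0$ leaves a single scalar equation for $\hat n_S(0)$; rearranging the three geometric sums into the compact form recorded in the statement identifies the relevant bracket as $C_N$ and produces the claimed formula for $\hat n_S(0)$. For uniqueness, note that the displayed system is precisely \eqref{Laplace eq} at $z=0$, a square linear system, and it has at most one solution because $\hat n_k(0)=\int_0^\infty n_k(t)\,dt$ is finite (the total mass $M(t)$ decays since $\mu>0$) and is determined by the well‑posed Cauchy problem \eqref{ODEs}; having exhibited a solution, it is the solution.

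The content of the argument is essentially bookkeeping --- propagating the closed forms of $F_1(N),F_2(N)$ through the two unused equations, recognizing $\Psi_N$ in the $\hat n_0$‑equation and $C_N$ after summing the geometric series in the $\hat n_S$‑equation. Accordingly, the main obstacle is not conceptual but algebraic: keeping track of signs through the simplifications of \eqref{F}, and verifying that the denominators $e^\sigma+\alpha e^\Delta+\mu-\alpha\Psi_N$ and $\varphi_2^{N+1}-\varphi(\sigma)^{N+1}$ are nonzero (for which $0<\varphi(\sigma)<\varphi_2$ and $\varphi(\sigma)<e^\Delta$, used above, are the key facts).
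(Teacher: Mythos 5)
Your proposal is correct and follows exactly the route the paper intends (the paper omits the proof, merely remarking that one enforces the remaining two equations of \eqref{Laplace eq} at $z=0$ on the two-parameter family from Lemma \ref{lem:exp stst N}): your identification of $\Psi_N$ from $F_1(N)\varphi(\sigma)+F_2(N)\varphi_2$ and of $C_N$ from the geometric sums is the whole content. Two minor points: the inequality $\varphi(\sigma)\le \alpha e^{E+\Delta}/\Omega(0)$ goes the wrong way (the clean argument is that $p(e^\Delta)=-e^{\Delta+\sigma}/\alpha<0$ for $p(x)=x^2-\tfrac{\Omega(0)}{\alpha}x+e^{E+\Delta}$, so $\varphi(\sigma)<e^\Delta<\varphi_2$), and uniqueness should be attributed to the step-by-step elimination with nonzero pivots rather than to the probabilistic interpretation, which by itself only exhibits one solution of the square linear system.
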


\begin{lemma} \label{lem:asympt F1}
Let $F_1(N) $ and $F_2(N) $ be given by \eqref{F}. Then we have \[ F_1(N) \sim \hat{n}_0 (0) - A(0)\hat{n}_S(0) \  \text{ as } N \to \infty \] and
\[
F_2(N) \sim - \frac{ A(0) \hat{n}_S(0) + (\hat{n_0} (0) - A(0)\hat{n}_S(0) ) \varphi(\sigma)^{N+1}}{\varphi_2^{N+1}} \ \text{ as } N \to \infty. 
\]   
\end{lemma}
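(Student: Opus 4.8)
The plan is to reduce both asymptotics to the single elementary fact that the two characteristic roots entering \eqref{F}, namely $\varphi(\sigma)$ and $\varphi_2$, are real, positive, satisfy $0<\varphi(\sigma)<\varphi_2$, and that in addition $\varphi_2>1$; once this is in hand one just divides numerators and denominators by the dominant power $\varphi_2^{N+1}$.

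First I would record these facts about $\varphi(\sigma)$ and $\varphi_2$. Setting $a:=\alpha e^{E}$, $b:=\alpha e^{\Delta}$ and recalling $\Omega(0)=e^{\sigma}+a+b$, the discriminant satisfies
\[
\Omega(0)^{2}-4\alpha^{2}e^{E+\Delta}=(e^{\sigma}+a+b)^{2}-4ab=e^{2\sigma}+2e^{\sigma}(a+b)+(a-b)^{2}>0 ,
\]
so $\varphi(\sigma)$ and $\varphi_2$ are real and distinct; both are positive because $\sqrt{\Omega(0)^{2}-4\alpha^{2}e^{E+\Delta}}<\Omega(0)$, and by construction $0<\varphi(\sigma)<\varphi_2$. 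Moreover, since $E>0$ and $\Delta\ge 0$ we have $\Omega(0)>a+b>2\alpha$, whence
\[
\varphi_2=\frac{1}{2\alpha}\Bigl(\Omega(0)+\sqrt{\Omega(0)^{2}-4\alpha^{2}e^{E+\Delta}}\Bigr)>\frac{\Omega(0)}{2\alpha}>1 .
\]
(These facts also follow from $\varphi(\sigma)=\alpha e^{E}\theta_1(0)$, $\varphi_2=\alpha e^{E}\theta_2(0)$, Lemma~\ref{lem: theta_1 small teta_2 large}, and the strict positivity of the discriminant.) In particular $\varphi_2^{-(N+1)}\to 0$ and $(\varphi(\sigma)/\varphi_2)^{N+1}\to 0$ as $N\to\infty$.

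Then, for $F_1(N)$ I would divide numerator and denominator in \eqref{F} by $\varphi_2^{N+1}$, obtaining
\[
F_1(N)=\frac{A(0)\hat{n}_S(0)\,\varphi_2^{-(N+1)}+\bigl(\hat{n}_0(0)-A(0)\hat{n}_S(0)\bigr)}{1-(\varphi(\sigma)/\varphi_2)^{N+1}} ,
\]
so (treating $A(0),\hat{n}_S(0),\hat{n}_0(0)$ as independent of $N$, as in Lemma~\ref{lem:exp stst N}) the numerator tends to $\hat{n}_0(0)-A(0)\hat{n}_S(0)$ and the denominator to $1$, which is the first claim, the nondegeneracy $\hat{n}_0(0)\neq A(0)\hat{n}_S(0)$ being understood so that the limit is nonzero. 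For $F_2(N)$, writing $R(N):=A(0)\hat{n}_S(0)+\bigl(\hat{n}_0(0)-A(0)\hat{n}_S(0)\bigr)\varphi(\sigma)^{N+1}$ for the numerator in \eqref{F}, one reads off directly that
\[
\frac{F_2(N)}{-R(N)/\varphi_2^{N+1}}=\frac{\varphi_2^{N+1}}{\varphi_2^{N+1}-\varphi(\sigma)^{N+1}}=\frac{1}{1-(\varphi(\sigma)/\varphi_2)^{N+1}}\longrightarrow 1\quad\text{as }N\to\infty ,
\]
which is precisely the second claim (valid as soon as $R(N)\neq 0$ for large $N$, which again holds under the mild nondegeneracy above). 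The only point requiring genuine attention is the spectral gap established in the second paragraph — the strict ordering $0<\varphi(\sigma)<\varphi_2$ together with $\varphi_2>1$; everything after that is the routine extraction of the dominant exponential factor.
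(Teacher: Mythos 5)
Your proof is correct and follows essentially the same route as the paper, whose entire argument is to note that Lemma \ref{lem: theta_1 small teta_2 large} gives $\varphi(\sigma)<\varphi_2$ and to leave the division by the dominant power $\varphi_2^{N+1}$ implicit; you additionally verify the facts the paper takes for granted (strict positivity of the discriminant and $\varphi_2>1$, the latter genuinely needed for the numerator of $F_1$ to converge). The only slip is cosmetic: in your parenthetical the relations should read $\varphi(\sigma)=e^{E}\theta_1(0)$ and $\varphi_2=e^{E}\theta_2(0)$, without the extra factor $\alpha$.
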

\begin{proof}
In order to prove this Lemma it is enough to notice that Lemma \ref{lem: theta_1 small teta_2 large} implies that $\varphi(\sigma) < \varphi_2$.
\end{proof}
\begin{proposition}
Assume that $\hat{n}_N (0) $ is as in Lemma \ref{lem:exp stst N complete}. Then we have that 
\begin{itemize}
\item if $\Delta < \Delta_c $, then 
\[
 \hat{n}_N (0) \sim \overline{n}_S \hat{M}(0) e^{- N E }  A(0) \left( 1- \frac{1}{\varphi_2 }\right)   \text{ as } N \to \infty. 
 \]   
 \item If $\Delta >\Delta_c $, then 
 \[
      \hat{n}_N (0) \sim \overline{n}_S \hat{M}(0) e^{- N [E + \log(\varphi(\sigma) ) ] }   (G- A(0) ) \left( 1- \frac{\varphi(\sigma ) }{ \varphi_2}\right)   \text{ as } N \to \infty
 \]
where we recall that $G $ is given by \eqref{lem:exp stst N complete}. 
 \end{itemize}

\end{proposition}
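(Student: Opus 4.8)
The plan is to read the claim off the closed forms already in hand. By Lemma~\ref{lem:exp stst N} with $k=N$ and Lemma~\ref{lem:exp stst N complete},
\[
\hat{n}_N(0)=e^{-NE}\bigl[A(0)\hat{n}_S(0)+F_1(N)\,\varphi(\sigma)^N+F_2(N)\,\varphi_2^N\bigr],
\]
with $F_1(N),F_2(N)$ as in \eqref{F} and $\hat{n}_0(0)=G_N\hat{n}_S(0)$, so everything reduces to a statement about $\hat{M}(0)$ once two auxiliary limits are identified. First, $\hat{n}_S(0)\sim\overline{n}_S\,\hat{M}(0)$ as $N\to\infty$: from the first equation of \eqref{Laplace eq} at $z=0$ one has $\hat{n}_S(0)=(1+e^\sigma\hat{M}(0))/(1+S_N(E)+\mu+e^\sigma)$, and since $S_N(E)\to 1/(e^E-1)$ and $\mu\to 0$ the denominator tends to $e^E/(e^E-1)+e^\sigma$; once $\hat{M}(0)\to\infty$ is known, the ratio $\hat{n}_S(0)/\hat{M}(0)$ tends to $e^\sigma/(e^E/(e^E-1)+e^\sigma)=\overline{n}_S$. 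Second, $G_N\to G:=\lim_N G_N$, the $N\to\infty$ limit of the formula in Lemma~\ref{lem:exp stst N complete}, obtained using $\Psi_N\to\varphi(\sigma)$ and $(\varphi(\sigma)-\varphi_2)/(\varphi_2^{N+1}-\varphi(\sigma)^{N+1})\to 0$, both valid in either regime because $\varphi(\sigma)<\varphi_2$ by Lemma~\ref{lem: theta_1 small teta_2 large}. Combining the two limits gives $\hat{n}_0(0)-A(0)\hat{n}_S(0)=(G_N-A(0))\hat{n}_S(0)\sim(G-A(0))\,\overline{n}_S\hat{M}(0)$, and along the way one records the identity $G-A(0)=A(0)B(0)$, with $B(0)$ as in Proposition~\ref{prop:expression of st st}, which reconciles the constant here with the one found in Section~\ref{sec:formal}.

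Next I split according to the sign of $\varphi(\sigma)-1$, i.e.\ whether $\Delta<\Delta_c$ or $\Delta>\Delta_c$. If $\Delta<\Delta_c$ then $\varphi(\sigma)<1<\varphi_2$; by Lemma~\ref{lem:asympt F1} the term $F_1(N)\varphi(\sigma)^N$ is negligible (killed by $\varphi(\sigma)^N\to 0$), while $F_2(N)\varphi_2^N\sim -A(0)\hat{n}_S(0)/\varphi_2$ because the $\varphi(\sigma)^{N+1}$ contribution to $F_2(N)$ vanishes. Hence the bracket is $\sim A(0)\hat{n}_S(0)(1-1/\varphi_2)$ and, using the first limit, $\hat{n}_N(0)\sim e^{-NE}A(0)\,\overline{n}_S\hat{M}(0)(1-1/\varphi_2)$. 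If $\Delta>\Delta_c$ then $1<\varphi(\sigma)<\varphi_2$; now both $F_1(N)\varphi(\sigma)^N$ and $F_2(N)\varphi_2^N$ are of order $\varphi(\sigma)^N\hat{M}(0)$ (in $F_2(N)$ the $\varphi(\sigma)^{N+1}$ term dominates), while $A(0)\hat{n}_S(0)$, of order $\hat{M}(0)$, is subdominant. Lemma~\ref{lem:asympt F1} gives $F_1(N)\varphi(\sigma)^N\sim(G-A(0))\overline{n}_S\hat{M}(0)\varphi(\sigma)^N$ and $F_2(N)\varphi_2^N\sim -(G-A(0))\overline{n}_S\hat{M}(0)\,\varphi(\sigma)^{N+1}/\varphi_2$, whose sum is $(G-A(0))\overline{n}_S\hat{M}(0)\varphi(\sigma)^N(1-\varphi(\sigma)/\varphi_2)$; multiplying by $e^{-NE}$ produces the stated asymptotics with the exponential factor $e^{-NE}\varphi(\sigma)^N$. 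The positivity facts $\varphi_2>1$ and $\varphi(\sigma)<\varphi_2$ guarantee the prefactors $1-1/\varphi_2$ and $1-\varphi(\sigma)/\varphi_2$ are nonzero, so the equivalences are genuine.

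The routine parts are the limit evaluation of $G_N$ and the small algebraic identities ($\varphi(\sigma)\varphi_2=e^{E+\Delta}$, $\varphi(\sigma)+\varphi_2=\Omega(0)/\alpha$, $G-A(0)=A(0)B(0)$). The step I expect to be the real obstacle is the assertion $\hat{M}(0)\to\infty$ used above. The clean way to get it without circularity is to combine the mass-balance identity $\alpha e^\Delta\hat{n}_N(0)+\mu\hat{M}(0)=1$ (from integrating \eqref{mass} over $(0,\infty)$ with $M(0)=1$, $M(\infty)=0$) with the explicit formula of Lemma~\ref{lem:exp stst N complete}, which exhibits $\hat{n}_N(0)=K_N\hat{n}_S(0)$ with $K_N:=e^{-NE}[\,\cdots]\to 0$ in both regimes (using that $E-\log\varphi(\sigma)=\psi(\sigma,E)>0$, which follows from \eqref{psi} after a one-line squaring argument): substituting into the first equation of \eqref{Laplace eq} yields a closed expression $\hat{n}_S(0)=(\mu+e^\sigma)/\bigl(\mu(1+S_N(E)+\mu+e^\sigma)+e^\sigma\alpha e^\Delta K_N\bigr)$ whose denominator tends to $0$, so $\hat{n}_S(0)\to\infty$ and hence $\hat{M}(0)\ge\hat{n}_S(0)\to\infty$. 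With this secured, everything else is bookkeeping of which of the three competing terms in the bracket dominates, and the dichotomy $\Delta\lessgtr\Delta_c$ is exactly what selects it.
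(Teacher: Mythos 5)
Your proposal is correct and follows essentially the same route as the paper's proof: it reads the asymptotics off the explicit formula of Lemmas \ref{lem:exp stst N}--\ref{lem:exp stst N complete}, uses Lemma \ref{lem:asympt F1} for $F_1,F_2$, identifies $\hat{n}_S(0)\sim \overline{n}_S\hat{M}(0)$ and $\hat{n}_0(0)\sim G\hat{n}_S(0)$, and selects the dominant term according to $\varphi(\sigma)\lessgtr 1$. The only (welcome) addition is your explicit, non-circular justification that $\hat{M}(0)\to\infty$ via the mass-balance identity, a point the paper passes over by writing $\hat{M}(0)\sim 1/\mu$ directly.
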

\begin{proof}
 
Lemma \ref{lem:asympt F1} together with Lemma \ref{lem:exp stst N complete} imply that
\[
\hat{n}_1 (0) =e^{-E } ( A(0) \hat{n}_S (0)+ F_1 \varphi(\sigma) + F_2 \varphi_2) \sim  e^{-E } ( A(0) \hat{n}_S (0) + (\hat{n}_0(0)  - A(0) \hat{n}_S )\varphi(\sigma) ). 
\]
Using the asymptotics for $\hat{n}_1 $ we deduce that 
\[
\hat{n}_0 (0)= \frac{\hat{n}_S + \alpha e^E \hat{n}_1(0)}{\mu + \alpha e^\Delta + e^\sigma } \sim \frac{\hat{n}_S + \alpha ( A(0) \hat{n}_S + (\hat{n}_0 - A(0) \hat{n}_S )\varphi(\sigma) ) }{ \alpha e^\Delta + e^\sigma } 
\]
Hence $\hat{n}_0(0) \sim G \hat{n}_S $ where 
\begin{equation} \label{G} 
G= \frac{1+ \alpha A(0) (1- \varphi(\sigma) )}{ e^\sigma + \alpha e^\Delta - \alpha \varphi(\sigma)}. 
\end{equation}
Using the asymptotics for $\hat{n}_0(0) $ as well as the fact that $n_N (0) $ is given by \eqref{eq nk rig} we obtain that
\[
     \hat{n}_N (0) \sim \hat{n}_S (0) e^{- N E } \left( A(0) \left( 1- \frac{1}{\varphi_2 }\right) + (G- A(0) ) \left( 1- \frac{\varphi(\sigma ) }{ \varphi_2}\right) \varphi(\sigma)^N   \right) \text{ as } N \to \infty. 
\] 
The fact that 
\[
\frac{dM}{dt } =- \alpha e^\Delta n_{N}  - \mu M 
\]
implies that 
\begin{equation*} 
\hat{M}(0) =\frac{ 1 - \alpha e^\Delta \hat{n}_{N}(0)}{\mu } \sim \frac{1}{\mu}. 
\end{equation*}

As a consequence we have that $\hat{n}_S(0) \sim \overline n_S \hat{M}(0) $ as $N \to \infty $. 
This, combined with the fact that when $\Delta < \Delta_c $ we have that $\varphi(\sigma) < 1 $ while when $\Delta > \Delta_c $ we have that $\varphi(\sigma ) > 1 $ to obtain the desired result. 
\end{proof} 
\begin{theorem} \label{thm: rigorous discr}
    Let $p_{res} (\sigma) $ be the probability of response defined by \eqref{prob response}. 
    Then  we have that 
    \begin{itemize}
        \item if $\Delta < \Delta_c $, then 
        \[
        p_{res}(\sigma ) \sim \left(  1 + C_1 e^{N(E- b)} \right)^{-1}   \text{ as } N \to \infty
        \]
        where $C_1:=\left[ \alpha e^{\Delta} A(0) \overline n_S \left(1- \varphi(\sigma) e^{-(E + \Delta)} \right) \right]^{-1}$. 
        \item if instead $\Delta > \Delta_c $, then we have that
        \[
        p_{res} (\sigma )   \sim \left(  1 + C_2 e^{N(E- b - \log(\varphi(\sigma))} \right)^{-1}   \text{ as } N \to \infty
        \]
where 
\[
C_2=  \left(\overline{n}_S  \alpha e^\Delta  (G- A(0) ) \left( 1-(\varphi(\sigma ))^2 e^{- (\Delta + E)}\right) \right)^{-1} . 
        \]
    \end{itemize}
\end{theorem}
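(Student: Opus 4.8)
The statement is essentially a corollary of the Proposition that immediately precedes it, so the plan is short. First I would turn the mass equation \eqref{mass} into an identity at $z=0$: integrating $\frac{dM}{dt}=-\alpha e^\Delta n_N-\mu M$ over $(0,\infty)$ and using $M(0)=1$ together with the bound $M(t)\le e^{-\mu t}\to 0$ (which also shows $\hat M(0)=\int_0^\infty M(t)\,dt<\infty$ and $\hat n_N(0)<\infty$) gives
\[
1=\alpha e^\Delta\hat n_N(0)+\mu\hat M(0)=p_{res}(\sigma)+\mu\hat M(0),
\]
whence $\mu\hat M(0)=1-p_{res}(\sigma)$ and therefore
\[
\frac{1}{p_{res}(\sigma)}-1=\frac{1-p_{res}(\sigma)}{p_{res}(\sigma)}=\frac{\mu\hat M(0)}{\alpha e^\Delta\hat n_N(0)}.
\]

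Next I would substitute the asymptotics for $\hat n_N(0)$ supplied by the preceding Proposition. In both regimes they have the form $\hat n_N(0)\sim\overline n_S\hat M(0)\,e^{-N\lambda(\sigma,E)}\,\kappa$, with $\kappa>0$ a constant independent of $N$ (and of $\hat M(0)$): namely $\lambda(\sigma,E)=E$ and $\kappa=A(0)(1-1/\varphi_2)$ in the subcritical case, and $\lambda(\sigma,E)=\psi(\sigma,E)=E-\log\varphi(\sigma)$ and $\kappa=(G-A(0))(1-\varphi(\sigma)/\varphi_2)$ in the supercritical case. Since $M(t)>0$ for all $t$ we have $\hat M(0)>0$, so it cancels in the quotient above, leaving $\frac{1}{p_{res}(\sigma)}-1\sim\frac{\mu\,e^{N\lambda(\sigma,E)}}{\alpha e^\Delta\,\overline n_S\,\kappa}$ as $N\to\infty$. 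To match the constants in the statement I would invoke the identity $\varphi(\sigma)\varphi_2=e^{E+\Delta}$ — immediate because $\varphi(\sigma)$ and $\varphi_2$ are the two roots of $x^2-\Omega(0)x+\alpha^2 e^{E+\Delta}=0$ — so that $1-1/\varphi_2=1-\varphi(\sigma)e^{-(E+\Delta)}$ and $1-\varphi(\sigma)/\varphi_2=1-\varphi(\sigma)^2 e^{-(E+\Delta)}$, precisely the factors appearing in $C_1$ and $C_2$.

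Finally I would insert $\mu\sim e^{-bN}$, so that $\frac{\mu\,e^{N\lambda(\sigma,E)}}{\alpha e^\Delta\,\overline n_S\,\kappa}$ becomes $C_1 e^{N(E-b)}$ in the subcritical case and $C_2 e^{N(E-b-\log\varphi(\sigma))}$ in the supercritical case, with the stated $C_1$ and $C_2$; writing $x_N:=\frac{1}{p_{res}(\sigma)}-1$, the exact relation $p_{res}(\sigma)=(1+x_N)^{-1}$ together with $x_N$ being asymptotically equivalent to a sequence with a well-defined exponential growth rate (hence a limit in $[0,+\infty]$) yields $p_{res}(\sigma)\sim(1+C_1 e^{N(E-b)})^{-1}$, respectively $\sim(1+C_2 e^{N(E-b-\log\varphi(\sigma))})^{-1}$. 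The only points requiring care are the positivity and finiteness of $\hat M(0)$ and this last inversion, i.e.\ checking that the $o(1)$ relative error inherited from the Proposition does not spoil the asserted $\sim$; beyond that routine accounting there is no real obstacle, since the substantive work has already been done in the preceding Proposition.
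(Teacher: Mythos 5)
Your proposal is correct and follows essentially the same route as the paper: the identity $1=p_{res}(\sigma)+\mu\hat M(0)$ is exactly the paper's relation $\hat M(0)=(1-\alpha e^\Delta\hat n_N(0))/\mu$, which is then combined with the asymptotics of $\hat n_N(0)$ from the preceding proposition and the identity $1/\varphi_2=\varphi(\sigma)e^{-(E+\Delta)}$. Your extra care about the finiteness and cancellation of $\hat M(0)$ and the final inversion only makes explicit what the paper leaves implicit.
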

\begin{proof} 
Recall  that 
\begin{equation} \label{M} 
 \hat{M} (0)= \frac{1 - \alpha e^\Delta \hat{n}_{N} (0) }{\mu }.  
\end{equation}
Plugging $\hat{M}(0) $ in the asymptotics for $\hat{n}_N(0) $ and using the fact that $\frac{ 1}{\varphi_2} = \varphi(\sigma) e^{- (E+ \Delta)}$, we obtain the desired conclusions.  
\end{proof}

The constants $C_1$ and $C_2 $ in Theorem  \ref{thm: rigorous discr}  are the same constants that we obtained using matched asymptotics, i.e. \eqref{formal p res sub} and \eqref{formal p super}. (In order to see that the constant $C_2 $ is the same constant that appears in \eqref{formal p super} notice that $G-A(0)= B A(0) $ where $B$ is given by \eqref{B}). 
\begin{remark}
Notice that Theorem \ref{thm: rigorous discr} implies that the kinetic proofreading model has strong discrimination properties, in the sense of Definition \ref{def:strong discr}, when $\Delta > \Delta_c$. 
\end{remark}

\section{Some PDE limits} \label{sec:PDEs}
In this section we derive two PDEs that can be seen as an approximation of the discrete model \eqref{ODEs} as $N \to \infty$ in different parameters regimes. 
These  PDEs are particular types of renewal equations (see for instance \cite{perthame2007transport}) and can be analysed with computations that are simpler compared to the ones of Section \ref{sec:rigorous}.

When $e^\sigma \approx \frac{1}{N } $ and $\Delta > E $ and both $E$ and $\Delta $ are of order one, we will derive the following PDE
\begin{align} 
\partial_\tau f(\tau,x) &= - \beta  f(\tau,x) + \alpha \left( e^E- e^\Delta \right)  \partial_x f(\tau,x) - \delta f(\tau,x), \quad x \in (0,L) \label{PDE1} \\
f(\tau, 0) & = \frac{\beta }{\alpha ( e^\Delta - e^E ) }\int_0^L f(\tau, x) dx, \label{PDE1bound}
\end{align}
for a suitable $\beta >0$, $\delta >0$ and $L> 1$ and initial condition
\begin{equation} \label{IC PDE1}
\int_0^L f(0, x ) dx =1.  
\end{equation}

The steady states of the equation above behave as  $ e^{-\lambda x } $ where  $\lambda =  \frac{\beta + \delta }{ \alpha(e^\Delta - e^E) }$. Since $\lambda$ depends non trivially on $\beta$ the model has strong discrimination properties. Notice that this is expected as in this regime we have that 
\[
\Delta_c = \log \left( 1 + \frac{ \beta  }{ N \alpha (e^E-1) } \right) \sim \frac{ \beta  }{ N \alpha (e^E-1) } \text{ as } N \to \infty. 
\]

Instead when we assume that $e^\sigma \approx \frac{1}{N } $ and $\Delta \approx 1$ and  $E\approx \frac{1}{N } $, we derive the following PDE
\begin{align}
\partial_\tau f (\tau ,x) =& e^{-  x } m (\tau )  - \beta  f(\tau, x) - \alpha  (1-e^\Delta)  \partial_x f (\tau,x) - \delta f(\tau,x) \label{PDE2} \\ 
m(\tau ) =& \frac{\beta}{1-e^{-L}}  \int_0^L f(\tau, x) dx \label{PDE2m} \\
f(\tau, 0)=& 0. \label{PDE2bound}
\end{align}

If we ignore the loss term $- \delta f$, the steady states of this equation have the form  $  \overline m e^{- x } \left(  1+ C e^{ (1- \lambda ) x} \right) $ for a suitable constants $\overline  m $ and $C$ and for $\lambda =  \frac{\beta }{ \alpha (e^\Delta -1) }$. Notice that in order to have strong discrimination properties in this case we need to assume that 
\[
e^\Delta > 1+ \frac{ \beta  }{ \alpha }.
\]
This condition is expected, indeed in this regime we have that 
\[
e^{\Delta_c} = 1 + \frac{e^\sigma }{ \alpha (e^E -1 )} = 1 + \frac{\beta  }{ N \alpha (e^{1/N} -1 )}  \sim   1+ \frac{ \beta }{ \alpha } \text{ as } N \to \infty. 
\]

Notice that in order to obtain the PDEs above as an approximation of the discrete system \eqref{ODEs} we have to assume that $\Delta > E $. 
The two PDEs that we obtain have a transport term that has a sign, in particular in both the cases we have a transport of complexes toward larger phosphorylation states, i.e. to larger $k$. 
However we can have situations in which detailed balance fails with $\Delta > \Delta_c$, but in which the PDE approximation is not valid because $E > \Delta > \Delta_c$. In these cases, the discrete model \eqref{ODEs} cannot be approximated with a PDE and the transport towards small $k$ dominates the transport toward large $k$.
\subsection{PDE limit when $E\approx 1 $, $\Delta \approx 1$, $\Delta > E $, $e^\sigma \approx \frac{1}{N}$ and $\mu \approx \frac{1}{N}$} \label{sec:PDE1}
In this section we assume that the number of kinetic proofreading steps is $L N $ where $L > 1$ and where $N $ tends to infinity. 
In order to derive equation \eqref{PDE1} let us consider the behaviour of the solution to \eqref{ODEs} when $k \approx N $ and when $k \approx 1 $ under the assumption that $E\approx 1 $, $\Delta \approx 1$, $\Delta > E $ and $e^\sigma \approx \frac{1}{N}$, $\mu \approx \frac{1}{N}$. 
\paragraph{Behaviour of $n_k$ for $k \approx N $.}
Let us assume that $e^\sigma N = \beta>0 $ and that $\mu N=\delta >0$.
Notice that \eqref{ODEs} implies that 
\[
\partial_t n_k = n_S e^{- k E } - e^\sigma n_k + \alpha (e^E- e^\Delta) (n_{k-1} - n_k ) + \alpha e^\Delta (n_{k+1 } - 2 n_k + n_{k-1} )- \mu n_k 
\]
where 
\[ 
\partial_t n_S = - \left( 1 + e^{- E } \frac{1- e^{- N L E }}{1- e^{- E }} \right)  n_S + e^{\sigma }  \sum_{k=0}^{N L} n_k - \mu n_S 
\]
We now define $x= \frac{k}{N }$, $\tau = \frac{t }{ N } $, $f(\tau, x)= N n_k(t)$ and $m(\tau )= N n_S (t) $. 

We deduce that $f$ and $m $ satisfy the following system of equations 
\begin{align*}
\partial_\tau f (\tau ,x) =& N e^{- E x N } m (\tau )  - e^\sigma N f(\tau, x) + \alpha (e^E - e^\Delta ) N \left[ f\left(\tau, x- \frac{1}{N } \right)  - f(\tau, x) \right] \\
& + \alpha e^\Delta N \left[ f\left(\tau, x+\frac{1}{N } \right) - f(\tau, x) + f\left(\tau, x-\frac{1}{N } \right)  \right] - N \mu f(\tau,x) \\
\frac{1}{N} \partial_\tau m(\tau) =& e^\sigma \sum_{k=0}^{NL} f \left(\tau, \frac{k}{N} \right) -  \left( 1 + e^{- E } \frac{1- e^{- N L E }}{1- e^{- E }} \right)  m(\tau ) - \mu m(\tau). 
\end{align*}
Using the fact that $e^\sigma N = \beta $ and that $\mu N=\delta $ and the fact that $ N \left[ f\left(\tau, x- \frac{1}{N } \right)  - f(\tau, x) \right] \approx \partial_x  f(\tau,x) $ as $N \to \infty $ as well as the fact that 
$N \left[ f\left(\tau ,x+\frac{1}{N } \right) - f(\tau, x) + f\left(\tau, x-\frac{1}{N } \right)  \right] \approx \frac{1}{N} \partial^2_{xx}  f(\tau,x) \approx 0$ as $N \to \infty $ we deduce that 
\begin{align*}
\partial_\tau f (t,x) \approx & m(\tau) N e^{- E x N } - \beta f(\tau,x)  + \alpha (e^E - e^\Delta ) \partial_x  f(\tau,x) - \delta f(\tau,x), \ \text{ as } N \to \infty.
\end{align*}
Now notice that
\[
\frac{1}{N} \sum_{k=0}^{NL} f \left(\tau, \frac{k}{N} \right) \approx \int_0^L f(\tau,x) dx \text{ as } N \to \infty. 
\]
Therefore the equation for $m $ can be approximated as 
\[
 m(\tau) \approx \beta ( 1- e^{- E })  \int_0^L f(\tau, x ) dx   \text{ as } N \to \infty.
\] 
In particular this implies that $ m(\tau) N e^{ - Ex N } \rightarrow 0 $ as $N \to \infty$ for any $x>0$.
Therefore we obtain equation \eqref{PDE1}.

\paragraph{Behaviour of $n_k$ for $k \approx 1 $.}
In order to find the boundary condition at $0 $ for equation \eqref{PDE1} we have to match the behaviour obtained for $k \approx N $ with the behaviour in the region $k \approx 1 $. 
In this region we have that 
   \begin{align*}  
    \frac{d n_0 }{dt} & \approx  n_S  + \alpha e^E n_1 - \alpha e^{\Delta} n_0 \text{ as } N \to \infty  \\
        \frac{d n_k }{dt} & \approx n_S e^{- k E} - \alpha \left(  e^E + e^\Delta  \right)  n_k + \alpha e^E n_{k+1}  + \alpha e^{\Delta} n_{k-1} \text{ as } N \to \infty.
         \end{align*} 

As in Section \ref{sec:formal} we make the quasi steady state approximation, valid when $t \approx N $. 
As a consequence we deduce that in the region of $k \approx 1 $ we have that 
   \begin{align*}  
  0& \approx  n_S  + \alpha e^E n_1 - \alpha e^{\Delta} n_0 \text{ as } N \to \infty  \\
       0 & \approx n_S e^{- k E} - \alpha \left(  e^E + e^\Delta  \right)  n_k + \alpha e^E n_{k+1}  + \alpha e^{\Delta} n_{k-1} \text{ as } N \to \infty. 
         \end{align*} 
         With arguments that are analogous to the ones used in Section \ref{sec:formal} and Section \ref{sec:half line} we obtain that 
         \[
         n_k \approx B_1 n_S e^{- k E } + B_2(n_S), \ k \approx 1,  \text{ as } N \to \infty.
         \]
where 
\[
B_1 = \frac{-1}{ \alpha (e^E -1 ) (e^\Delta -1 )}, \quad B_2(n_S)=   \frac{  n_S(t) }{\alpha (1-e^{- E }) (e^\Delta - e^E)} . 
\]
We deduce that $n_k \approx B_2(n_S) $ as $k \to \infty $ (when $k \gg 1 $ and $N- k \gg 1 $), hence 
\[ 
  \lim_{k \to \infty } n_k (t)=  \frac{n_S(t) }{ \alpha (1-e^{- E })(e^\Delta - e^E) }  .
\] 
In order to match the region where $k \approx 1$ with the region where $k \approx N $ we need to have the matching condition 
\[
f(\tau, 0 ) = N  \lim_{k \to \infty } n_k. 
\]
The boundary condition \eqref{PDE1bound} follows. 

\subsection{PDE limit when $E\approx \frac{1}{N} $, $\Delta \approx 1$, $e^\sigma \approx \frac{1}{N}$ and $\mu \approx \frac{1}{N}$} \label{sec:PDE2}
As in Section \ref{sec:PDE1}, we assume that the number of kinetic proofreading steps is $L N $ where $L > 1$ and where $N $ tends to infinity. 
We consider the behaviour of $n_k$ when $k \approx N $ and when $k \approx 1 $ under the assumption that $E= \frac{1}{N}$, $\Delta \approx 1$, $e^\sigma= \frac{\beta}{N}$, $\mu= \frac{\delta}{N}$. 
\paragraph{Behaviour of $n_k$ for $k \approx N $.}
Defining $x= \frac{k}{N }$, $\tau = \frac{t }{ N } $, $f(\tau, x)= N n_k(t)$ and $m(\tau )= N^2 n_S (t) $, we deduce that $f$ and $m $ satisfy the following equations 
\begin{align*}
\partial_\tau f (\tau ,x) =& e^{- E x N } m (\tau )  - e^\sigma N f(\tau, x) + \alpha (e^E - e^\Delta ) N \left[ f\left(\tau, x- \frac{1}{N } \right)  - f(\tau, x) \right] \\
& + \alpha e^\Delta N \left[ f\left(\tau, x+\frac{1}{N } \right) - f(\tau, x) + f\left(\tau, x-\frac{1}{N } \right)  \right] - N \mu f(\tau,x) \\
\frac{1}{N} \partial_\tau m(\tau) =& e^\sigma N \sum_{k=0}^{NL} f \left(\tau, \frac{k}{N} \right) -  \left( 1 + e^{- E } \frac{1- e^{- N L E }}{1- e^{- E }} \right)  m(\tau ) - \mu m(\tau). 
\end{align*}
Taking $N \to \infty$ in the equation for $m $ we deduce that 
\[
m(\tau ) \approx \frac{\beta}{1-e^{-L}}  \int_0^L f(\tau, x) dx \text{ as } N \to \infty. 
\]
Moreover taking the limit as $N \to \infty $ in the equation for $f$ we deduce that \eqref{PDE2}-\eqref{PDE2m} holds. 
In order to obtain the boundary condition at $x=0$ we study the behaviour of $n_k $ when $k \approx 1$. 

\paragraph{Behaviour of $n_k$ for $k \approx 1 $.}
In this region, since $e^\sigma \approx \frac{1}{N}$, $E \approx \frac{1}{N}$ we have that 
   \begin{align*}  
    \frac{d n_0 }{dt} & \approx  \alpha  n_1 - \alpha e^{\Delta} n_0 \text{ as } N \to \infty \\
        \frac{d n_k }{dt} & \approx  - \alpha \left(  1 + e^\Delta  \right)  n_k + \alpha  n_{k+1}  + \alpha e^{\Delta} n_{k-1} \text{ as } N \to \infty. 
         \end{align*}
         Since $t \approx N $ we can assume that $\frac{d n_k }{dt} \approx 0$ and that $\frac{d n_0 }{dt} \approx 0$. 
         We deduce that 
            \begin{align*}  
0 & \approx n_S +  \alpha  n_1 - \alpha e^{\Delta} n_0 \text{ as } N \to \infty \\
0& \approx n_S - \alpha \left(  1 + e^\Delta  \right)  n_k + \alpha  n_{k+1}  + \alpha e^{\Delta} n_{k-1} \text{ as } N \to \infty 
         \end{align*}
This implies that 
\[
n_k (t) \approx \frac{ k n_S(t) }{\alpha (e^\Delta -1 )}    \text{ as } N - k \gg 1 \text{ and } k \gg 1. 
\]
The matching condition with the region where $k \approx N $ implies that 
\[
f(\tau, 0 ) = N  \lim_{k \to \infty } n_k \approx  \frac{ k  N n_S(t) }{\alpha (e^\Delta -1 )}\approx 0 \text{ as } N \to \infty, 
\] 
where we used the fact that $n_S \approx \frac{1}{N^2}$. 
This implies that \eqref{PDE2bound} holds. 

\section{Some models without detailed balance and lack of strong discrimination properties} \label{sec:alterantive models no db and no disc}
In this section we analyse some examples of kinetic proofreading networks that do not satisfy the property of detailed balance, and with the same amount of detailed balance $\Delta$ as the model analysed in Section \ref{sec:formal}, and that do not have strong discrimination properties.
The networks that we study have the same architecture of the linear kinetic proofreading network described in Section \ref{sec:model}.
We select the chemical rates in such a way that also for these networks equality \eqref{lack of db} holds for every cycle with $\Delta \neq 0$. In other words, in these networks the Wegscheider conditions associated to the cycles fails exactly in the same manner as in the network analysed in the previous sections. 
The only difference between the networks that we analyse in this section and the network analysed in the previous sections is the choice of the reaction in which the detailed balance property fails. In other words, the choice of the reaction whose rate is affected by the parameter $\Delta $. 

In Subsection \ref{sec:Delta to infty} we will also consider the case of $\Delta \to \infty $.
We show that if $\Delta \to \infty $, and $ \alpha \to 0 $ in such a way that $\alpha e^\Delta $ is of order one then we recover the one directional model analysed in \cite{franco2025stochastic}. 

\subsection{Detachment rate depending on $\Delta $}
We start by introducing a model in which the parameter $\Delta >0$ affects the detachment, i.e. the detachment reactions are accelerated with respect to the case in which $\Delta =0$. 
We show now that this model does not have strong discrimination properties. 

We consider the chemical reaction network generated by the following chemical reactions 
\begin{equation}\label{cycleDelta1}
S \overset{e^{- k E} }{\underset{e^{\sigma+ k \Delta }} \rightleftarrows} C_k  \overset{\alpha  }{\underset{\alpha e^E }\rightleftarrows } C_{k+1} \overset{e^{\sigma+ (1+k) \Delta }}{\underset{e^{- (k+1)E}} \rightleftarrows} S, \quad  k  \in \mathbb N. 
\end{equation}
Notice that for every $k $ the set of the reactions in \eqref{cycleDelta1} forms a cycle. Hence the circuit condition \eqref{weg condition} implies that the network satisfies the property of detailed balance if and only if $\Delta =0$. 

The system of equations associated with the network is the following 
\begin{align}\label{ODEsalt1}
    \frac{d n_S }{dt} &= - \frac{n_S}{1-e^{-E}} + e^{\sigma }  \sum_{k=0}^{\infty} n_k e^{k \Delta } \nonumber \\
    \frac{d n_0 }{dt} &= n_S  - e^{\sigma } n_0 + \alpha e^E n_1 - \alpha  n_0 \nonumber \\
        \frac{d n_k }{dt} &= n_S e^{- k E} - \left( e^{\sigma + k \Delta } + \alpha e^E + \alpha \right)  n_k + \alpha e^E n_{k+1}  + \alpha  n_{k-1} , \quad k \geq 1. 
\end{align}
We analyse the behaviour of the steady states $\tilde{n} =(\tilde{n}_S, \tilde{n}_0, \tilde{n}_1, \dots, \tilde{n}_N )$ of \eqref{ODEsalt1} as $k \to \infty $, more precisely we show that 
\begin{equation} \label{behaviour nk alt1}
\tilde{n}_k \sim C \tilde{n}_S e^{\sigma - k(E+\Delta)  } \  \text{ as } k \to \infty 
\end{equation}
for a suitable constant $C$. Notice that in this case the system does not have strong discrimination properties. 

We now indicate with heuristic arguments how the behaviour \eqref{behaviour nk alt1} can be obtained. First of all notice that we can always find a particular solution to 
\[ 
\tilde{n}^{(p)}_S e^{- k E} - \left( e^{\sigma + k \Delta } + \alpha e^E + \alpha  \right)  \tilde{n}_k^{(p)} + \alpha e^E \tilde{n}_{k+1}^{(p)}  + \alpha  \tilde{n}_{k-1}^{(p)} =0
\]
that is such that $\tilde{n}^{(p)}_S=1$ and $\tilde{n}_{0}^{(p)} = \tilde{n}_{1}^{(p)} =0$. 
We now study the homogeneous solutions corresponding to \eqref{ODEsalt1}, i.e. the solutions to the equation 
\begin{equation} \label{hom alt12}
0=  \alpha e^E \tilde{n}_{k+1}^{(h)}  + \alpha  \tilde{n}_{k-1}^{(h)} - \left( e^{\sigma + k \Delta } + \alpha e^E + \alpha  \right)  \tilde{n}_k^{(h)}, \quad k \geq 1. 
\end{equation}
We use the change of variables $\tilde{n}_k^{(h)} = e^{W_k} $ and deduce that 
\begin{equation} \label{hom alt1}
\alpha \left( e^{E+ W_{k+1} - W_k}  +e^{W_{k-1} - W_k}   \right) = e^{\sigma + k \Delta}+ \alpha + \alpha e^E, \quad k \geq 1.
\end{equation}
Notice that we have two possibilities, either $W_{k+1}- W_k \rightarrow \infty $ as $k \to \infty $, hence $ W_{k-1} - W_k\rightarrow -  \infty $ or $W_{k-1}- W_k \rightarrow \infty $ as $k \to \infty $, hence $ W_{k+1} - W_k\rightarrow -  \infty $. In the first case we have that $\alpha e^E e^{W_{k+1}- W_k } \sim e^{\sigma + k \Delta}$ as $k \to \infty $, therefore $W_k \sim \frac{k^2}{2} \Delta $ as $k \to \infty $. When instead we assume that $ W_{k-1} - W_k\rightarrow  \infty $ we deduce that $\alpha e^E e^{W_{k-1}- W_k } \sim e^{\sigma + k \Delta}$ as $k \to \infty $, hence $W_k \sim - \frac{ k^2}{2} \Delta$ as $k \to \infty $. 

Let $\psi_k^+ , \psi_k^-$ be two solutions to \eqref{hom alt12} such that $\psi_k^+ \sim e^{\frac{k^2 }{2} \Delta }$ as $k \to \infty $ and $\psi_k^- \sim e^{- \frac{k^2 }{2} \Delta }$ as $ k \to  \infty$.  
Let us define $\tilde{n}_k $ as 
\[ 
\tilde{n}_k = n_k^{(p)} \tilde{n}_S + a \psi_k^+ + b \psi_k^-, \quad k \geq 0. 
\]
Without loss of generality we can assume that $\psi_0^+= \psi_1^+=1$, so that $\psi^+_k$ is unique, and we select $a,b,\psi_0^- , \psi_1^+$ in such a way that 
\[
 a + b \psi_-^0 = \tilde{n}_0 = \frac{ \tilde{n}_S + \alpha e^E \tilde{n}_1 }{ \alpha + e^\sigma }  \text{ and } a +b\psi_-^1  =\hat{n}_1, 
\]
where 
\[
 \frac{ \tilde{n}_S  }{1-e^{-E}}  =  e^{\sigma }  \sum_{k=0}^{\infty} (n_k^{(p)} \tilde{n}_S + a \psi_k^+  )  e^{k \Delta } + b  e^{\sigma }  \sum_{k=0}^{\infty} \psi_k^-  e^{k \Delta }.
\]
This implies that $n_k^{(p)}\sim - a \frac{\psi_k^+}{\tilde{n}_S} $ as $k \to \infty $. More precisely, 
\[ 
n_k^{(p)} \tilde{n}_S + a \psi_k^+ \sim  e^{- \sigma - k (\Delta+ E) } \text{ as } k \to \infty , \quad \psi_k^- \sim e^{- \frac{k^2}{2 } \Delta  } \text{ as } k \to \infty
\]
Therefore \eqref{behaviour nk alt1} holds.

\subsection{Attachment rate depending on $\Delta $}

We consider a model in which the parameter $\Delta >0$ affects the attachment rate, hence the attachment reactions are accelerated. 
We show that this model can have strong discrimination properties under certain assumptions on the parameters. 

We study the chemical reaction network generated by the following chemical reactions 
\begin{equation}\label{cycleDelta1}
S \overset{e^{- k (E+\Delta) } }{\underset{e^{\sigma }} \rightleftarrows} C_k  \overset{\alpha  }{\underset{\alpha e^E }\rightleftarrows } C_{k+1} \overset{e^{\sigma  }}{\underset{e^{- (k+1)(E+\Delta) }} \rightleftarrows} S, \quad  k  \in \mathbb N. 
\end{equation}
The system of ODEs corresponding to this model is the following 
\begin{align}\label{ODEsalt2}
    \frac{d n_S }{dt} &= - \frac{n_S}{1-e^{-E}} + e^{\sigma }  \sum_{k=0}^{\infty} n_k e^{k (\Delta+E) } \nonumber \\
    \frac{d n_0 }{dt} &= n_S  - e^{\sigma } n_0 + \alpha e^E n_1 - \alpha  n_0 \nonumber \\
        \frac{d n_k }{dt} &= n_S e^{- k (E+\Delta) } - \left( e^{\sigma } + \alpha e^E + \alpha \right)  n_k + \alpha e^E n_{k+1}  + \alpha  n_{k-1} , \quad k \geq 1. 
\end{align}
We analyse the steady states $\tilde{n}=(\tilde{n}_S, \tilde{n}_k)_{k=0}^N $ of the system of ODEs \eqref{ODEsalt2}.
Using arguments that are analogues to the ones used in Section \ref{sec:formal} we deduce that the solutions of the form 
\[
\tilde{n}_k = B \tilde{n}_S e^{- k (E+\Delta) } + C g(\sigma)^k 
\]
where 
\[
g(\sigma) = \frac{1}{2 } \left( 1+ \frac{e^{\sigma-E}}{\alpha} + e^{-E} - \sqrt{ \left(1+ \frac{e^{\sigma-E}}{\alpha} +e^{-E } \right)^2 - 4 e^{- E }} \right) < 1 \] 
and where $B $ and $C$ are suitable constants. 
Moreover, the function $f$ defined as
\[
f(\Delta):= g(\sigma ) e^{ E+\Delta }  
\]
is such that $f(0)<1 $ while $\lim_{\Delta  \to \infty } f(\Delta ) >1$.
As a consequence there exists a $\Delta_c $ such that if $\Delta < \Delta_c $ then the model does not have strong discrimination properties while if $\Delta > \Delta_c $ the model has strong discrimination properties, i.e. the steady state is given  by $e^{- k \lambda(\sigma, E) } $ where $\lambda(\sigma, E) $ depends on $\sigma$. 

\subsection{Dephosphorylation rate depending on $\Delta $}
We assume now that $\Delta $ affects the dephosphorylation reaction. In this case we can obtain strong discrimination only if $\alpha $, the phosphorylation rate, is sufficiently large. 
Consider the chemical reaction network generated by the following chemical reactions 
\begin{equation}\label{cycleDelta1}
S \overset{e^{- k E} }{\underset{e^{\sigma }} \rightleftarrows} C_k  \overset{\alpha  }{\underset{\alpha e^{E-\Delta} }\rightleftarrows } C_{k+1} \overset{e^{\sigma  }}{\underset{e^{- (k+1)E }} \rightleftarrows} S, \quad  k  \in \mathbb N. 
\end{equation}
We formulate the system of equations 
\begin{align}\label{ODEsalt3}
    \frac{d n_S }{dt} &= - \frac{n_S}{1-e^{-E}} + e^{\sigma }  \sum_{k=0}^{\infty} n_k e^{k E } \nonumber \\
    \frac{d n_0 }{dt} &= n_S  - e^{\sigma } n_0 + \alpha e^{E- \Delta} n_1 - \alpha  n_0 \nonumber \\
        \frac{d n_k }{dt} &= n_S e^{- k E } - \left( e^{\sigma } + \alpha e^{E- \Delta} + \alpha \right)  n_k + \alpha e^{E- \Delta}  n_{k+1}  + \alpha  n_{k-1} , \quad k \geq 1. 
\end{align}
The steady states $\tilde{n}=(\tilde{n}_S, \tilde{n}_k)_{k=0}^N $ of the system of ODEs \eqref{ODEsalt3} are given by 
\[
\tilde{n}_k = B \tilde{n}_S e^{- k E } + C g(\sigma)^k 
\]
where $B, C$ are suitable constants depending on the parameters and where 
\[
g(\sigma) = \frac{1}{2 } \left[ 1+ e^{\Delta - E } + \frac{e^{\sigma + \Delta - E }}{ \alpha} - \sqrt{ \left( 1+ e^{\Delta - E } + \frac{e^{\sigma + \Delta - E }}{ \alpha} \right)^2  - 4 e^{\Delta - E }}  \right] < 1. 
\]
In this case we have that if $\frac{e^{\sigma- E }}{ \alpha (1- e^{- E } ) } <1 $, then there exists a critical $\Delta_c$  
\[
\Delta_c = \log\left(  \frac{1}{1-\frac{e^{\sigma- E }}{ \alpha (1- e^{- E } ) }  } \right)  >0 
\]
such that the asymptotic behaviour of $\tilde{n}_k$  as $k \to \infty $  depends on $\sigma $ only when $\Delta > \Delta_c$, i.e. $\tilde{n}_k \sim e^{ - k \lambda(\sigma, E) } $ as $k \to \infty $ where $\lambda (\sigma, E) $ depends on $\sigma $ only when $\Delta > \Delta_c$.

\subsection{$\Delta \to \infty $ and $\alpha e^{\Delta } \approx 1 $ }\label{sec:Delta to infty}
In this section we derive the one directional model that we analysed in \cite{franco2025stochastic} as a limit case of the model studied in this paper.

We consider the model with infinite states, i.e. we assume that the model is described by \eqref{ODE half line detail}.
We now study the steady states, that satisfy \eqref{ODE half line stationary}-\eqref{ODE half line stationary2}. As explained in Section \ref{sec:half line} the behaviour of the steady states is expected to be the following one 
\[ 
\tilde{n}_k \sim  e^{- k E } A(0) \overline n_S \left[ 1 + B \varphi(\sigma)^k \right], \text{ as } k \to \infty 
\]
where $B $ is given by \eqref{B} and where $\varphi(\sigma) $ is given by \eqref{varphi}. 
We assume that $\alpha e^\Delta =\gamma\approx 1 $, then 
\begin{align*}
\varphi(\sigma)&= \frac{1}{2 \alpha } \left( e^\sigma + \alpha (e^E+ e^\Delta) \right)  \left( 1 - \sqrt{ 1- \frac{4 \alpha^2 e^{\Delta + E }}{(e^\sigma + \alpha (e^E+ e^\Delta) )^2 } } \right) \\
& \sim   \frac{ \alpha e^{\Delta + E }}{e^\sigma + \alpha (e^E+ e^\Delta)  }  \sim e^E \frac{1}{1 + \frac{e^\sigma}{\gamma} }, \text{ as } \Delta \to \infty. 
\end{align*}

Since we have that 
\[ 
\tilde{n}_k \sim  e^{- k E } A(0) \overline n_S \left[ 1 + B \varphi(\sigma)^k \right], \text{ as } k \to \infty 
\]
we deduce that $\tilde{n}_k \sim e^{-\lambda(\sigma, E) k } $ where $\lambda(\sigma, E) = E $ if $\log\left(1+ \frac{e^\sigma}{\gamma } \right)> E  $ while $ \lambda (\sigma, E) =  \log \left(1+ \frac{e^\sigma}{\gamma} \right) $ when $\log\left(1+ \frac{e^\sigma}{\gamma} \right)  < E $. In the second case we have strong discrimination properties because $\lambda (\sigma , E) $ depends on $\sigma$.

In order to understand the condition for discrimination $\log\left(1+ \frac{e^\sigma}{\gamma} \right)  < E $, recall that 
\[ \Delta_c = \log \left(1+ \frac{e^\sigma }{ \alpha (e^E-1) } \right) .
\]
Assume now that $\Delta \to \infty $ and that $\Delta - \Delta_c = x >0 $. Under this assumption, we expect the limiting model to have strong discrimination properties. 
This assumption implies that the parameters satisfy the assumption $\log\left(1+ \frac{e^\sigma}{\gamma} \right)  < E $, indeed
\[ 
1 < e^{x } = e^{ - \Delta} + \frac{e^{\sigma-\Delta} }{ \alpha (e^E-1) } \sim   \frac{e^{\sigma} }{ \gamma (e^E-1) } \text{ as } \Delta \to \infty. 
\]

In the model that we consider in \cite{franco2025stochastic} we have that $E=\infty$. Indeed in that paper we assume that when the ligands attach to the receptors then they form a complex that has always state $0. $
In that case is always true that $ \log \left( 1+ \frac{e^\sigma}{\gamma} \right)  < E=  \infty$, hence we always have discrimination. 

\section{Conclusions and open problems} 
In this paper we prove that a minimal amount of lack of detailed balance is necessary in order to have strong discrimination properties. In other words we prove that the kinetic proofreading systems must be out of equilibrium systems. This absence of equilibrium can be measured in terms of the frozen concentrations of some substances, like ATP or ADP, that must be sufficiently far from equilibrium. 

Given that the lack of detailed balance is a property on the chemical rates of the reactions that belong to the cycles, one could think that similar amounts of lack of detailed balance would result in a similar behaviour of the network. We prove that this is not the case.  We give examples of models that have identical amount of lack of detailed balance  and that do not have the same discrimination properties.

We also have shown that, in some parameters regimes, the chemical network studied in this paper can be approximated by means of PDEs for which the mathematical analysis of the discrimination properties is more amenable. 

It could be interesting to study the behaviour of the network when the lack of detailed balance $\Delta_k$ in the phosphorylation reaction $(k) \rightarrow (k+1)$ depends on $k$. 
It would be relevant to understand if also in this case a critical amount of detailed balance is needed in order to have strong discrimination. 

We could also consider more complex models in which the phosphorylation reaction is modelled in more detail. In particular we can assume that phosphorylation is a chain of reactions that involves $ATP$, $ADP $ and phosphate groups, but also other substances that are out of equilibrium, for instance some enzymes. It would be relevant to understand if including these substances in the system, hence changing the topology of the network, can improve the discrimination properties of the chemical network. 

In this paper we analyse in detail the discrimination property of the kinetic proofreading mechanism and its relation with detailed balance.
Understanding if a biological function can be achieved by passive systems (with detailed balance) or if it requires active processes is a relevant question for several biological systems.
For instance, in \cite{franco2025adaptation} we analyse whether the property of adaptation, that takes place in many relevant signalling systems, can be achieved or not by equilibrium chemical reaction networks. 

\bigskip 

\bigskip 

\textbf{Acknowledgements} The authors gratefully acknowledge the support by the Deutsche Forschungsgemeinschaft (DFG) through the collaborative research centre "The mathematics of emerging effects" (CRC 1060, Project-ID 211504053) and Germany's Excellence Strategy EXC2047/1-390685813.
The funders had no role in study design, analysis, decision to publish, or preparation of the manuscript.

\bigskip 

\bigskip

\textbf{E. Franco}: Institute for Applied Mathematics, University of Bonn,

\hspace{0.5 cm} Endenicher Allee 60, D-53115 Bonn, Germany

\hspace{0.5 cm} E-mail: franco@iam.uni-bonn.de

\hspace{0.5 cm} ORCID 0000-0002-5311-2124

\bigskip 

\textbf{J. J. L. Vel\'azquez}: Institute for Applied Mathematics, University of Bonn,

\hspace{0.5 cm} Endenicher Allee 60, D-53115 Bonn, Germany

\hspace{0.5 cm} E-mail: velazquez@iam.uni-bonn.de

\bibliographystyle{siam}

\bibliography{References}

\end{document}